
\documentclass[12pt]{amsart}
\usepackage{eurosym}
\usepackage{amsmath}
\usepackage{amsfonts}
\usepackage{graphicx}
\usepackage{caption}
\usepackage{subcaption}
\usepackage{verbatim}
\usepackage{fullpage}

\usepackage[dvipsnames]{xcolor}
\newcommand*\gf[1]{\textcolor{RoyalBlue}{\textbf{*Gary: #1*}}}

\setcounter{MaxMatrixCols}{10}

\newtheorem{theorem}{Theorem}

\newtheorem{corollary}[theorem]{Corollary}

\newtheorem{definition}[theorem]{Definition}

\newtheorem{lemma}[theorem]{Lemma}

\newtheorem{proposition}[theorem]{Proposition}

\begin{document}
\title{Optimal linear response for expanding circle maps}
\author{Gary Froyland$^{1}$}
\email{$^{1}$ g.froyland@unsw.edu.au}
\address{School of Mathematics and Statistics, UNSW Sydney, Sydney NSW 2031, Australia}
\author{Stefano Galatolo$^{2}$}
\email{$^{2}$ stefano.galatolo@unipi.it}
\address{Dipartimento di Matematica, Universita di Pisa, Via \ Buonarroti
1,Pisa - Italy}

\begin{abstract}
We consider the problem of optimal linear response for deterministic expanding maps of the circle. 
To each infinitesimal perturbation $\dot{T}$ of a circle map $T$ we consider (i) the response of the expectation of an observation function and (ii) the response of isolated spectral points of the transfer operator of $T$.
In each case, under mild conditions on the set of feasible perturbations $\dot{T}$ we show there is a unique optimal feasible infinitesimal perturbation $\dot{T}_{\rm optimal}$, maximising the increase of the  expectation of the given observation function or maximising the increase of the spectral gap of the transfer operator associated to the system. We derive expressions for the unique maximiser $\dot{T}_{\rm optimal}$ in terms of its Fourier coefficients. We also devise a Fourier-based computational scheme and apply it to illustrate our theory.


\end{abstract}

\maketitle

\section{Introduction}
A $C^3$ 
uniformly expanding map of the circle $T:S^1\to S^1$ is well known to display a linear response to its unique invariant density $f_0$ .
That is, differentiable changes to the map $T$ lead to differentiable changes in $f_0$ (see \cite{Ba2} for a survey on the subject).
Classically, linear response is often phrased as the differentiability of the expectation $\int_{S^1} c(x)\cdot f_0(x)\ dx$ of an observable $c:S^1\to\mathbb{R}$.
Each infinitesimal perturbation $\dot{T}\in C^3$ of $T$ leads to an infinitesimal perturbation $R(\dot{T})$ of $f_0$.
If $\dot{T}$ is constrained in some meaningful way, for a particular observation $c$, it is natural to ask whether there is a perturbation $\dot{T}$ that maximises $\int_{S^1} c(x)\cdot R(\dot{T})\ dx$, and if so, whether such a $\dot{T}$ is unique.
For $c\in L^\infty$ we show that there is in fact a unique maximiser under mild conditions on the feasible set of perturbations.
When the perturbations $\dot{T}$ are norm-constrained, e.g.\ by a Sobolev norm $\|\dot{T}\|_{H^4}$ we derive a relatively explicit formula for the unique maximiser $\dot{T}$ in terms of Fourier coefficients. 

We pose a similar question for the effect of perturbations on the isolated spectrum of the transfer operator $L$ of $T$. 
Perturbations $\dot{T}$ of $T$ lead to perturbations of $L$, which in turn lead to perturbations of the isolated spectrum and associated eigenprojections.
If $\dot{T}$ is constrained in a meaningful way, it is natural to ask whether there is a $\dot{T}$ that maximises the rate of change the magnitude of an isolated spectrum point $\lambda_0$.
If the isolated spectral point $\lambda_0$ is the largest-magnitude spectral point inside the unit circle, it controls the exponential rate of mixing of the system.
Therefore one can phrase this spectral optimisation question as ``does there exist a perturbation $\dot{T}$ that maximises the infinitesimal change $\dot{\lambda}$ in the mixing rate?''.
In order to answer such quantitative questions, we derive an expression for $\dot{\lambda}$ (the derivative of $\lambda$ with respect to the perturbation $\dot{T}$) in terms of $\dot{T}$. 
Under mild conditions on the feasible set of perturbations we show that there is a unique maximiser $\dot{T}$ and when this feasible set is norm constrained in e.g. $H^4$, we construct an explicit formula for the optimal $\dot{T}$ in terms of its Fourier coefficients.

To numerically estimate the unique perturbation $\dot{T}$ that maximises the expected response of a given observable $c$, we devise a Fourier-based numerical scheme.
This scheme estimates the transfer operator $L$, the action of the resolvent $(I-L)^{-1}$, and all other derivatives and integrals involved in computing the Fourier coefficients of the unique maximiser $\dot{T}$.
To numerically estimate the unique perturbation maximally affecting the mixing rate of the dynamics, we use a related Fourier scheme.
In addition to estimating the transfer operator $L_0$ and its outer spectrum when acting on $W^{1,1}(S^1)$, we also numerically approximate the eigenvector $v_0$ corresponding to the selected isolated spectral value $\lambda_0$, and a representative $\phi_0\in H^1$ of the  corresponding adjoint eigenfunctional $\varphi_0\in (W^{1,1}(S^1))^*$ of $L_0^*$ acting on $(W^{1,1}(S^1))^*$.
Each of the above terms are crucial pieces of the quantitative expression for the objective function we optimise.

Our theory and numerics are illustrated in two examples.
In the first example we consider a circle map $T$ with a slightly ``sticky'' (derivative near to 1) fixed point at $x=0$.
We show that if the observation $c$ takes large values at $x=0$, the perturbation $\dot{T}$ retains the fixed point at $x=0$ and further reduces the derivative, making it more sticky.
This increases the proportion of time that orbits spend near $x=0$ and increases the expectation of the observable.
We then show that if the observation function $c$ takes on its maximal value away from $x=0$, the optimal perturbation $\dot{T}$ sharply moves the fixed point from $x=0$ in an attempt to weight the invariant density toward larger values of $c$.

In the second example we construct a circle map with a positive isolated spectral value larger than $1/\inf_x|T'(x)|$.
The presence of the relatively large isolated spectral value is due to almost-invariant intervals to the right of $x=0$ and to the left of $x=1$.
We show that the perturbation $\dot{T}$ that maximally slows the mixing rate (maximises $\dot\lambda$) appears to try to strengthen this almost invariance.

Although the questions posed in this paper are inspired by \cite{ADF} and \cite{AFG}, which considered related linear response optimisation for finite-state Markov chains and Hilbert-Schmidt operators, respectively, the deterministic case treated here required a  redevelopment of the optimisation approach, and an entirely distinct and more challenging perturbation theory. 
In particular, a certain amount of technical work was needed  to get explicit formulas for the response of the isolated spectrum,  see Appendix \ref{app}.
The question of optimising the outer spectrum (and therefore the mixing rate) has been addressed for flows in the presence of small noise, when the underlying vector field is periodically \cite{FS} and aperiodically \cite{FKS} driven.

Other related works consider the  problem of finding the   infinitesimal perturbation  achieving a prescribed \emph{desired} response direction (in the case of many infinitesimal perturbations achieving the same response one again looks for an optimal one). This problem was also called the ``linear request problem''  and studied in \cite{Kl,GP,GG} from a theoretical point for some classes of deterministic and random maps, and in applications in cellular automata \cite{MK} and climate \cite{LB}.
In particular the work \cite{GP} considers the case of expanding maps and the problem of finding a minimum-norm infinitesimal perturbation resulting in a given response for the invariant density of the system.

An outline of the paper is as follows.
In Section 2 we formalise the class of dynamical systems and perturbations we consider.
Proposition \ref{Prop:LRmain} summarises the fundamental theory concerning differentiability of invariant densities and spectral points for expanding maps.
Section 3 recaps relevant results from convex optimisation.
Section 4 formally sets up the optimisation problem to maximise the rate of change of the expectation of an observation function $c$. 
Proposition \ref{expect-resp-cts} states that there is a unique maximiser $\dot{T}$ and Theorem \ref{mainthm1} provides explicit expressions for the Fourier coefficients of the optimal $\dot{T}$.
Section 5 sets up the response maximisation problem for isolated spectral values $\lambda$.  Proposition \ref{Jcont} verifies there is a unique optimal perturbation $\dot{T}$ and Theorem \ref{mainthm2} states explicit formulae for the Fourier coefficients of the optimal $\dot{T}$ and the optimal value of the corresponding response of $\lambda$.
Section 6 describes our computational approach to estimate all of the relevant objects required to numerically solve the two main optimisation problems, and Section 7 illustrates our theory and numerics through two examples.
Finally, in the Appendix, we recall some known results about linear response of invariant measures and resolvent operators, and from these facts we derive general response formulas for invariant measures and isolated eigenvalues we use in the paper.


\section{Linear response of invariant densities and isolated eigenvalues \label{secmap}}

In this section we consider the response of the physical measure of an expanding circle map, and of the leading eigenvalues of the associated transfer operator, to deterministic perturbations of the map.
We begin by setting up the class of dynamical systems we will consider.
Proposition \ref{Prop:recall} then recalls known continuity properties for the physical measure and isolated spectrum under deterministic perturbations of the map, and Proposition \ref{Prop:LRmain} states the corresponding linear response results.
Proposition \ref{Prop:LRmain} also contains explicit formulas for the response of these objects under suitable deterministic perturbations of the system. 
These explicit formulas will be used in the computation of the optimal (response-maximising) perturbation.

Several linear response results in the literature treat perturbations that compose the dynamics with a diffeomorphism near to the identity, e.g.\ \cite{Ba2}.
In this paper we consider additive perturbations applied directly to the map, as we believe this is natural for applications.

Let us consider  some $\bar\delta>0$  and a family of $C^3$ maps  $\{T_{\delta}:S^{1}\rightarrow S^{1}\}_{\delta \in [0,\bar{\delta})}$
satisfying the following assumptions:
\begin{enumerate}
\item[(A0)] $T_0 \in C^4(S^1).$ 

\item[(A1)] There is $ \alpha<  1$ such that $|T_{0}^{\prime }(x)|\geq \alpha
^{-1}$ for all $x\in S^{1}$.

\item[(A2)]  The dependence of the family $T_\delta$ on $\delta $ is differentiable at $\delta=0$
in the following strong sense:
\begin{equation}\label{type}
T_{\delta }(x)=T_{0}(x)+\delta \dot{T} (x)+O_{C^{3}}(\delta^2)
\end{equation}%
where $\dot{T} \in C^{3}(S^{1},S^1)$ and we say
 a family of functions $F_\delta\in C^3(S^1,S^1)$ is $O_{C^{3}}(\delta^2)$
if
\begin{equation}\label{not492}
\underset{\delta
\in (0,\overline{\delta})}{\sup}\frac{||F_\delta||_{C^{3}}}{\delta ^{2}}<\infty.
\end{equation}

\end{enumerate}
We  study the statistical properties of these map
perturbations through their associated transfer operators. 
It is well known that if we consider the action of $L_{\delta }$ on a suitable Sobolev space, this operator is quasi-compact (see e.g.\ \cite{L2}).
We denote the Sobolev space of functions having weak $k$th derivatives in $L^{p}$ by $W^{k,p}$.
We recall
the definition of the transfer operator associated to a map of
the circle and of the derivative operator associated to perturbations as in (A2).
\begin{definition}\label{1}
The transfer operator $L_{{\delta }}:W^{1,1}(S^1,\mathbb{C})\rightarrow
W^{1,1}(S^1,\mathbb{C})$ associated to an expanding map $T_{\delta
}:S^{1}\rightarrow S^{1}$ is defined by
\begin{equation*}
L_{{\delta }}f(x)=\sum_{y\in T^{-1}_\delta x}\frac{f(y)}{T_{\delta
}^{\prime }(y)}
\end{equation*}%
for each $f\in W^{1,1}(S^{1},\mathbb{C}).$
The derivative operator $\dot{L}:W^{1,1}(S^1,\mathbb{C})\rightarrow L^1(S^1,\mathbb{C})$ associated
to a map perturbation in the direction $\dot{T}\in C^3(S^1,S^1)$ is defined as 
\begin{equation}
\dot{L}(f):=-L_0\left(\left[\frac{f \dot{T}}{T_0'}\right]'\right).
\label{der}
\end{equation}
\end{definition}
For the moment, we simply call $\dot L$ the ``derivative operator'';  in Appendix \ref{9.4.2} 
we show that $\dot L$ indeed arises as a derivative of the family of operators $L_\delta$ at $\delta=0$.
We denote by $f_\delta$ an invariant probability density for $T_{\delta }$.
Such densities are fixed points of the operators ${L}_{{\delta }} $;  that is, ${L}_{\delta }f_{\delta}=f_{\delta}$.
It is well known that an expanding map has a unique invariant probability density which is absolutely continuous with respect to the Lebesgue measure on $S^1$ (see e.g. \cite{KS,L2}).

We now recall a series of facts about stability  (Proposition \ref{Prop:recall}) 
 and linear response (Proposition \ref{Prop:LRmain}) of the invariant density   and isolated spectrum of expanding maps when subjected to deterministic perturbations.
The following proposition is a well-known fact about the stability of the spectral picture of exanding maps, which can be obtained by the results of
\cite{KL} (see also \cite{L2} for details).

\begin{proposition} \label{Prop:recall}
Consider a family of maps $T_\delta:S^1\to S^1$ for $\delta \in [0,\overline{\delta})$ satisfying (A0), (A1) and (A2).
Let $L_{\delta }:W^{1,1}(S^1,\mathbb{C})\rightarrow W^{1,1}(S^1,\mathbb{C})$ be the transfer operators associated  to $T_{\delta }$.
\begin{enumerate}
\item[(I)] There is a unique invariant probability density $f_\delta\in W^{2,1}(S^1,\mathbb{R})$ of $T_{\delta}$; 
 furthermore $||f_\delta-f_0||_{W^{1,1}}\to 0$ as $\delta \to 0$.
\item[(II)]
If $\lambda _{0}\in\mathbb{C}$ is  an
eigenvalue of $L_{0}$ acting on $W^{1,1}(S^1,\mathbb{C})$ such that $\alpha<|\lambda _{0}|<1$,  then $\lambda_0$ is isolated.
Furthermore, if $\lambda_0$ is simple, there is $\delta _{\ast }>0$ such that for  $\delta
\in [0,\delta _{\ast })$, $L_\delta$ has a simple isolated eigenvalue $\lambda_\delta$ and the map $\delta \mapsto \lambda _{\delta }$ is continuous.
\end{enumerate}
\end{proposition}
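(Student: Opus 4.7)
The plan is to reduce both claims to the Keller--Liverani perturbation theorem \cite{KL} applied to the family $\{L_\delta\}$ acting on the pair of spaces $(W^{1,1}(S^1,\mathbb{C}), L^1(S^1,\mathbb{C}))$, with $W^{1,1}$ playing the role of the strong norm and $L^1$ the weak norm. The two hypotheses of Keller--Liverani that I need to verify are: (a) a uniform Lasota--Yorke inequality of the form $\|L_\delta^n f\|_{W^{1,1}} \le C\rho^n\|f\|_{W^{1,1}} + M\|f\|_{L^1}$ with constants independent of $\delta\in[0,\bar\delta)$ and some $\rho<1$; and (b) the weak-norm closeness $\|L_\delta-L_0\|_{W^{1,1}\to L^1}\to 0$ as $\delta\to 0$.

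For (a), assumption (A1) and the $C^3$ closeness in (A2) imply that, after shrinking $\bar\delta$ if necessary, $|T_\delta'(x)|\ge (\alpha')^{-1}$ with a uniform $\alpha'<1$, and that $\|T_\delta\|_{C^3}$ is uniformly bounded. A direct computation with $L_\delta f(x)=\sum_{T_\delta y=x}f(y)/T_\delta'(y)$, differentiating once and estimating in $L^1$, yields the standard one-step Lasota--Yorke inequality $\|L_\delta f\|_{W^{1,1}}\le \alpha'\|f\|_{W^{1,1}} + K\|f\|_{L^1}$ with $K$ depending only on uniform $C^2$ bounds of $T_\delta$; iterating gives (a). For (b), the explicit formula in Definition \ref{1} together with the mean value inequality $\|L_\delta f - L_0 f\|_{L^1}\le C\|T_\delta-T_0\|_{C^1}\|f\|_{W^{1,1}}$ (a routine change-of-variables estimate that trades one derivative of $f$ for one derivative of $T_\delta-T_0$) and (A2) gives $\|L_\delta-L_0\|_{W^{1,1}\to L^1}=O(\delta)$.

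With (a) and (b) in hand, Keller--Liverani gives that the spectral picture of $L_0$ outside any disk of radius strictly larger than the essential spectral radius (which is bounded by $\alpha$ via Hennion/Nussbaum applied to the LY inequality) is stable: isolated eigenvalues of $L_0$ persist as isolated eigenvalues of $L_\delta$, simplicity is preserved, and the eigenvalues and spectral projectors depend continuously on $\delta$. This immediately yields statement (II), since any $\lambda_0$ with $\alpha<|\lambda_0|<1$ lies in the isolated part of the spectrum. It also gives statement (I) for the fixed eigenvalue $\lambda=1$: the spectral projector $P_\delta$ (onto the $1$-eigenspace) converges to $P_0$ in the $W^{1,1}\to L^1$ norm, so $f_\delta\to f_0$ in $L^1$. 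To promote this to $W^{1,1}$ convergence, I would use the uniform LY inequality iterated on the fixed points $f_\delta$ to obtain a uniform $W^{1,1}$ bound; applying LY once more on a slightly stronger pair (using $C^4$ regularity from (A0), which together with (A2) gives uniform $C^3$ bounds in a neighbourhood of $\delta=0$) yields a uniform $W^{2,1}$ bound on $f_\delta$. Since $W^{2,1}(S^1)\hookrightarrow W^{1,1}(S^1)$ compactly (Rellich on the circle), and the $L^1$ limit is unique, the whole family converges in $W^{1,1}$.

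The main obstacle is verifying the weak-norm closeness (b) cleanly, because the perturbation $L_\delta-L_0$ is a composition-type object, and the obvious pointwise estimate loses a derivative. The trick is exactly to bound in $L^1$ (not $W^{1,1}$) and to absorb the lost derivative into the $W^{1,1}$ norm of the input $f$; this is why the pair $(W^{1,1},L^1)$ is chosen rather than, say, $(W^{2,1},W^{1,1})$. Everything else is either a direct application of Keller--Liverani or a compactness argument.
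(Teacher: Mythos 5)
Your proposal is correct and follows essentially the same route as the paper, which proves Proposition \ref{Prop:recall} by citing the Keller--Liverani stability theorem and verifying its hypotheses exactly as you do: the uniform Lasota--Yorke inequality (the paper's Lemma \ref{Lemsu}) and the $O(\delta)$ bound on $\|L_\delta-L_0\|_{W^{1,1}\to L^1}$ (the paper's Proposition \ref{prop14}), with the $W^{2,1}$ regularity and compact-embedding upgrade handled as in the paper's Lemma \ref{regv0} and assumption (GL7). The only point to tighten is that one cannot apply the $W^{2,1}$ Lasota--Yorke inequality directly to $f_\delta$ before knowing $f_\delta\in W^{2,1}$; the standard fix (bounding Ces\`aro averages of a smooth density, or invoking the coincidence of point spectra on nested spaces as in Lemma A.3 of \cite{Ba3}) is routine.
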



Linear response of the invariant measure of expanding maps under deterministic perturbations is a classical result (see \cite{Ba2}). A response formula for deterministic additive perturbations (see \eqref{type}) similar to \eqref{linr} was given in \cite{GP}.
Differentiability of the isolated eigenvalues of the transfer operator associated to expanding maps under deterministic perturbations is due to \cite{GL06} (see also \cite{Ba3}).
Since we need an explicit formula for this derivative, in Proposition \ref{Prop:LRmain} we also provide such a formula (see \eqref{lineig}).
To the best of our knowledge, the formula (\ref{lineig}) is new in the deterministic setting.
It mirrors the expression in Corollary III.11 \cite{HH}, which in our notation applies to a  family of quasi-compact operators $L_\delta$ that are continuously differentiable with respect to $\delta$ in operator norm.
The operator perturbations induced by a differentiable family $T_\delta$ of maps satisfying (A0)--(A2) do not fit into this framework and require a more careful treatment, this will be done using  theory developed in \cite{HH} and \cite{GL06}, as laid out in the Appendix.
\begin{proposition}
\label{Prop:LRmain}
Consider a family of maps $T_\delta:S^1\to S^1$ for $\delta \in (0,\overline{\delta})$  satisfying (A0), (A1) and (A2) as above.
\begin{enumerate}
\item[(I)] The following linear response formula for the invariant measure $f_\delta$ holds:
\begin{equation} \lim_{\delta \rightarrow 0}\left\Vert \frac{f_{\delta }-f_{0}}{\delta }+(Id-L_{0})^{-1}L_0\left(\left[\frac{f_0 \dot{T}}{T_0'}\right]'\right)\right\Vert _{L^{1}}=0.  \label{linr}
\end{equation}
\item[(II)]
Let $\lambda _{0}\in\mathbb{C}$ be  a simple, isolated eigenvalue of $L_{0}$ acting on $W^{1,1}(S^1,\mathbb{C})$ such that $\alpha<|\lambda _{0}|<1$.
Let $\varphi _{0}\in (W^{1,1}(S^1,\mathbb{C}))^*$ be the eigenfunction
of the
adjoint operator $L_{0}^{\ast }:(W^{1,1}(S^1,\mathbb{C}))^*\rightarrow (W^{1,1}(S^1,\mathbb{C}))^*$
associated to $\lambda _{0}$ i.e.\ $L_{0}^{\ast }\varphi _{0}=\lambda
_{0}\varphi _{0}$, where $\varphi _{0}$ is scaled so that $\varphi _{0}(%
\mathbf{1})=1$.
Let $v_{0}\in W^{1,1}(S^1,\mathbb{C})$ be the eigenfunction of $%
L_{0 }$ associated to $\lambda_{0 }$ scaled so that $\varphi
_{0}(v_{0 })=1$ (in fact $v_0\in W^{3,1}(S^1,\mathbb{C})$).
Furthermore, consider the continuous map $\delta \mapsto \lambda_{\delta }$ as in (II) of Proposition \ref{Prop:recall}. This map is  differentiable at $0$ and
\begin{equation}
\frac{d\lambda_{\delta }}{d\delta }|_{\delta =0}=\varphi _{0}(\dot{L}(
v_{0})) .  \label{lineig}
\end{equation}
\end{enumerate}
\end{proposition}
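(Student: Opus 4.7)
The plan is to differentiate the fixed-point relations $L_\delta f_\delta=f_\delta$ and $L_\delta v_\delta=\lambda_\delta v_\delta$ at $\delta=0$. Because the family $L_\delta$ is not continuously differentiable in any single Banach operator norm, one has to work in the two-space framework of \cite{KL,GL06} using the pair $(W^{1,1},L^1)$. The preparatory step, which I would relegate to the Appendix, is to establish the expansion
\begin{equation*}
L_\delta=L_0+\delta\dot L+O(\delta^2)\quad\text{in the norm }\|\cdot\|_{W^{1,1}\to L^1},
\end{equation*}
together with uniform Lasota--Yorke bounds on $L_\delta$ in $W^{1,1}$. These two ingredients feed the Keller--Liverani spectral-stability machinery and give (a) quasi-compactness of each $L_\delta$, (b) continuity of the simple isolated eigenvalue $\lambda_\delta$ and of the associated eigenprojection, and (c) the Lipschitz resolvent estimates that underpin everything else.

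For part (I), I would subtract $(I-L_0)f_0=0$ from $(I-L_\delta)f_\delta=0$ to obtain $(I-L_0)(f_\delta-f_0)=(L_\delta-L_0)f_\delta$. The spectral gap of $L_0$ on $W^{1,1}$ makes $I-L_0$ boundedly invertible on $V_0=\{h\in W^{1,1}:\int h\,dx=0\}$, and both sides lie in $V_0$ since every $L_\delta$ preserves Lebesgue integrals. Dividing by $\delta$ and combining the expansion above with the continuity $\|f_\delta-f_0\|_{W^{1,1}}\to 0$ supplied by Proposition \ref{Prop:recall} yields $(L_\delta-L_0)f_\delta/\delta\to\dot L f_0$ in $L^1$, from which \eqref{linr} follows after applying $(I-L_0)^{-1}$. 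The $L^1$ mode of convergence is the sharpest one can expect, since $\dot L f_0=-L_0([f_0\dot T/T_0']')$ lives only in $L^1$ in general.

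For part (II), the starting point is the one-line identity
\begin{equation*}
(\lambda_\delta-\lambda_0)\,\varphi_0(v_\delta)=\varphi_0\bigl((L_\delta-L_0)v_\delta\bigr),
\end{equation*}
obtained by applying $\varphi_0$ to $L_\delta v_\delta=\lambda_\delta v_\delta$ and invoking $L_0^*\varphi_0=\lambda_0\varphi_0$. Stability of the eigenprojection gives $v_\delta\to v_0$ in $W^{1,1}$ and hence $\varphi_0(v_\delta)\to 1$. Splitting $(L_\delta-L_0)v_\delta=(L_\delta-L_0)v_0+(L_\delta-L_0)(v_\delta-v_0)$, the second piece divided by $\delta$ tends to zero in $L^1$ thanks to the operator-norm estimate $\|L_\delta-L_0\|_{W^{1,1}\to L^1}=O(\delta)$. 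The first piece is expanded on the smoother eigenvector $v_0\in W^{3,1}$ (which is obtained by bootstrapping $v_0=\lambda_0^{-1}L_0v_0$ on the scale $W^{k,1}$, $k=1,2,3$, each of which is preserved by $L_0$ thanks to $T_0\in C^4$), and that extra regularity places $\dot L v_0$ back in $W^{1,1}$, so $\varphi_0\in(W^{1,1})^*$ may legitimately be applied in the limit, giving \eqref{lineig}.

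The main obstacle is exactly the unboundedness of $\dot L$ in any single Sobolev norm, which rules out a verbatim application of the Hennion--Herv\'e Corollary III.11 formula. The technical core of the proof is therefore (i) verifying the two-space expansion with the sharp $O(\delta^2)$ remainder rather than only $o(\delta)$, and (ii) upgrading the expansion to converge in $W^{1,1}$ (rather than only $L^1$) when tested against the regular inputs $f_0$ and $v_0$, so that the dual objects $(I-L_0)^{-1}$ and $\varphi_0$ can be evaluated on the limits. Both are carried out in the Appendix along the lines of \cite{HH,GL06}.
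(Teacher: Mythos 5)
Your overall strategy (differentiate the fixed-point and eigenvalue relations, work in the Keller--Liverani/Gou\"ezel--Liverani two-space framework, and exploit the extra regularity of $f_0$ and $v_0$) matches the paper's, but at the two places where the paper's proof does its hardest work your argument has genuine holes. In Part (I) you rearrange to $(I-L_0)(f_\delta-f_0)=(L_\delta-L_0)f_\delta$, show the right-hand side over $\delta$ converges \emph{in $L^1$} to $\dot L f_0$, and then ``apply $(I-L_0)^{-1}$''. But for a deterministic expanding map $(I-L_0)^{-1}$ is bounded only on mean-zero $W^{1,1}$ (as a map $V_1\to V_0$); it is not bounded on mean-zero $L^1$, since $L_0$ has no spectral gap on $L^1$ (its essential spectral radius there is $1$). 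So $L^1$-convergence of the right-hand side does not transfer through the resolvent, and your remark that ``$L^1$ is the sharpest one can expect'' forecloses the needed upgrade. The paper avoids this by writing $(Id-L_\delta)\frac{f_\delta-f_0}{\delta}=\frac{(L_\delta-L_0)f_0}{\delta}$ (note $f_0$, not $f_\delta$, on the right), proving that the right-hand side converges in $W^{1,1}$ (Proposition \ref{mainprop}, using $f_0\in C^2$), and inverting $(Id-L_\delta)$ with the Keller--Liverani resolvent stability $\|(Id-L_\delta)^{-1}-(Id-L_0)^{-1}\|_{V_1\to V_0}\to 0$ (Proposition \ref{resstab}). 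If you insist on keeping $(I-L_0)$ on the left you must instead control $(L_\delta-L_0)(f_\delta-f_0)/\delta$ in $W^{1,1}$, which via \eqref{22} needs $\|f_\delta-f_0\|_{W^{2,1}}\to 0$ --- an extra stability statement you never establish. Relatedly, the expansion $L_\delta=L_0+\delta\dot L+O(\delta^2)$ holds in $\|\cdot\|_{W^{2,1}\to L^1}$ (this is (GL6)), not in $\|\cdot\|_{W^{1,1}\to L^1}$ as you claim, since the remainder estimate consumes two derivatives of the input.

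The same issue is fatal as written in Part (II). From $(\lambda_\delta-\lambda_0)\varphi_0(v_\delta)=\varphi_0\bigl((L_\delta-L_0)v_\delta\bigr)$ you split off $(L_\delta-L_0)(v_\delta-v_0)$ and argue it is $o(\delta)$ \emph{in $L^1$}. That does not control $\varphi_0$ applied to it: $\varphi_0\in(W^{1,1})^*$ is a genuinely first-order functional (cf.\ Lemma \ref{duallemma}) and is not $L^1$-continuous, so $L^1$-smallness of the argument says nothing about the value of $\varphi_0$ on it. This is exactly the obstruction that the paper's Proposition \ref{lineig1} is built to overcome: one writes $\varphi_0(h)=\lambda_0^{-n}\varphi_0(L_0^nh)$ and uses the Lasota--Yorke inequality $\|L_0^nh\|_{s}\le C\alpha^n\|h\|_{s}+C\|h\|_{w}$ to trade the weak-norm smallness of $h=[L_\delta-L_0](v_\delta-v_0)$ for strong-norm control, first choosing $n$ large and then $\delta$ small; this in turn requires \emph{uniform $W^{2,1}$ bounds on $v_\delta$}, which is why the three-space scale $W^{2,1}\subset W^{1,1}\subset L^1$ and the uniform Lasota--Yorke estimates at the upper level are indispensable. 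None of this appears in your proposal. (A smaller point: the regularity $v_0\in W^{3,1}$ does not follow from the naive bootstrap $v_0=\lambda_0^{-1}L_0v_0$, since $L_0$ preserving $W^{k,1}$ does not mean it improves regularity; the paper invokes the Baladi--Tsujii-type lemma that simple eigenvalues beyond the essential spectral radius on nested spaces have coinciding eigenspaces contained in the smaller space.)
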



The proof of Proposition \ref{Prop:LRmain} is postponed to Appendix \ref{endofapp}.

\section{Recap of convex optimisation}
\label{opt}

We will consider a set $P\subset C^3(S^1,\mathbb{R})$ of allowed infinitesimal perturbations $\dot{T}$ to the map $T_0$.
We are interested in selecting an \emph{optimal} perturbation $\dot{T}$ in terms of (i) maximising the rate of change of the expectation of a chosen observable, and (ii) maximising the rate of change in the magnitude of an isolated eigenvalue.
Because we wish to perform an optimisation we consider $P$ inside some Hilbert space $\mathcal{H}$.
We will also assume that $P$ is bounded and convex, which we believe are natural hypotheses;  convexity because if two different
perturbations of a system are possible, then their convex combination --
applying the two perturbations with different intensities -- should also be
possible.
\begin{definition}
\label{stconv}We say that a convex closed set $P\subseteq \mathcal{H}$ is
\emph{strictly convex} if for each pair $x,y\in P$ and for all $0<\gamma<1$,
the points $\gamma x+(1-\gamma)y\in \mathrm{int}(P)$, where the relative
interior\footnote{%
The relative interior of a closed convex set $C$ is the interior of $C$
relative to the closed affine hull of $C$, see e.g.\ \cite{borwein}.} is
meant.
\end{definition}

We briefly recall some relevant results from convex optimisation.
Suppose $\mathcal{H}$ is a separable Hilbert space and $P\subset
\mathcal{H}$.
Let $\mathcal{J}:\mathcal{H}\rightarrow {\mathbb{R}}$ be a continuous
linear function.
Consider the abstract problem to find  $p_*\in P$ such that
\begin{equation}
\mathcal{J}(p_*)=\max_{p\in P}\mathcal{J}(p),  \label{gen-func-opt-prob}.
\end{equation}%
The existence and uniqueness of
an optimal perturbation follows from properties of $P$.
\begin{proposition}[Existence of the optimal solution]
\label{prop:exist} Let $P$ be bounded, convex, and closed in $\mathcal{H}$.
Then problem \eqref{gen-func-opt-prob} has at least one solution.
\end{proposition}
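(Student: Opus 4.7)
The plan is to use the standard weak-compactness argument for maximising a continuous linear functional over a bounded, convex, closed set in a Hilbert (in fact, any reflexive Banach) space. The heart of the argument is that although $P$ need not be norm-compact, it becomes weakly compact, and $\mathcal{J}$ being continuous and linear is automatically weakly continuous; hence a maximiser exists by the extreme-value theorem in the weak topology.

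Concretely, I would first choose a maximising sequence $(p_n)_{n\in\mathbb{N}} \subset P$ with $\mathcal{J}(p_n) \to M := \sup_{p\in P}\mathcal{J}(p)$. The supremum $M$ is finite, since $P$ is bounded and $\mathcal{J}$ is continuous linear, so $|\mathcal{J}(p)| \le \|\mathcal{J}\|_{\mathcal{H}^*}\,\|p\|_{\mathcal{H}}$ is uniformly bounded on $P$. Next, because $\mathcal{H}$ is a separable Hilbert space and $(p_n)$ is bounded, the Banach--Alaoglu theorem (together with reflexivity of $\mathcal{H}$) yields a subsequence $(p_{n_k})$ converging weakly to some $p_* \in \mathcal{H}$.

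The key step is to show $p_* \in P$. This is where convexity enters: by Mazur's theorem, a norm-closed convex subset of a Banach space is weakly closed, so $P$ is weakly closed and hence $p_* \in P$. Finally, since $\mathcal{J}$ is a continuous linear functional on $\mathcal{H}$, it is weakly continuous, so $\mathcal{J}(p_{n_k}) \to \mathcal{J}(p_*)$. Combining with $\mathcal{J}(p_{n_k}) \to M$ gives $\mathcal{J}(p_*) = M$, so $p_*$ is a maximiser.

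There is no real obstacle here — everything is invoked from standard functional-analytic toolbox. The only point worth emphasising is that linearity of $\mathcal{J}$ is not actually used for existence (upper semi-continuity in the weak topology would suffice), but in this setting continuity of the linear map gives weak continuity for free. Uniqueness will of course require the additional strict-convexity hypothesis on $P$ from Definition \ref{stconv}, but that is outside the scope of this existence statement.
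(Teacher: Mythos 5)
Your argument is correct and is exactly the standard weak-compactness proof that the paper delegates to its references (Corollary 3.23 of \cite{brezis} and the cited lemmas in \cite{FKS,AFG}): bounded maximising sequence, weak compactness by reflexivity, weak closedness of the closed convex set $P$ via Mazur, and weak continuity of the linear functional. No gaps beyond the implicit (and harmless) assumption that $P$ is nonempty, which holds in the paper's setting since $P$ contains the zero vector.
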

Upgrading convexity of the feasible set $P$ to strict convexity provides uniqueness of the optimum.
\begin{proposition}[Uniqueness of the optimal solution]
\label{prop:uniqe} Suppose $P$ is closed, bounded, and strictly convex
subset of $\mathcal{H}$, and that $P$ contains the zero vector in its
relative interior. If $\mathcal{J}$ is not uniformly vanishing on $P$
then the optimal solution to \eqref{gen-func-opt-prob} is unique.
\end{proposition}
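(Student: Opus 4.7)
The plan is a short argument by contradiction in the standard style of convex optimisation. Suppose $p_1, p_2 \in P$ are two distinct maximisers of $\mathcal{J}$, both attaining the common value $M := \max_{p\in P}\mathcal{J}(p)$. By linearity of $\mathcal{J}$, every convex combination of $p_1$ and $p_2$ also attains the value $M$; in particular, the midpoint $p_* := \tfrac12(p_1+p_2)$ satisfies $\mathcal{J}(p_*) = M$. By strict convexity of $P$ (Definition \ref{stconv}), $p_*$ lies in the relative interior of $P$, so there is a set $U \subset P$ open in the affine hull $\mathrm{aff}(P)$ and containing $p_*$.

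Next I would argue that $\mathcal{J}$ is forced to be constant on $\mathrm{aff}(P)$. The restriction $\mathcal{J}|_{\mathrm{aff}(P)}$ is continuous and affine and attains its maximum over $P$ at the relatively interior point $p_*$. If $\mathcal{J}|_{\mathrm{aff}(P)}$ were non-constant then, by affineness, there would exist a direction $v$ parallel to $\mathrm{aff}(P)$ with $\mathcal{J}(v) > 0$; since $p_* \in U$, the point $p_* + \varepsilon v$ would lie in $U \subset P$ for all sufficiently small $\varepsilon > 0$, and $\mathcal{J}(p_* + \varepsilon v) = M + \varepsilon \mathcal{J}(v) > M$, contradicting the maximality of $M$. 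Hence $\mathcal{J}$ is constant on $\mathrm{aff}(P)$, and therefore constant on $P$.

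To close, I would use the remaining hypotheses: since $0 \in \mathrm{relint}(P) \subset P$ and $\mathcal{J}$ is linear, $\mathcal{J}(0) = 0$, so the constant value of $\mathcal{J}$ on $P$ is necessarily $0$. This contradicts the assumption that $\mathcal{J}$ is not uniformly vanishing on $P$, so the original assumption of two distinct maximisers is untenable and uniqueness follows. The only genuine subtlety is the careful bookkeeping of \emph{relative} interiors, which is needed because in the infinite-dimensional Hilbert space $\mathcal{H}$ the feasible set $P$ will typically have empty topological interior; working relative to $\mathrm{aff}(P)$ ensures one can still move in every admissible direction from an interior point, which is precisely what drives the local-to-global constancy step.
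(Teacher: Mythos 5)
Your argument is correct. The paper does not prove Proposition \ref{prop:uniqe} itself but defers to Lemma 6.2 of \cite{FKS}, Proposition 4.3 of \cite{AFG}, and \cite{brezis}; your proof is essentially the standard argument used there --- the midpoint of two distinct maximisers is again a maximiser by linearity and lies in the relative interior by strict convexity, a continuous linear functional maximised at a relatively interior point must be constant on $P$, and constancy together with $0\in P$ forces $\mathcal{J}\equiv 0$, contradicting the non-vanishing hypothesis. Your handling of the relative interior (working in the closed affine hull, as in the paper's Definition \ref{stconv}) is exactly the bookkeeping needed in the infinite-dimensional setting.
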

Note that in the case $\mathcal{J}$ is uniformly vanishing, all the
elements of $P$ are solutions of the problem $(\ref{gen-func-opt-prob}).$
See Lemma 6.2 \cite{FKS}, Proposition 4.3 \cite{AFG}, or Corollary 3.23 of \cite{brezis} for the proofs of Propositions \ref{prop:exist} and \ref{prop:uniqe}
and more details.

\section{Optimization of the expectation of an observable.}
\label{subsubsec1}
Let $c\in L^{\infty}(S^{1},\mathbb{R})$ be an observable.
We consider the
problem of finding an infinitesimal perturbation $\dot{T}$ of our map $T_0$ that maximises the rate of change of expectation of $c$.
If $c$ were an indicator, for example, one could control the invariant density toward the support of $c$.
Given a
family of maps $T_{\delta }$ satisfying (A0), (A1), (A2)
with invariant densities $f_{\delta }$, we denote the response of the system
to $\dot{T}$ by
\begin{equation*}
R(\dot{T})=\lim_{\delta \rightarrow 0}\frac{f_{\delta }-f_{0}}{\delta }.
\end{equation*}%
This limit is converging in $L^{1}$ as proved in Proposition \ref{Prop:LRmain}. Under our assumptions we easily get
\begin{equation}
\lim_{\delta \rightarrow 0}\frac{\int \ c(x)f_\delta (x) \ dx-\int \ c(x)f_{0}(x) \ dx}{
\delta }=\int \ c(x)R(\dot{T})(x)\  dx. \label{LRidea2}
\end{equation}

Hence the rate of change of the expectation of $c$ with respect to $\delta$ is given by the linear
response of the system under the given perturbation.
To take advantage of the general results of the previous section, we perform the optimisation of $\dot{T}$ over a closed, bounded, convex subset of a suitable Hilbert space $\mathcal{H}$ containing the zero perturbation.
Because  we require $\dot{T}\in C^3$ in Proposition \ref{Prop:LRmain}
we select $\mathcal{H}=H^4(S^1,\mathbb{R})$ and consider a convex closed set $P\subseteq \mathcal{H}$.
To maximise the RHS of (\ref{LRidea2}) we set $\mathcal{J}(\dot{T}):=-\int c(x)\cdot R(\dot{T})(x)\ dx$ and set $P$ to be the unit ball in $H^4(S^1,\mathbb{R})$, which is bounded, convex, and contains the zero vector.
We hence consider the problem
\begin{eqnarray}
\label{optobs}
\min_{\dot{T}\in H^4}&&\mathcal{J}(\dot{T})\\
\label{optobs2}
\mbox{subject to}&&\|\dot{T}\|_{H^4}\le 1.
\end{eqnarray}
\begin{proposition}
\label{expect-resp-cts}
If $\mathcal{J}$ is not uniformly vanishing,  there is a unique optimum map perturbation $\dot{T}$ for Problem (\ref{optobs}).
\end{proposition}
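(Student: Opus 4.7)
The plan is to apply the convex optimisation framework of Section~\ref{opt}, in particular Proposition~\ref{prop:uniqe} (together with Proposition~\ref{prop:exist} for existence), to the functional $-\mathcal{J}$ on the closed unit ball $P$ of $\mathcal{H}=H^4(S^1,\mathbb{R})$; minimising $\mathcal{J}$ is the same as maximising $-\mathcal{J}$, and the latter is a continuous linear functional iff $\mathcal{J}$ is. I therefore need to verify: (a) $P$ is bounded, closed, and strictly convex, with $0$ in its relative interior; (b) $\mathcal{J}$ is a continuous linear functional on $\mathcal{H}$; and (c) $\mathcal{J}$ is not uniformly vanishing on $P$, which is the standing hypothesis.

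Item (a) is essentially standard. The space $\mathcal{H}$ is a separable Hilbert space, and its closed unit ball is trivially bounded, closed, and convex; since the affine hull of $P$ is all of $\mathcal{H}$, the relative interior coincides with the topological interior (the open unit ball) and so contains $0$. Strict convexity is the usual consequence of the parallelogram identity: for $x\ne y$ in $P$ and $\gamma\in(0,1)$,
\begin{equation*}
\|\gamma x+(1-\gamma)y\|^{2}<\gamma\|x\|^{2}+(1-\gamma)\|y\|^{2}\le 1,
\end{equation*}
so the convex combination lies in the open ball.

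The substantive step is (b). Linearity of $\mathcal{J}(\dot{T})=-\int c\cdot R(\dot{T})\,dx$ follows immediately from the response formula (\ref{linr}) of Proposition~\ref{Prop:LRmain}(I),
\begin{equation*}
R(\dot{T})=-(I-L_{0})^{-1}L_{0}\!\left(\left[\frac{f_{0}\dot{T}}{T_{0}'}\right]'\right),
\end{equation*}
since the right-hand side is manifestly linear in $\dot{T}$. For continuity I would estimate $|\mathcal{J}(\dot{T})|\le\|c\|_{L^{\infty}}\|R(\dot{T})\|_{L^{1}}$ and chain bounded operations: the Sobolev embedding $H^{4}(S^{1},\mathbb{R})\hookrightarrow C^{3}(S^{1},\mathbb{R})$ controls $\dot{T}$ and its first three derivatives by $\|\dot{T}\|_{H^{4}}$; multiplication by $f_{0}/T_{0}'\in W^{2,1}$ (using (A0), (A1), and $f_{0}\in W^{2,1}$ from Proposition~\ref{Prop:recall}(I)) followed by weak differentiation sends $\dot{T}$ into $W^{1,1}$ with norm dominated by $\|\dot{T}\|_{C^{2}}$; $L_{0}$ is bounded on $W^{1,1}$; and because $L_{0}$ preserves integrals and its argument is a derivative, the image has zero mean and therefore lies in the complement of $\spn(f_{0})$ inside $W^{1,1}$, on which $I-L_{0}$ is boundedly invertible by the spectral gap for $L_{0}$ on $W^{1,1}$ (this is part of the stability/quasi-compactness picture underlying Proposition~\ref{Prop:recall}). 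Composing these bounds yields $\|R(\dot{T})\|_{L^{1}}\le K\|\dot{T}\|_{H^{4}}$, hence $|\mathcal{J}(\dot{T})|\le\|c\|_{L^{\infty}}K\|\dot{T}\|_{H^{4}}$.

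Once (a)--(c) are in place, Proposition~\ref{prop:uniqe} immediately delivers the unique minimiser $\dot{T}\in P$. I expect the main technical obstacle to be the continuity estimate in (b); all intermediate mapping properties are elementary algebra and Sobolev embedding, but the bounded invertibility of $I-L_{0}$ on the spectral complement of $\spn(f_{0})$ is the non-trivial ingredient and relies on the quasi-compactness of $L_{0}$ on $W^{1,1}$ invoked in Proposition~\ref{Prop:recall}.
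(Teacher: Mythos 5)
Your proposal is correct and follows essentially the same route as the paper: both reduce the statement to Propositions \ref{prop:exist} and \ref{prop:uniqe} applied to the closed unit ball of $H^4$, and both establish continuity of $\mathcal{J}$ by chaining the Sobolev embedding $H^4\hookrightarrow C^3$, the boundedness of $L_0$ on $W^{1,1}$ (Lemma \ref{Lemsu}), and the bounded resolvent $(I-L_0)^{-1}$ on the mean-zero subspace $V_1$ (Proposition \ref{propora}). Your explicit remark that the argument of the resolvent has zero mean (being $L_0$ applied to an exact derivative) is a detail the paper leaves implicit, but otherwise the two arguments coincide.
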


\begin{proof}
The result will directly follow from Propositions \ref{prop:exist}  and \ref{prop:uniqe}, once we verify that $\mathcal{J}:H^4\to\mathbb{R}$ is continuous.
Since
\begin{equation*}
R(\dot{T} )=-(I-L_{0})^{-1}\left(L_{0}
\left[\left(
\frac{f_0\dot{T}}{T'_0}
\right)'\right]\right)
\end{equation*}%
we have $||R(\dot{T} )||_{L^1 }\leq ||R(\dot{T} )||_{W^{1,1}}\leq ||(I-L_0)^{-1}||_{V_1\to W^{1,1}}\cdot\|L_0\|_{W^{1,1}\to W^{1,1}}\cdot||\left(
\frac{f_0\dot{T}}{T'_0}
\right)'||_{W^{1,1}}$. The first two terms in the product are well known to be bounded in our case (see Proposition \ref{propora} and
Lemma \ref{Lemsu}). 
Because $f_0\in W^{3,1}$ and $T_0'\in C^3$ is uniformly bounded below, there exists $C_2$ such that $\left\|\left( \frac{f_0\dot{T}}{T'_0}
\right)'\right\|_{W^{1,1}} \leq C_2||\dot{T}||_{H^4}$
for some $C_2\geq 0$.
Thus, by linearity of $R(\dot{T})$, one sees that $\dot{T}\mapsto \int c(x)\cdot R(\dot{T})(x) \ dx$ is continuous.
\end{proof}

\subsection{An explicit formula for the optimal perturbation}
\label{sec:opt}


We now state a theorem identifying this optimal perturbation for the problem \eqref{optobs}.

\begin{theorem}
\label{mainthm1}
Let $c\in L^\infty(S^1,\mathbb{R})$ be an observation function and the family of maps $T_\delta$ satisfy (A0), (A1) and (A2).
The perturbation $\dot{T}\in H^4(S^1,\mathbb{R})$ that maximises the expected linear response $\int_{S^1} c(x)R(\dot T)(x)\ dx$ (solves Problem \eqref{optobs}--\eqref{optobs2}) is given by
$\dot{T}=\sum_{n=-\infty}^{\infty} a_ne_n$, where $e_n(x)=\exp(2\pi i nx)$ and
the coefficients $a_n$ are given by
\begin{equation}
    \label{aneqnthm}
a_n=\left[\int c\cdot \overline{(I-L_0)^{-1}L_0\left(\left[\frac{e_n f_0}{T'}\right]'\right)}\right]\left.\middle/ 2\nu\left(\sum_{k=0}^4 (4\pi^2n^2)^k\right)\right.,
\end{equation}
where $\nu>0$ is chosen so that $\|\dot{T}\|_{H^4}=1$;  that is $\sum_{n=-\infty}^{\infty}\sum_{k=0}^4 (4\pi^2n^2)^ka_n^2=1$.
\end{theorem}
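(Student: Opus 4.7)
The plan is to convert the constrained optimisation into a computation on Fourier coefficients and then solve it via Riesz representation in $H^4$ (equivalently, by Lagrange multipliers). Existence and uniqueness of a minimiser are already in hand: Proposition \ref{expect-resp-cts} gives continuity of $\mathcal{J}$ and Propositions \ref{prop:exist}--\ref{prop:uniqe} supply existence and uniqueness on the (strictly convex) unit ball of $H^4$. It therefore only remains to identify the minimiser explicitly.

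Expanding $\dot T=\sum_n a_n e_n$, the linearity of $R$ in (\ref{linr}) together with continuity of $R:H^4\to L^1$ (already used in Proposition \ref{expect-resp-cts}) gives
\[
\int c\,R(\dot T)\,dx=\sum_n a_n\beta_n,\qquad \beta_n:=-\!\int c\cdot (I-L_0)^{-1}L_0\!\left(\left[\tfrac{e_n f_0}{T_0'}\right]'\right)dx,
\]
with $\beta_{-n}=\overline{\beta_n}$ since $c,f_0,T_0'$ are real and $L_0,(I-L_0)^{-1}$ commute with complex conjugation. Because $e_n^{(k)}=(2\pi i n)^k e_n$, a direct calculation yields $\langle e_m,e_n\rangle_{H^4}=\delta_{mn}w_n$ with $w_n=\sum_{k=0}^4(4\pi^2 n^2)^k$, so $\{e_n/\sqrt{w_n}\}_{n\in\mathbb{Z}}$ is an orthonormal basis of $H^4$ and the constraint collapses to $\sum_n w_n|a_n|^2\le 1$.

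The Riesz representative $V\in H^4$ of the continuous linear functional $\dot T\mapsto\int c\,R(\dot T)\,dx$ then has Fourier coefficients $v_n=\overline{\beta_n}/w_n$, obtained by evaluating $\langle e_n,V\rangle_{H^4}=\beta_n$. Cauchy--Schwarz identifies the unique optimiser on the unit ball as $\dot T_{\rm opt}=V/\|V\|_{H^4}$, whose coefficients $a_n=\overline{\beta_n}/(w_n\|V\|_{H^4})$ become formula (\ref{aneqnthm}) after rewriting $\overline{\beta_n}$ as an integral of $c$ against $\overline{(I-L_0)^{-1}L_0([e_n f_0/T_0']')}$ (using that $c$ is real) and setting $2\nu:=\|V\|_{H^4}$; the scalar $\nu>0$ is then fixed by the normalisation $\sum_n w_n|a_n|^2=1$.

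The only real bookkeeping hazard is verifying that the $\{a_n\}$ so produced do define a real-valued $\dot T_{\rm opt}$, i.e.\ $a_{-n}=\overline{a_n}$; this is immediate from $\beta_{-n}=\overline{\beta_n}$. No second-order check or separate uniqueness argument is needed, since uniqueness is already guaranteed by Proposition \ref{expect-resp-cts} and the Cauchy--Schwarz equality case.
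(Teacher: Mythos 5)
Your argument is correct, and it takes a genuinely different route from the paper's. The paper forms the Lagrangian $\mathcal{J}-\nu g$, invokes the Euler--Lagrange multiplier theorem for weakly continuous G\^ateaux variations, extracts \eqref{aneqnthm} by testing the stationarity condition against each $e_n$, and then must check a posteriori that the resulting coefficients decay fast enough ($|a_n|\le C/n^7$) to define an element of $H^4$, finishing with a separate argument that $\nu>0$. You instead diagonalise the $H^4$ inner product on the exponentials, identify the Riesz representative $V$ of the response functional (whose continuity is already supplied by Proposition \ref{expect-resp-cts}), and conclude by the equality case of Cauchy--Schwarz. This is more elementary --- no multiplier theorem is needed, and the membership $V\in H^4$ (hence the coefficient decay) comes for free from the Riesz theorem rather than from a separate estimate. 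The small prices you pay, and do address, are that the optimisation lives in the real Hilbert space $H^4(S^1,\mathbb{R})$ (handled via $\beta_{-n}=\overline{\beta_n}$) and that exchanging $\sum_n a_n\beta_n$ with the functional requires $\sum_n a_ne_n\to\dot T$ in $H^4$, which the continuity you cite justifies.

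One bookkeeping point should be fixed. With your definition $\beta_n=-\int c\,(I-L_0)^{-1}L_0\bigl(\bigl[e_nf_0/T_0'\bigr]'\bigr)$ and $c$ real, you get $\overline{\beta_n}=-\int c\cdot\overline{(I-L_0)^{-1}L_0\bigl(\bigl[e_nf_0/T_0'\bigr]'\bigr)}$, so your maximiser's coefficients are the \emph{negative} of \eqref{aneqnthm} when $2\nu=\|V\|_{H^4}>0$; the claimed rewriting silently drops this minus sign. Your derivation is the internally consistent one for maximising $\int c\,R(\dot T)$, since $R(\dot T)=-(I-L_0)^{-1}L_0\bigl(\bigl[f_0\dot T/T_0'\bigr]'\bigr)$ by \eqref{linr}; the discrepancy traces back to the paper's own sign conventions, where $\mathcal{J}=-\int c\,R$ is to be \emph{minimised} in \eqref{optobs} yet the proof deduces $\nu>0$ from maximality of $\mathcal{J}(\dot T)$. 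Either keep the minus sign explicitly in your formula for $a_n$ or take $2\nu=-\|V\|_{H^4}$, but say which, so that the sign of the optimal direction is unambiguous.
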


\begin{proof}
We write the continuous (by the proof of Proposition \ref{expect-resp-cts}) linear functional $\mathcal{J}:H^4\to\mathbb{R}$ as an $L^2$ inner product
$$\mathcal{J}(\dot T)=\left\langle c,(I-L_0)^{-1}
L_{0}
\left(\left(
\frac{f_0\dot{T}}{T'_0}
\right)'\right)
\right\rangle,$$
Define a second continuous linear functional $g:H^4\to\mathbb{R}$ using the norm constraint: $g(\dot{T})=\|\dot{T}\|_{H^4}^2-1$.

For a general continuous linear functional $\mathcal{F}:H^4\to\mathbb{R}$ we denote by $\Delta\mathcal{F}(\dot{T},\tilde{T})$ the G\^ateaux variation of the functional $\mathcal{F}$ at $\dot{T}$ in the direction $\tilde{T}$, i.e.\ $\Delta\mathcal{F}(\dot{T},\tilde{T})=\lim_{h\to 0}{\mathcal{F}(\dot{T}+h\tilde{T})}/{h}.$
Given $\dot{T}\in H^4$, $\Delta\mathcal{J}(\dot{T},\tilde{T})$ exists for all $\tilde{T}\in H^4$ by linearity of $\mathcal{J}$;  indeed
\begin{equation}
    \label{fderivexpect}
\Delta\mathcal{J}(\dot{T},\tilde{T})=\mathcal{J}(\tilde{T})=\left\langle c,(I-L_0)^{-1}
L_{0}
\left(\left(
\frac{f_0\tilde{T}}{T'_0}
\right)'\right)
\right\rangle\qquad\mbox{ for all $\dot{T},\tilde{T}\in H^4$.}
\end{equation}
The variation of the functional $g$ is defined similarly, and it is straightforward to show that
\begin{equation}
    \label{gderivexpect}
\Delta g(\dot{T},\tilde{T})=2\langle \dot{T},\tilde{T}\rangle_{H^4}\qquad\mbox{ for all $\dot{T},\tilde{T}\in H^4$.}
\end{equation}
A variation $\Delta\mathcal{F}$ is called \emph{weakly continuous} if $\lim_{\dot{S}\to \dot{T}}\Delta\mathcal{F}(\dot{S},\tilde{T})=\Delta\mathcal{F}(\dot{T},\tilde{T})$ for each $\tilde{T}$.
It is clear from the explicit expressions given above that both $\Delta\mathcal{J}$ and $\Delta g$ are weakly continuous variations.

We form the Lagrangian $$\mathcal{L}(\dot{T},\nu)=\mathcal{J}(\dot{T})-\nu g(\dot{T}),$$
with Lagrange multiplier $\nu\in\mathbb{R}$.
Having established weak continuity of the variations of $\mathcal{J}$ and $g$, the Euler--Lagrange Multiplier Theorem \cite{smith} (section 3.3), guarantees that a necessary condition for $\dot{T}$ be a local extremum of the constrained Problem (\ref{optobs}) is that:



\begin{equation}
    \label{weakformexpect1}
\Delta\mathcal{L}(\dot{T},\tilde{T})=0\qquad\mbox{ for all $\tilde{T}\in H^4$.}
\end{equation}
and
\begin{equation*}
g(\dot{T})=0.
\end{equation*}
We express $\dot{T}\in H^4$ as $$\dot{T}=\sum_{m=-\infty}^{\infty} a_me_m,$$ with $a_m\in\mathbb{C}$.

By continuity of $\Delta\mathcal{J}(\dot{T},\cdot):H^4\to\mathbb{R}$ and density of smooth functions in $H^4$, we may equivalently insist that (\ref{weakformexpect1}) holds for each $\tilde{T}=e_n=\exp(2\pi i n x)$.
That is, for each $n\in\mathbb{Z}$ one has:


\begin{eqnarray*}
    0&=&\int c\cdot \overline{(I-L_0)^{-1}L_0\left(\left[\frac{e_n f_0}{T_0'}\right]'\right)}-2\nu\sum_{k=0}^4\int \left(\sum_m a_me_m\right)^{(k)}\cdot\overline{(e_n)^{(k)}}\\
    &=&\int c\cdot \overline{(I-L_0)^{-1}L_0\left(\left[\frac{e_n f_0}{T_0'}\right]'\right)}-2\nu\sum_{k=0}^4\int \left(\sum_m (2\pi i m)^ka_me_m\right)\cdot(-2\pi i n)^k\overline{(e_n)}\\
    &=&\int c\cdot \overline{(I-L_0)^{-1}L_0\left(\left[\frac{e_n f_0}{T_0'}\right]'\right)} -2\nu\left(\sum_{k=0}^4 (4\pi^2n^2)^k\right)a_n,
\end{eqnarray*}
where final equality uses $\int e_m\cdot \overline{e_n}=\delta_{m,n}$.
Thus, for $n\in\mathbb{Z}$, the coefficients $a_n$ are given by
\begin{equation}
    \label{aneqn}
a_n=\left[\int c\cdot \overline{(I-L_0)^{-1}L_0\left(\underbrace{\left[\frac{e_n f_0}{T_0'}\right]'}_{=:b_n}\right)}\right]\left.\middle/ 2\nu\left(\sum_{k=0}^4 (4\pi^2n^2)^k\right)\right.,
\end{equation}
where $\nu$ is chosen so that $\|\dot{T}\|_{H^4}=1$ to satisfy $g(\dot{T})=0$;  that is $\sum_{n=-\infty}^{\infty}\sum_{k=0}^4 (4\pi^2n^2)^ka_n^2=1$.
Notice that each $b_n$ has zero mean as the derivative of any periodic function integrates to zero.
Further note that the integral in (\ref{aneqn}) makes sense, because $e_n\in C^\infty$, $f_0\in C^3$ (since $T_0\in C^4$) 
and $T_0'\in C^3$, and therefore the function $b_n$ is in $C^2$ (in particular uniformly bounded with $|b_n|_\infty=O(n)$ due to the derivative acting on $e_n$) and in $W^{1,1}$.

We now verify that the above $a_n$ define a $\dot{T}\in H^4$.
Because we divide by $n^8$ in (\ref{aneqn}), the above growth estimate of $b_n$ leads to a decay rate of $|a_n|\le C/n^7$.
It is a fact that if the Fourier coefficients  $a_n(f)$ of some function $f$ satisfy $a_n(f)=O(1/n^{k+1+\gamma})$ for $\gamma>0$ then $f\in C^k$.
Thus the optimal perturbations $\dot{T}$ with Fourier coefficients given by $a_n$ in (\ref{aneqn}) lie in $C^5\subset H^4$.

Finally, we show that $\nu>0$.
Setting $\tilde{T}=\dot{T}$ in  (\ref{weakformexpect1}) and using (\ref{fderivexpect}) and (\ref{gderivexpect})
we have
$$0=\Delta\mathcal{J}(\dot{T},\dot{T})-\nu\cdot\Delta g(\dot{T},\dot{T})=\mathcal{J}(\dot{T})-2\nu\cdot\langle \dot{T},\dot{T}\rangle_{H^4}.$$
Because $\mathcal{J}(\dot{T})$ is maximal, we have $\mathcal{J}(\dot{T})>0$.
The above display equation then implies that $\nu>0$.

\end{proof}

\section{Optimisation of the spectrum}

We again consider a family of maps $\{T_\delta\}$  satisfying (A0), (A1) and (A2), and assume that $L_0$ has a simple eigenvalue $\lambda_0$ satisfying $|\lambda_0|>\alpha$.
By Proposition \ref{Prop:LRmain} the corresponding eigenfunction $v_0$ lies in $W^{3,1}$.
Proposition \ref{Prop:recall} guarantees the existence of a continuous family of simple eigenvalues $\lambda_\delta$ for the perturbed  operators $L_\delta:W^{1,1}\to W^{1,1}$.
Proposition \ref{Prop:LRmain} then show the differentiability of this family, providing a formula for its derivative at $\delta=0$, namely $\dot\lambda(\dot T)=\varphi_0(\dot L(\dot T)v_0)$.

We wish to optimise $\dot{\lambda}(\dot{T})$ as a function of the map perturbation $\dot{T}$.
This optimisation will be performed on the separable Hilbert space $H^{4}(S^1)\subset C^3(S^1)$.
We therefore define a linear functional $\mathcal{J}:H^4(S^1,\mathbb{R})\to\mathbb{C}$ as
\begin{equation}
    \label{Jdefn}
\mathcal{J}(\dot{T}):=\dot{\lambda}(\dot{T})=\varphi_0(\dot{L}(\dot{T})v_0).
\end{equation}
To simplify the explicit formulae appearing later in Section \ref{sec:explicit2} for the optimal perturbation $\dot T$, we assume that $\lambda_0$ is real and positive.
We therefore wish to select a perturbation $\dot{T}$ of $T_0$ so as to maximise the rate of increase of $\lambda_0$ under the perturbation $\dot{T}$ (in other words, maximise $\dot{\lambda}$):
\begin{eqnarray}
\label{specoptprob}
\max_{\dot{T}\in H^4}&& \mathcal{J}(\dot{T})\\
\label{specoptprob2}
\mbox{subject to}&& \|\dot{T}\|_{H^4}\le 1.
\end{eqnarray}

\begin{proposition}
\label{Jcont}
If $\mathcal{J}$ is not uniformly vanishing
there is a unique optimal map perturbation $\dot{T}$ for Problem (\ref{optobs}).
\end{proposition}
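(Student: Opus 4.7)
The plan is to mirror the proof of Proposition \ref{expect-resp-cts}: show that $\mathcal{J}:H^4(S^1,\mathbb{R})\to\mathbb{R}$ is a bounded linear functional, then apply Propositions \ref{prop:exist} and \ref{prop:uniqe} to the closed unit ball $P=\{\dot{T}\in H^4:\|\dot{T}\|_{H^4}\le 1\}$, which is bounded, closed, strictly convex (by the parallelogram law in the Hilbert space $H^4$), and contains $0$ in its interior.

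First, I would verify that $\mathcal{J}$ is real-valued, so that the maximisation problem makes sense over the real Hilbert space $H^4(S^1,\mathbb{R})$. Since $\lambda_0$ is assumed real and positive and $L_0$ preserves real-valued functions, $\overline{v_0}$ is also an eigenfunction for $\lambda_0$; by simplicity of $\lambda_0$, $\operatorname{Re} v_0$ and $\operatorname{Im} v_0$ are scalar multiples of $v_0$, so $v_0$ may be chosen real. Similarly $\varphi_0$ may be taken to restrict to a bounded real functional on $W^{1,1}(S^1,\mathbb{R})$. For $\dot{T}\in H^4(S^1,\mathbb{R})$, the element $\dot{L}(\dot{T})v_0=-L_0\bigl(\bigl[v_0\dot{T}/T_0'\bigr]'\bigr)$ is then real, and so is $\mathcal{J}(\dot{T})=\varphi_0(\dot{L}(\dot{T})v_0)$.

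Second, I would establish continuity of $\mathcal{J}$ through the chain of estimates
\[
|\mathcal{J}(\dot{T})| \le \|\varphi_0\|_{(W^{1,1})^*}\cdot \|L_0\|_{W^{1,1}\to W^{1,1}}\cdot \Bigl\|\bigl[\tfrac{v_0\dot{T}}{T_0'}\bigr]'\Bigr\|_{W^{1,1}}.
\]
The first two factors are finite: $\varphi_0$ is a bounded functional on $W^{1,1}$ by Proposition \ref{Prop:LRmain}, and boundedness of $L_0$ on $W^{1,1}$ is standard (and was invoked in the proof of Proposition \ref{expect-resp-cts}). For the third factor, precisely the same Leibniz-rule product estimate as in the proof of Proposition \ref{expect-resp-cts}, with $f_0$ replaced by $v_0$, gives
\[
\Bigl\|\bigl[\tfrac{v_0\dot{T}}{T_0'}\bigr]'\Bigr\|_{W^{1,1}} \le C\,\|\dot{T}\|_{H^4}
\]
for some constant $C\ge 0$, using that $v_0\in W^{3,1}$ (by Proposition \ref{Prop:LRmain}) and that $T_0'\in C^3$ is uniformly bounded below. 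Combining these estimates shows $\mathcal{J}$ is a bounded linear functional on $H^4$, and Propositions \ref{prop:exist} and \ref{prop:uniqe} then yield the claim.

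The main technical point is the Sobolev product estimate in the second step: it requires the $W^{3,1}$ regularity of $v_0$, rather than the merely $W^{1,1}$ membership available for a generic eigenfunction on the quasi-compact part of the spectrum. This is precisely the reason Proposition \ref{Prop:LRmain} records the sharper regularity $v_0\in W^{3,1}$. Beyond this, the argument is a direct transcription of the proof of Proposition \ref{expect-resp-cts}, with $f_0$ replaced by $v_0$ and the functional $g\mapsto \int c\cdot g\,dx$ replaced by $\varphi_0$.
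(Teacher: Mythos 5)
Your proposal is correct and follows essentially the same route as the paper's proof: continuity of $\mathcal{J}$ via $|\mathcal{J}(\dot{T})|\le\|\varphi_0\|_{(W^{1,1})^*}\|\dot{L}(\dot{T})v_0\|_{W^{1,1}}$, the bound $\|L_0((\dot{T}v_0/T_0')')\|_{W^{1,1}}\le C\|(\dot{T}v_0/T_0')'\|_{W^{1,1}}\le \tilde C\|\dot{T}\|_{H^4}$ using $v_0\in W^{3,1}$ and $T_0'$ bounded below, and then Propositions \ref{prop:exist} and \ref{prop:uniqe} applied to the unit ball of $H^4$. Your additional paragraph justifying that $\mathcal{J}$ is real-valued (via simplicity of the real eigenvalue $\lambda_0$) is a small, harmless supplement that the paper leaves implicit.
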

\begin{proof}
Because $H^4(S^1)$ is a separable Hilbert space and the  unit ball in $H^4(S^1)$ is a strictly convex, bounded, closed set containing the zero element, in order to apply Propositions \ref{prop:exist}  and \ref{prop:uniqe}, it is sufficient to check that $\mathcal{J}:H^4\to\mathbb{R}$ is continuous. We have to verify that for fixed $\varphi_0\in (W^{1,1})^*$ and $v_0\in W^{3,1}$ (see Proposition \ref{Prop:LRmain}), there is a $C<\infty$ such that $|\mathcal{J}(\dot{T})|\le C\|\dot{T}\|_{H^{4}}$.
We have $$|\mathcal{J}(\dot{T})|=|\varphi_0(\dot{L}(\dot{T})v_0)|\le \|\varphi_0\|_{(W^{1,1})^*}\|\dot{L}(\dot{T})v_0\|_{W^{1,1}},$$
and so it is sufficient to show that $\|\dot{L}(\dot{T})v_0\|_{W^{1,1}}\le C\|\dot{T}\|_{H^4}$.
We note that since $T_0\in C^4  $ with $T_0'>1$ and $v_0\in W^{3,1} $ we have $v_0/T_0'\in W^{3,1}$. Now
\begin{eqnarray*}\|\dot{L}(\dot{T})v_0\|_{W^{1,1}}&=&\|L_0((\dot{T}v_0/T_0')')\|_{W^{1,1}}\\
&\le& C\|(\dot{T}v_0/T_0')'\|_{W^{1,1}}\quad\mbox{by Lemma \ref{Lemsu}}.
\end{eqnarray*}
Thus we can conclude that there is a constant $\tilde{C}>0$ such that
\begin{equation*}
\|\dot{L}(\dot{T})v_0\|_{W^{1,1}}\le\tilde C\|\dot{T}\|_{H^4}.
\end{equation*}
\end{proof}

\subsection{Explicit formula for optimal solution}
\label{sec:explicit2}


We wish to minimise $\mathcal{J}:H^4(S^1,\mathbb{R})\to \mathbb{R}$, where $\mathcal{J}(\dot{T})=\varphi_0(\dot{L}(\dot{T})v_0)$, subject to $\|\dot{T}\|_{H^4}=1$.
From Proposition \ref{Jcont} we know there is a unique optimum.
In order to write an explicit formula for the optimal $\dot{T}$ we require a representative of $\varphi_0\in (W^{1,1}(S^1,\mathbb{R}))^*$ when acting on elements of $H^1(S^1,\mathbb{R})$.

\begin{lemma}
\label{duallemma}
Let $\varphi_0\in (W^{1,1}(S^1,\mathbb{R}))^*$. Then there is a $\phi_0\in H^1(S^1,\mathbb{R})$ such that
\begin{equation}
\label{dualrep}
\varphi_0(f)=\int_{S^1} \phi_0 \bar{f}\ dx + \int_{S^1} \phi_0' \bar{f'}\ dx \quad\mbox{
for all $f\in H^1(S^1,\mathbb{R})$.}
\end{equation}
\end{lemma}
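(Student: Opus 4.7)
The plan is to reduce the statement to the Riesz representation theorem on the Hilbert space $H^1(S^1,\mathbb{R})$, after observing that $H^1$ embeds continuously into $W^{1,1}$ on the compact manifold $S^1$.

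First I would show that the inclusion $H^1(S^1,\mathbb{R})\hookrightarrow W^{1,1}(S^1,\mathbb{R})$ is continuous. Since $S^1$ has finite Lebesgue measure, the Cauchy--Schwarz inequality gives $\|g\|_{L^1}\le\|g\|_{L^2}$ for every $g\in L^2(S^1)$. Applying this to $f$ and to its weak derivative $f'$ yields
\begin{equation*}
\|f\|_{W^{1,1}}=\|f\|_{L^1}+\|f'\|_{L^1}\le \|f\|_{L^2}+\|f'\|_{L^2}\le \sqrt{2}\,\|f\|_{H^1}
\end{equation*}
for all $f\in H^1(S^1,\mathbb{R})$. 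Consequently, the restriction of $\varphi_0$ to $H^1$ is well-defined and satisfies $|\varphi_0(f)|\le\sqrt{2}\,\|\varphi_0\|_{(W^{1,1})^*}\,\|f\|_{H^1}$, so it is an element of the topological dual $(H^1(S^1,\mathbb{R}))^*$.

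Next I would invoke the Riesz representation theorem on the real Hilbert space $H^1(S^1,\mathbb{R})$, equipped with the inner product $\langle \phi,f\rangle_{H^1}=\int_{S^1}\phi f\,dx+\int_{S^1}\phi' f'\,dx$. This produces a unique $\phi_0\in H^1(S^1,\mathbb{R})$ with
\begin{equation*}
\varphi_0(f)=\langle \phi_0,f\rangle_{H^1}=\int_{S^1}\phi_0 f\,dx+\int_{S^1}\phi_0' f'\,dx
\end{equation*}
for every $f\in H^1(S^1,\mathbb{R})$, which is exactly \eqref{dualrep} (the conjugations in the statement are cosmetic since we are in the real setting).

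There is really no substantive obstacle: the only content is the Sobolev embedding $H^1\subset W^{1,1}$ on a compact one-dimensional manifold, which is immediate from Cauchy--Schwarz, after which Riesz representation does all the work. One caveat worth noting is that the representative $\phi_0$ is uniquely determined as an element of $H^1$, but this does not contradict the fact that $\varphi_0$ acts on the larger space $W^{1,1}$: the identity \eqref{dualrep} is only asserted on the subspace $H^1$, and $\phi_0$ should be viewed as an $H^1$-representative of $\varphi_0|_{H^1}$ rather than of $\varphi_0$ itself.
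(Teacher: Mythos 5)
Your proposal is correct and is essentially the paper's own argument: the paper likewise observes that $H^1(S^1)\subseteq W^{1,1}(S^1)$ implies $(W^{1,1}(S^1))^*\subseteq (H^1(S^1))^*$ and then invokes the Riesz representation theorem. Your version merely spells out the continuity of the embedding via Cauchy--Schwarz, which the paper leaves implicit.
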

\begin{proof}
Since $H^{1}(S^1)\subseteq W^{1,1}(S^1)$  we have that $(W^{1,1}(S^1))^*\subseteq (H^{1}(S^1))^*$ and therefore $\varphi_0\in (H^1(S^1))^*$.
The result follows by Riesz representation theorem.
\end{proof}
We now state a theorem identifying this optimal perturbation.
\begin{theorem}
\label{mainthm2}
Let the family of maps $T_\delta$ satisfy (A0), (A1) and (A2) and consider a family of isolated eigenvalues $\lambda_\delta$.
The perturbation $\dot{T}\in H^4(S^1,\mathbb{R})$ that maximises the expected linear response $\dot\lambda(\dot T)$ of  $\lambda_0$ (i.e.\ solves Problem (\ref{specoptprob})--(\ref{specoptprob2})) is given by
$\dot{T}=\sum_{n=-\infty}^{\infty} a_ne_n$, where $e_n(x)=\exp(2\pi i nx)$ and
the coefficients $a_n$ are given by
\begin{equation}
    \label{aneqnthm2}
a_n=\left[-\int \phi_0\cdot\overline{ L_0\left(\left(\frac{e_n v_0}{T'}\right)'\right)}-\int \phi_0'\cdot\overline{\left( L_0\left(\left(\frac{e_n v_0}{T'}\right)'\right)\right)'}\right]\left.\middle/ 2\nu\left(\sum_{k=0}^4 (4\pi^2n^2)^k\right)\right.,
\end{equation}
where $\nu>0$ is chosen so that $\|\dot{T}\|_{H^4}=1$;  that is $\sum_{n=-\infty}^{\infty}\sum_{k=0}^4 (4\pi^2n^2)^ka_n^2=1$.

Moreover, the maximal linear response is given by $$\dot\lambda(\dot T)=-\int \phi_0\cdot\overline{ L_0\left(\left(\frac{\dot T v_0}{T'}\right)'\right)}-\int \phi_0'\cdot\overline{\left( L_0\left(\left(\frac{\dot T v_0}{T'}\right)'\right)\right)'}.$$
\end{theorem}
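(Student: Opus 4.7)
The plan is to mirror the proof of Theorem \ref{mainthm1} almost step-by-step, with the key new ingredient being the Riesz-type representation of $\varphi_0$ from Lemma \ref{duallemma} used to turn $\mathcal{J}(\dot T)=\varphi_0(\dot L(\dot T)v_0)$ into an expression that is manifestly an $H^1$ pairing. Substituting the formula $\dot L(\dot T)=-L_0([\dot T v_0/T_0']')$ from \eqref{der} together with \eqref{dualrep} gives $\mathcal{J}(\dot T)=-\int \phi_0\cdot\overline{L_0(([\dot T v_0/T_0'])')}-\int \phi_0'\cdot\overline{(L_0(([\dot T v_0/T_0'])'))'}$, which is a continuous linear functional of $\dot T\in H^4$ by Proposition \ref{Jcont}.

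Next I would reproduce the Lagrangian formalism used for Theorem \ref{mainthm1}. Set $g(\dot T)=\|\dot T\|_{H^4}^2-1$ and $\mathcal{L}(\dot T,\nu)=\mathcal{J}(\dot T)-\nu g(\dot T)$. By linearity of $\mathcal{J}$, the G\^ateaux variation in direction $\tilde T$ is $\Delta\mathcal{J}(\dot T,\tilde T)=\mathcal{J}(\tilde T)$, and $\Delta g(\dot T,\tilde T)=2\langle \dot T,\tilde T\rangle_{H^4}$; both are weakly continuous by inspection. The Euler--Lagrange multiplier theorem then yields the necessary condition $\Delta\mathcal{L}(\dot T,\tilde T)=0$ for all $\tilde T\in H^4$, plus $g(\dot T)=0$. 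By density of the trigonometric system in $H^4$ it suffices to check the condition for $\tilde T=e_n$, which, using $\langle e_m,e_n\rangle_{H^4}=\delta_{m,n}\sum_{k=0}^4(4\pi^2n^2)^k$, reduces immediately to \eqref{aneqnthm2}.

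For the regularity step I would argue, as in Theorem \ref{mainthm1}, that since $v_0\in W^{3,1}$ and $T_0'\in C^3$ is uniformly bounded below, the function $(e_n v_0/T_0')'$ has $W^{1,1}$-norm of order $O(n)$; then $L_0$ is bounded on $W^{1,1}$ (Lemma \ref{Lemsu}), and $\phi_0,\phi_0'\in L^2$, so the numerator in \eqref{aneqnthm2} is $O(n)$ while the denominator is of order $n^8$. Hence $|a_n|=O(n^{-7})$, so the candidate Fourier series converges in $C^5\subset H^4$. To see $\nu>0$, I would set $\tilde T=\dot T$ in the first-order condition to obtain $\mathcal{J}(\dot T)=2\nu\|\dot T\|_{H^4}^2$; since $\mathcal{J}$ is non-vanishing on the unit ball and the optimum is the maximum, $\mathcal{J}(\dot T)>0$, forcing $\nu>0$.

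Finally, the expression for the maximal linear response is obtained simply by plugging the optimal $\dot T$ into $\mathcal{J}(\dot T)$ using the representation derived in the first step. The main subtlety I expect is just bookkeeping with complex conjugates: $\dot T$ is real, so $a_{-n}=\overline{a_n}$, but the formula \eqref{aneqnthm2} is written with $e_n$ complex, and one needs to check that pairing the dual representation against $\overline{L_0((e_n v_0/T_0')')}$ (rather than without the conjugate) is the correct way to read off the Fourier coefficient from the weak-form equation. This is essentially the same issue handled cleanly in the proof of Theorem \ref{mainthm1}, and resolves in the same way once one works consistently in the complexified $H^4$ and then notes that the resulting $\dot T$ is real because $\mathcal{J}$ is real-valued on real $\dot T$.
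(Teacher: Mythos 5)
Your proposal is correct and follows essentially the same route as the paper's proof: represent $\varphi_0$ via Lemma \ref{duallemma} (the paper additionally notes that $\dot L(\tilde T)v_0\in H^1$ via Proposition \ref{mainprop} to justify applying the representation), then run the same Lagrangian/Euler--Lagrange argument as in Theorem \ref{mainthm1}, test against $e_n$, check the decay of $a_n$, verify $\nu>0$, and substitute back for the final response formula. No substantive differences.
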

\begin{proof}
We follow a similar strategy to the proof of Theorem \ref{mainthm1}.
Given $\dot{T}\in H^4$, we first need to show that  $\Delta\mathcal{J}(\dot{T},\tilde{T})=\lim_{h\to 0}(\mathcal{J}(\dot{T}+h\tilde{T})-\mathcal{J}(\dot{T}))/h$ exists for each $\tilde{T}\in H^4$.
Since $\tilde{T}\in C^3$ and $v_0\in C^3$, Proposition \ref{mainprop} yields $\dot{L}(\tilde T)v_0\in H^1$.
Thus by Lemma \ref{duallemma}, there is a $\phi_0\in H^1(S^1,\mathbb{R})$ such that
\begin{equation*}
\mathcal{J}(\tilde{T})=\varphi_0(\dot{L}(\tilde{T})v_0)=\int \phi_0\cdot \overline{\dot{L}(\tilde T)v_0} + \phi_0'\cdot \overline{(\dot{L}(\tilde T)v_0)'},
\end{equation*}
which is finite for each $\tilde{T}\in H^4$, and so by linearity of $\mathcal{J}$, we see that for each $\dot{T},\tilde{T}\in H^4$,
\begin{eqnarray*}
\Delta\mathcal{J}(\dot{T},\tilde{T})=\phi_0(\dot{L}(\tilde{T})v_0)&=&\int \phi_0\cdot \overline{\dot{L}(\tilde T)v_0} + \phi_0'\cdot \overline{(\dot{L}(\tilde T)v_0)'}\\
&=&-\int \phi_0\cdot \overline{L_0\left(\left(\frac{\tilde{T}v_0}{T'}\right)'\right)} - \phi_0'\cdot \overline{\left[L_0\left(\left(\frac{\tilde{T}v_0}{T'}\right)'\right)\right]'}.
\end{eqnarray*}
The variation of the functional $g$ is handled identically to the proof of Theorem \ref{mainthm1};  one obtains
\begin{equation*}
\Delta g(\dot{T},\tilde{T})=2\langle \dot{T},\tilde{T}\rangle_{H^4}\qquad\mbox{ for all $\dot{T},\tilde{T}\in H^4$.}
\end{equation*}
Weak continuity of the variations $\Delta\mathcal{J}$ and $\Delta{g}$ follows as in the proof of Theorem \ref{mainthm1}.




We form the Lagrangian
$$\mathcal{L}(\dot{T},\nu)=\mathcal{J}(\dot{T}) - \nu g(\dot{T}),$$
with Lagrange multiplier $\nu\in\mathbb{R}$.
Having established weak continuity of the variations of $\mathcal{J}$ and $g$, the Euler--Lagrange Multiplier Theorem \cite{smith} (section 3.3), guarantees that a necessary condition for $\dot{T}$ be a local extremum of the constrained Problem (\ref{specoptprob})--(\ref{specoptprob2}) is that:



\begin{equation}
    \label{weakformspec1}
\Delta\mathcal{L}(\dot{T},\tilde{T})=0\qquad\mbox{ for all $\tilde{T}\in H^4$.}
\end{equation}
and
\begin{equation*}
g(\dot{T})=0.
\end{equation*}
We express $\dot{T}=\sum_{m=-\infty}^{\infty} a_me_m,$ with $a_m\in\mathbb{C}$.
By continuity of $\Delta\mathcal{J}(\dot{T},\cdot):H^4\to\mathbb{C}$, and density of smooth functions  in $H^4$ we may equivalently insist that (\ref{weakformspec1}) holds for each $\tilde{T}=e_n=\exp(2\pi i n x)$.
That is, for each $n\in\mathbb{Z}$ one has:

\begin{eqnarray*}
    0&=&-\int \phi_0\cdot \overline{L_0\left(\left(\frac{e_nv_0}{T'}\right)'\right)}- \int \phi_0'\cdot\overline{\left( L_0\left(\left(\frac{e_nv_0}{T'}\right)'\right)\right)'}-2\nu\sum_{k=0}^4\int (\sum_m a_me_m)^{(k)}\cdot\overline{(e_n)^{(k)}}\\
    &=&-\int \phi_0\cdot\overline{ L_0\left(\left(\frac{e_nv_0}{T'}\right)'\right)}- \int \phi_0'\cdot\overline{\left( L_0\left(\left(\frac{e_nv_0}{T'}\right)'\right)\right)'} -2\nu\sum_{k=0}^4\int (\sum_m (2\pi i m)^ka_me_m)\cdot(-2\pi i n)^k\overline{(e_n)}\\
    &=&-\int \phi_0\cdot \overline{L_0\left(\left(\frac{e_nv_0}{T'}\right)'\right)}- \int \phi_0'\cdot\overline{\left( L_0\left(\left(\frac{e_nv_0}{T'}\right)'\right)\right)'}-2\nu\left(\sum_{k=0}^4 (4\pi^2n^2)^k\right)a_n,
\end{eqnarray*}
where final equality uses $\int e_m\cdot \overline{e_n}=\delta_{m,n}$.
Thus, for $n\in\mathbb{Z}$, the coefficients $a_n$ are given by
\begin{equation}
    \label{aneqn2}
a_n=\left[-\int \phi_0\cdot\overline{ L_0\left(\left(\frac{e_n v_0}{T'}\right)'\right)}-\int \phi_0'\cdot\overline{\left( L_0\left(\underbrace{\left(\frac{e_n v_0}{T'}\right)'}_{=:b_n}\right)\right)'}\right]\left.\middle/ 2\nu\left(\sum_{k=0}^4 (4\pi^2n^2)^k\right)\right.,
\end{equation}
where $\nu$ is chosen so that $\|\dot{T}\|_{H^4}=1$;  that is $\sum_{n=-\infty}^{\infty}\sum_{k=0}^4 (4\pi^2n^2)^ka_n^2=1$.
Notice that each $b_n$ has zero mean as the derivative of any periodic function integrates to zero.
The integrals in (\ref{aneqn2}) 
makes sense, because $e_n\in C^\infty$, $v_0\in C^3$ 
and $T'\in C^3$ and is bounded uniformly below by 1.
Therefore the function $b_n$ is in $C^1$ (in particular uniformly bounded with $|b|_\infty=O(n)$ due to the derivative acting on $e_n$) and in $H^1$.
We verify that the above $a_n$ define a $\dot{T}\in H^4$ in exactly the same way as in Theorem \ref{mainthm1}.
The fact that $\nu>0$ follows exactly as at the end of the proof of Theorem \ref{mainthm1}.
The final claim of the theorem follows from (\ref{Jdefn}), using Lemma \ref{duallemma} to represent $\varphi_0(\dot{L}(\dot T) v_0)$.
\end{proof}


\section{Numerical approach}

The computations revolve around evaluating the integrals in (\ref{aneqnthm}) and (\ref{aneqnthm2}).
We begin by discussing the common elements of these computations and then discuss specific elements in the subsequent subsections.
In this numerical section we will denote $L_0$ by simply $L$ to avoid confusion with the approximate transfer operator at resolution $N$, which we denote by $L_N$.

We first build a projection of $\hat{L}$ (the action of $L$ in frequency space) on complex exponentials $e_n(x)=\exp(2\pi i n x)$.
A finite spatial grid $\{0, 1/N, 2/N, \ldots, (N-1)/N\}\subset S^1$ corresponds to the $N$ Fourier modes $\{e_{-N/2+1},\ldots,e_{N/2}\}$ via discrete Fourier transform.
A matrix $\hat{L}_N$ is constructed from estimates of the integrals
$$\hat{L}_{N,nm}:=\int_{S^1} L(e_m)\cdot\bar{e}_n=\int_{S_1} e_m\cdot\bar{e}_n\circ T =\overline{\int_{S_1}\bar{e}_m\cdot e_n\circ T},$$
with the latter expression above estimated using fast Fourier transform of $e_n\circ T$ on a grid eight times finer than the $N$-grid.
Matrix multiplication by $\hat{L}_N$ updates the Fourier coefficients of a function, corresponding to applying $L$ to the function itself;  in particular $L(e_n)\approx\sum_{m} \hat{L}_{N,mn}e_m$.
See \cite{CF} for details.


\subsection{Numerical computation of the optimal response of the expectation of an observable}
\label{sec:expectnumerics}

Referring to (\ref{aneqnthm}),
\begin{enumerate}
    \item We estimate $f_0$ as the inverse transform of the leading eigenvector $\hat{f}_{N,0}$ of $\hat{L}_N$.
\item Using the spatial $N$-grid $0, 1/N, 2/N, \ldots, (N-1)/N$, we create $e_n f_0 / T'$ at these points by pointwise multiplication.
\item A circular central difference is taken to estimate $(e_n f_0/T')'$.
\item $L((e_n f_0/T')')$ is estimated in frequency space as $\hat{L}_N\left(\widehat{\left(e_n f_0/T'\right)'}\right)$.
\item To numerically estimate $(I-L)^{-1}L((e_n f_0/T')')$ we solve
the linear system $\hat{y}_N=(I-\hat{L}_N)\cdot \left(\hat{L}_N\left(\widehat{\left(e_n f_0/T'\right)'}\right)\right)$ and $\hat{y}_{N,1}=0$.
The latter condition ensures that the first (constant) Fourier mode is zero, corresponding to mean-zero functions, which is the appropriate subspace for the resolvent $(I-L)^{-1}$ to act on.
\item The integral with $c$ is performed by taking a dot product of the discrete Fourier transform of the observation $\hat{c}$ and $\hat{y}_N$.
\item The result is scaled by the appropriate denominator in (\ref{aneqnthm}) to produce $a_n$ up to a scaling factor controlled by $\nu$.
\end{enumerate}

In the examples we show results obtained by constraining the perturbation $\dot{T}$ according to the standard Sobolev norm $\|\cdot\|_{H^4}$ and weighted Sobolev norms  $\|f\|_{H^4,\gamma}^2=\sum_{i=0}^4 \int (f^{(i)}/\gamma^{i})^2,$ for $\gamma\gg 1$.
Increasing $\gamma$ penalises high derivatives less and allows for optimal $\dot{T}$ with greater irregularity.

\subsection{Numerical computation of optimising an isolated spectral value}
The isolated spectrum may be estimated by the outer spectrum of $\hat{L}$;  see \cite{CW} for formal statements.
We compute $\hat{L}_N$ as described above and consider one of its eigenvalues $\lambda_0$ satisfying $\lambda_0>1/\inf|T_0'|$.
Associated with $\lambda_0$ is an eigenvector $v_0$, which we estimate as the inverse Fourier transform of the eigenvector $\hat{v}_0$ of $\hat{L}_N$ corresponding to the eigenvalue $\lambda_0$.
Referring to (\ref{aneqnthm2}) we see that we must estimate $L((e_nv_0/T')')$ and the representative $\phi_0$  of $\varphi_0$ in $H^{1}$.
The former object is calculated according to steps 1--4 in Section \ref{sec:expectnumerics}, followed by an inverse FFT.
The calculation of the representative of $\varphi_0$ is described in the next subsection.

\subsubsection{Computing a representative $\phi_0$ of $\varphi_0$ in $H^{1}$}

By the adjoint eigenproperty of $\varphi_0$ we have \begin{equation}
    \label{adjointeqn}
\varphi_0(Lf)=\lambda_0\varphi_0(f)\qquad\mbox{ for all $f\in W^{1,1}$}.
\end{equation}
We seek a representative of $\varphi_0$ in $H^{1}$ and use the ansatz $\phi_0\approx \sum_{-N/2+1}^{N/2} a_me_m$.
We ask that (\ref{adjointeqn}) holds for $f=e_n$, $n=-N/2+1,\ldots,N/2$.
Thus, using (\ref{dualrep})  we wish to find $a_m, m=-N/2+1,\ldots,N/2$ such that
\begin{eqnarray}
\nonumber
\lefteqn{\int Le_n \cdot \overline{\sum_{m=-N/2+1}^{N/2} a_me_m} + \int (Le_n)'\cdot\overline{\left(\sum_{m=-N/2+1}^{N/2} a_me_m\right)'}}\\
\label{phi0H1eqn}&=& \lambda_0\left(\int e_n\cdot \overline{\sum_{m=-N/2+1}^{N/2} a_me_m} + \int (e_n)'\cdot\overline{\left(\sum_{m=-N/2+1}^{N/2} a_me_m\right)'}\right),
\end{eqnarray}
for $n=-N/2+1,\ldots,N/2$.



Let $L(e_n)\approx\sum_{p=-N/2+1}^{N/2} \hat{L}_{N,pn}e_p$.
With this approximation, writing out (\ref{phi0H1eqn}) more explicitly, we ask that
\begin{eqnarray*}
0&=&\int ({\lambda_0}-L)e_n\cdot\overline{\sum_{m=-N/2+1}^{N/2} a_m e_m} + \int [({\lambda_0}-L)e_n]'\cdot\overline{\left[\sum_{m=-N/2+1}^{N/2} a_m e_m\right]'}\\
&\approx&\int \left({\lambda_0}e_n-\sum_{p=-N/2+1}^{N/2} \hat{L}_{N,pn}e_p\right)\cdot\overline{\sum_{m=-N/2+1}^{N/2} a_m e_m}\\
\quad&&+ \int \left({\lambda_0}e_n'-\sum_{p=-N/2+1}^{N/2} (\hat{L}_{N,pn}e_p)'\right)\cdot\overline{\left[\sum_{m=-N/2+1}^{N/2} a_m e_m\right]'}\\
&=&\int \left({\lambda_0}e_n-\sum_{p=-N/2+1}^{N/2} \hat{L}_{N,pn}e_p\right)\cdot\overline{\sum_{m=-N/2+1}^{N/2} a_m e_m} \\
\quad&&+ \int \left({\lambda_0}(2\pi i n)e_n-\sum_{p=-N/2+1}^{N/2} \hat{L}_{N,pn}(2\pi i p)e_p\right)\cdot\overline{\sum_{m=-N/2+1}^{N/2} a_m (2\pi i m)e_m}\\
&=&{\lambda_0}\bar{a}_n-\sum_{p=-N/2+1}^{N/2} (\hat{L}_{N,pn}\bar{a}_p)
+  {\lambda_0}(2\pi i n)(-2\pi i n)\bar{a}_n - \sum_{n=-N/2+1}^{N/2} (\hat{L}_{N,pn}(2\pi i p)(-2\pi i p)\bar{a}_p) \\
&=&{\lambda_0}(1+4\pi^2n^2)\bar{a}_n
-\sum_{p=-N/2+1}^{N/2} \hat{L}_{N,pn}(1+4\pi p^2)\bar{a}_p.
\end{eqnarray*}
That is, ${a}$ satisfies
\begin{equation*}
\lambda_0\bar{a}_n=\sum_{p=-N/2+1}^{N/2} \frac{\hat{L}_{N,pn}(1+(2\pi p)^2)}{1+(2\pi n)^2}{\bar a_p}.
\end{equation*}
In other words, the conjugate $a$ coefficients form a left eigenvector of $\hat{L}_N$, suitably scaled.
Since $\lambda_0$ and the $\hat{L}_{N}$ are known numerically, we may easily solve for the $a_n$, $n=-N/2+1,\ldots,N/2$.

\section{Examples}

We illustrate Theorems \ref{mainthm1} and \ref{mainthm2} in the following two subsections.
In both cases we set $N=512$.

\subsection{Optimising the expectation of observables}

We define the expanding circle map $T(x)=2x-(0.9/2\pi)\sin(2\pi x)$ computed modulo 1.
The lower expansivity of $T$ at the fixed point $x=0$ leads to greater values of the invariant density nearby $x=0$.
A graph of $T$ and its invariant density are shown in Figure \ref{fig1}.

\begin{figure}
    \centering
    \includegraphics[scale=0.7]{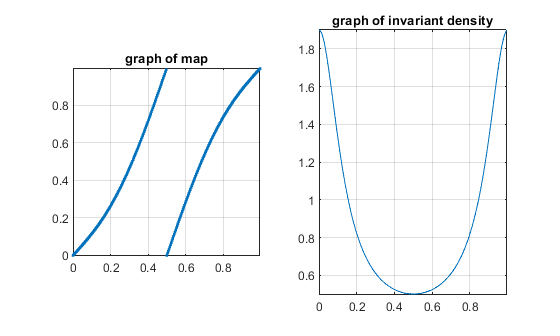}
    \caption{Graph of $x\mapsto 2x-(0.9/2\pi)\sin(2\pi x)$ and corresponding invariant density.}
    \label{fig1}
\end{figure}

Figure \ref{fig2} illustrates the optimal perturbations $\dot{T}$ for $c(x)=\cos(2\pi x)$ and various weights $\gamma$ used in the $H^4$ norm.
\begin{figure}
    \centering
    \includegraphics[width=.9\textwidth]{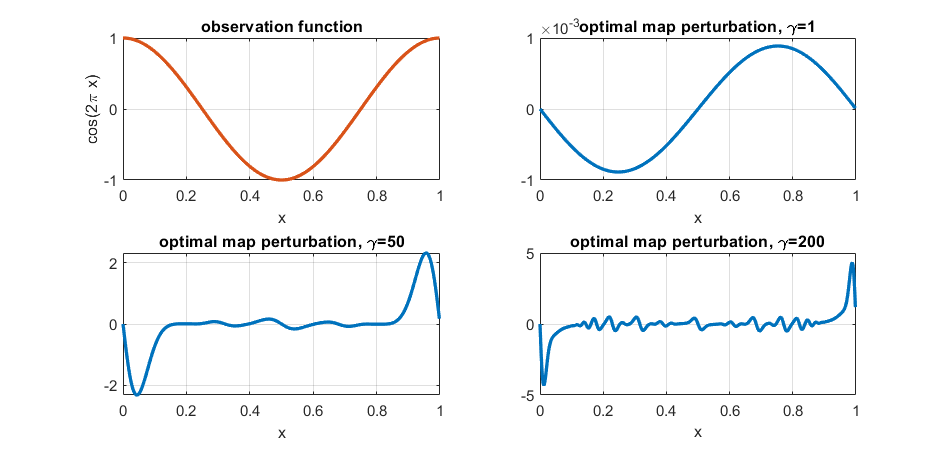}
    \caption{Upper left: observation function $c(x)=\cos(2\pi x)$. Other panels:  optimal map perturbations for different weights $\gamma$. Each perturbation has unit norm in its corresponding $\gamma$-weighted norm. As $\gamma$ increases, the optimal map perturbation may become more irregular as a smaller penalty is paid for the first to fourth-order derivatives.}
    \label{fig2}
\end{figure}
The map perturbations seek to increase the expectation of $c$. Because the maximal value of $c$ occurs at $x=0$ it is advantageous for the perturbations to retain the fixed point at $x=0$ while simultaneously reducing the expansivity of the map at the fixed point.
Such perturbations make the fixed point more ``sticky'' and will lead to invariant densities with even greater values at $x=0$, leading to increases in expectation of $c$.

Figure \ref{fig3} carries out the same experiments, replacing the observation function with $c(x)=\sin(2\pi x)$.
\begin{figure}
    \centering
    \includegraphics[width=.9\textwidth]{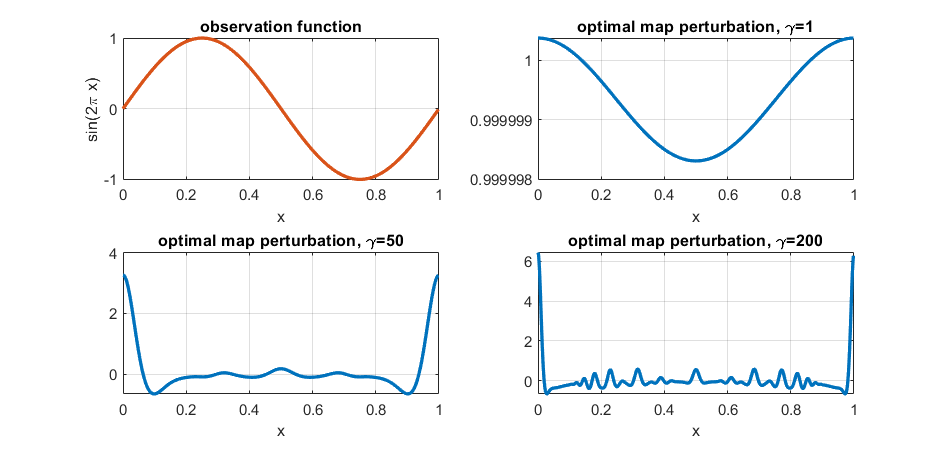}
    \caption{Upper left: observation function $c(x)=\sin(2\pi x)$. Other panels:  optimal map perturbations for different weights $\gamma$. Each perturbation has unit norm in its corresponding $\gamma$-weighted norm. As $\gamma$ increases, the optimal map perturbation may become more irregular as a smaller penalty is paid for the first to fourth-order derivatives.}
    \label{fig3}
\end{figure}
Now, there is an imperative to remove the sticky fixed point to move invariant mass away from $x=0$ and toward $x=0.25$, where $c$ takes its maximum.
As shown in Figure \ref{fig3}, the strategy is to displace the fixed point by moving it to the right from $x=0$.

\subsection{Optimising the spectrum}
\label{sec:7.2}
To optimise the spectrum, we require an isolated eigenvalue $\lambda_0$ of the transfer operator $L$ satisfying $|\lambda_0|>1/\inf|T_0'|$.
We construct a piecewise-linear Markov map of $S^1$ by linearly connecting the points (rounded to 4 decimal places) $$x\in\{ 0.0,
 0.1197,
 0.2045,
 0.2453,
 0.3369,
 0.3874,
 0.49,
 0.5875,
 0.6336,
 0.7343,
 0.7695,
 0.8523,
 1.0\}$$
 to their respective images (to be taken mod 1) $$T_0(x)\in\{0.0,
 0.1976,
 0.3032,
 0.4505,
 0.5885,
 0.7312,
 1.0,
 1.1976,
 1.3032,
 1.4505,
 1.5885,
 1.7312,
 2.0\}$$ as shown in Figure \ref{fig:gapmap} (left).
 \begin{figure}
 \begin{center}
\includegraphics[width=.49\textwidth]{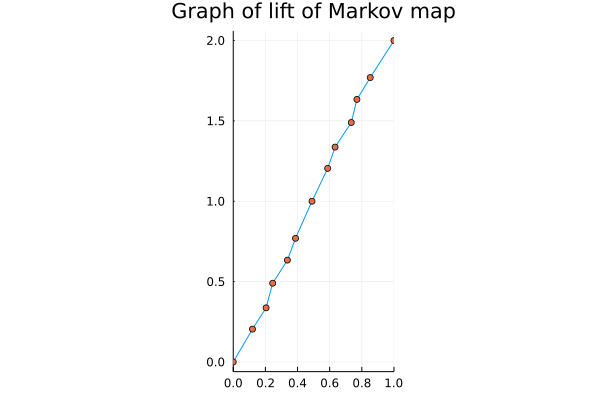}
\includegraphics[width=.49\textwidth]{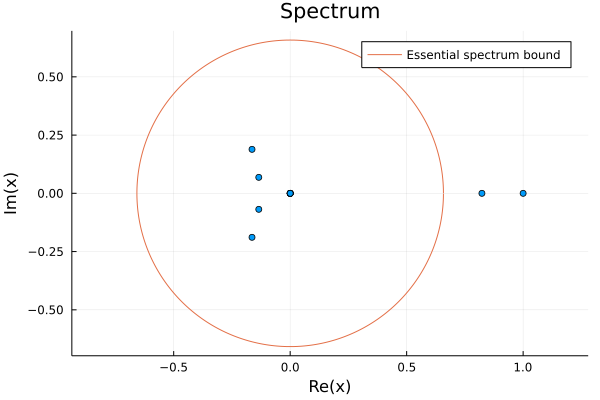}
\end{center}
\caption{Left: the graph of the lift of the two-full-branch piecewise-linear circle map constructed by joining the 13 point pairs $(x,T_0(x))$ listed toward the beginning of section \ref{sec:7.2} and shown as orange dots in the figure. Right: 12 eigenvalues, shown as blue dots, of the $12\times 12$ matrix representing the restriction of the transfer operator of the Markov map to the invariant subspace spanned by 12 indicator functions supported on the domains of linearity of the map. The essential spectrum bound produced by the inverse of the minimum slope of the 12 linear branches is shown in red.}
\label{fig:gapmap}
\end{figure}
\begin{figure}
    \centering
\includegraphics[width=0.65\textwidth]{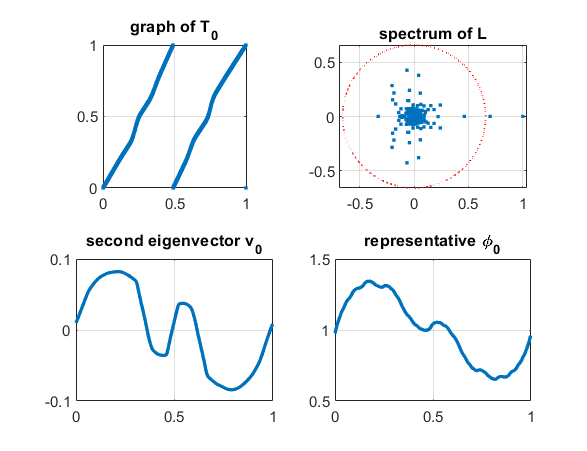}
    \caption{Upper left: graph of a smoothed $T_0$ derived from the map in Figure \ref{fig:gapmap} (left).  Upper right:  spectrum of $L_N$, with $1/\inf|T'_0|$ indicated as a dotted red circle.  Lower left: second eigenvector $u_0$, normalised so that $\varphi_0(u_0)=1$.  Lower right: representative of $\varphi_0$ in $W^{1,\infty}$, normalised so that $\varphi_0(\mathbf{1})=1$.}
    \label{fig:isolspecsummary}
\end{figure}
This Markov map has an isolated spectral value of  $\lambda_0\approx 0.8231$, while $1/\inf|T'_0|\approx 0.6579$;  see Figure \ref{fig:gapmap} (right).
We are not aware of another two-branch\footnote{Keller and Rugh \cite{KR} construct a two-branch circle map with an isolated negative eigenvalue;  by considering two iterates of this map, one would obtain a four-branch circle map with a positive isolated eigenvalue.} circle map in the literature whose Perron-Frobenius operator has a positive isolated eigenvalue strictly inside the unit circle, but larger than the reciprocal of the magnitude of minimal slope.  

We then smooth this map by convolving with a bump function $$b(x)=\exp\left(\frac{-1}{1-(x/\epsilon)^2}\right)\bigg/\kappa\epsilon,$$ using $\epsilon=1/40$;  $\kappa$ is a constant chosen so that $\int_{S_1} b =1$.
The smoothed map satisfies $1/\inf|T'_0|\approx 0.6579$ and the second largest magnitude eigenvalue of $\hat{L}_N$ for this smoothed map $T_0$ is $\lambda_0\approx 0.6992$;  as we no longer discuss the original piecewise linear map we reuse the notation $T_0$ and $\lambda_0$.
The graph of the smoothed map $T_0$, its numerical spectrum, and estimates of the eigenvector $v_0$ and the representative $\varphi_0$ are displayed in Figure \ref{fig:isolspecsummary}.

Figure \ref{fig:spectrum_optim} illustrates the optimal perturbations $\dot{T}$ to maximally increase the isolated spectral value.
\begin{figure}
\centering
\includegraphics[width=0.9\textwidth]{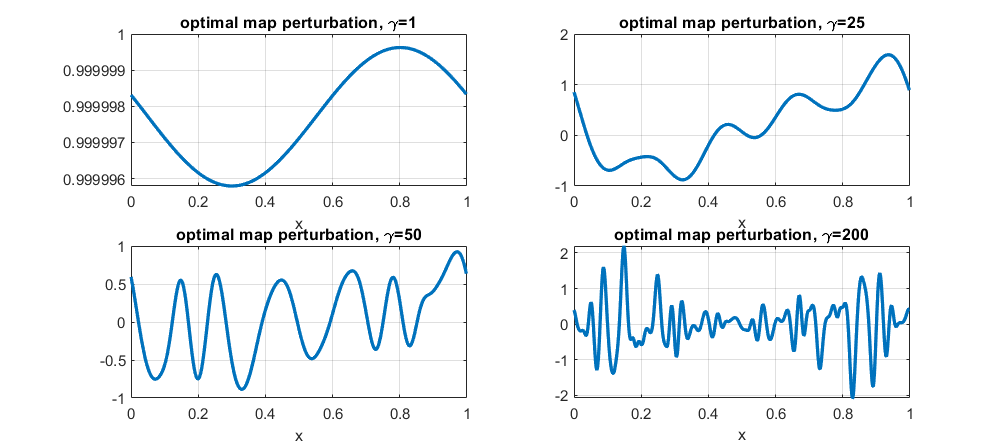}    \caption{Optimal perturbations $\dot{T}$ for different weights $\gamma$.  Each perturbation $\dot{T}$ has unit norm in its corresponding $\gamma$-weighted norm. As $\gamma$ increases, the optimal map perturbation may become more irregular because a smaller penalty is paid for the first to fourth-order derivatives. The values of $\dot\lambda(\dot T)$ are (in order upper left, upper right, lower left, lower right): 0.5758, 3.1427, 11.1590, and 125.51, demonstrating that as we allow greater irregularity in the $\gamma$-weighted norm we are able to increase the spectral response.}
\label{fig:spectrum_optim}
\end{figure}
Using Theorem \ref{mainthm2} we may also compute the linear responses of $\lambda_0$ with respect to the optimal map perturbations $\dot{T}$ shown in Figure \ref{fig:spectrum_optim} for various $\gamma$-weighted $H^4$ norms.
For $\gamma=1, 25, 50,$ and $200$ we find that $\dot{\lambda}(\dot T)$ is approximately $0.5758, 3.1427, 11.1590,$ and $125.51$, respectively.
These values indicate that as we reduce the penalty on the irregularity of the perturbations $\dot T$ by increasing $\gamma$, we may increase the corresponding linear response of $\lambda_0$ without limit.
To put these numbers in perspective, we remark that even a movement of the spectral value $\lambda_0$ by an amount 0.1 would be dramatic, and with $\gamma=200$, making a macroscopic peturbation of $T_0$ by $\dot{T}/1000$ would exceed such a movement (up to linear approximation).


\appendix
\section{General Linear Response formulas for eigenvalues and eigenvectors and application to  expanding maps\label{app}}

In this section we recall general results for the linear response of fixed
points, eigenvalues and eigenvectors of Markov operators under suitable
perturbations.
We then develop the estimates that are necessary to apply the
results to expanding maps and deterministic perturbations. Let $X$ be a
compact Riemann manifold and let $m$ be its normalized volume measure. Denote by $L^1(X,m)$ or simply by $L^1$ the space of $m$-integrable functions.
We consider a sequence of Banach
spaces
$$ (B_{ss},\Vert ~\Vert _{ss})\subseteq (B_{s},\Vert ~\Vert _{s})\subseteq
(B_{w},\Vert ~\Vert _{w}),$$ with $B_w\subseteq L^1(X,m)$ and the norms satisfying
\begin{equation*}
\Vert ~\Vert _{w}\leq \Vert ~\Vert _{s}\leq \Vert ~\Vert _{ss}.
\end{equation*}
Let us suppose that $B_{ss}$ contains the constant functions.
For $i\in \{ss,s,w\}$ define the following (closed) zero-mean function spaces $V_{ss}\subseteq
V_{s}\subseteq V_{w}$ by $V_{i}:=\{f \in B_{i}:\int_X f\ dm=0\}.$
We will consider Markov\footnote{%
A Markov operator $L:B\to B$ is a linear operator satisfying (i) $L(f)\ge 0$ if $f\ge 0$ and (ii)
$\int_X L(f)\ dm=\int_X f\ dm$ for all $f\in B$.} 
operators
acting on these spaces.
If $A,B$ are two
normed vector spaces and $T:A\rightarrow B$ we denote the mixed norm $\Vert
T\Vert _{A\rightarrow B}$ as
\begin{equation*}
\Vert T\Vert _{A\rightarrow B}:=\sup_{f\in A,\Vert f\Vert _{A}\leq 1}\Vert
Tf\Vert _{B}.
\end{equation*}

\subsection{Abstract linear response of invariant densities}

Under the general assumptions and notations above we are now going to state an abstract result on the linear response of invariant densities. Similar results and constructions appear in the literature, applied to specific classes of examples, we include a proof of the statement for completeness.

\begin{theorem}
\label{LR}
Let us consider  $\overline{\delta}>0$, $\delta \in [ 0,\overline{\delta})$ and a family of Markov operators $L_\delta:B_w\to B_w.$ 
Suppose that for $\delta \in \lbrack 0,\bar{\delta})$ there is a
probability density $\ v_{\delta }\in B_{s}$ such that 

\begin{equation*}
L_{\delta }v_{\delta }=v_{\delta }
\end{equation*}%
and that there is $\dot{L}v_{0}\in B_{s}$ such that%
\begin{equation}\label{resex}
\lim_{\delta \rightarrow 0}\left\|\frac{L_{\delta }-L_{0}}{\delta }v_{0}-\dot{L}%
v_{0}\right\|_{s}=0.
\end{equation}%
Suppose that for  $\delta \in [ 0,\overline{\delta})$ the resolvent operators $(Id-L_\delta)^{-1}$ 
of $L_\delta$ are defined and bounded from $V_{s}$ to $
V_{w},$

\begin{equation}\label{resbound}
||(Id-L_{\delta})^{-1}||_{V_{s}\rightarrow V_{w}}<+\infty \end{equation}
and
\begin{equation}\label{reslim}
\lim_{\delta \rightarrow 0}||(Id-L_{\delta
})^{-1}-(Id-L_{0})^{-1}||_{V_{s}\rightarrow V_{w}}=0.
\end{equation}%
Then
\begin{equation*}
\lim_{\delta \rightarrow 0}\left\|\frac{v_{\delta }-v_{0}}{\delta }%
-(Id-L_{0})^{-1}\dot{L}v_{0}\right\|_{V_{w}}=0.
\end{equation*}
\end{theorem}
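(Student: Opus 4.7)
The plan is to start from the invariance relations $L_\delta v_\delta = v_\delta$ and $L_0 v_0 = v_0$ and derive a ``resolvent identity'' for the difference $v_\delta - v_0$. Subtracting gives
$$v_\delta - v_0 = L_\delta v_\delta - L_0 v_0 = L_\delta(v_\delta - v_0) + (L_\delta - L_0)v_0,$$
so $(Id - L_\delta)(v_\delta - v_0) = (L_\delta - L_0)v_0$. The first key observation is that both sides are zero-mean: $v_\delta$ and $v_0$ are probability densities, so $v_\delta - v_0 \in V_w$, and since $L_\delta$ and $L_0$ are Markov operators, $(L_\delta - L_0)v_0$ integrates to zero. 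Moreover, (\ref{resex}) tells us $(L_\delta - L_0)v_0/\delta$ converges in $\|\cdot\|_s$ to $\dot L v_0 \in B_s$, so $(L_\delta - L_0)v_0$ itself lies in $V_s$ for small $\delta$, and $\dot L v_0$ inherits zero mean from the convergence (since $\|\cdot\|_s \geq \|\cdot\|_w$ controls the $L^1$ norm). Hence we may apply the resolvent $(Id - L_\delta)^{-1} : V_s \to V_w$ from (\ref{resbound}) to obtain the clean identity
$$\frac{v_\delta - v_0}{\delta} = (Id - L_\delta)^{-1} \frac{(L_\delta - L_0)v_0}{\delta}.$$

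The second step is the standard ``add and subtract'' trick to compare this with the claimed limit $(Id - L_0)^{-1}\dot L v_0$. I would insert $(Id - L_0)^{-1}(L_\delta - L_0)v_0/\delta$ and split the difference into
\begin{align*}
\frac{v_\delta - v_0}{\delta} - (Id - L_0)^{-1}\dot L v_0
&= \bigl[(Id - L_\delta)^{-1} - (Id - L_0)^{-1}\bigr]\frac{(L_\delta - L_0)v_0}{\delta} \\
&\quad + (Id - L_0)^{-1}\left[\frac{(L_\delta - L_0)v_0}{\delta} - \dot L v_0\right].
\end{align*}
Taking $\|\cdot\|_w$ on both sides and applying the mixed-norm bound, the first summand is controlled by $\|(Id - L_\delta)^{-1} - (Id - L_0)^{-1}\|_{V_s \to V_w}$ times $\|(L_\delta - L_0)v_0/\delta\|_s$, where the operator norm vanishes as $\delta \to 0$ by (\ref{reslim}) and the second factor stays bounded since it converges to $\dot L v_0$ in $\|\cdot\|_s$ by (\ref{resex}). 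The second summand is controlled by $\|(Id - L_0)^{-1}\|_{V_s \to V_w}$, finite by (\ref{resbound}), times the quantity $\|(L_\delta - L_0)v_0/\delta - \dot L v_0\|_s$, which vanishes by (\ref{resex}). Both terms go to $0$, giving the desired convergence.

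Most of the argument is bookkeeping: the only mild subtlety is confirming that all the objects we plug into the resolvent are genuinely in the zero-mean space $V_s$, which I would dispatch at the outset using the Markov property and the inclusion of norms. No step looks like a real obstacle — the abstract hypotheses (\ref{resex}), (\ref{resbound}), (\ref{reslim}) are essentially tailor-made so that the add-and-subtract decomposition closes. The ``work'' in applying this theorem to expanding maps is hidden in verifying those hypotheses in the concrete setting, which is what the rest of the appendix is presumably devoted to.
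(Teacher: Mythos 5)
Your proposal is correct and follows essentially the same route as the paper's proof: the same identity $(Id-L_\delta)(v_\delta-v_0)=(L_\delta-L_0)v_0$, the same application of the resolvent, and the same add-and-subtract decomposition controlled by \eqref{resex}, \eqref{resbound} and \eqref{reslim}. Your explicit verification that the relevant quantities lie in the zero-mean spaces is in fact slightly more careful than the paper's, which simply remarks that $L_\delta$ preserves $V_s$.
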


\begin{proof}
We have that for each $\delta \in \lbrack 0,\bar{\delta})$, $v_{\delta }$ is
a fixed point of $L_{\delta }.$ Using this we get%
\begin{eqnarray*}
(Id-L_{\delta })\frac{v_{\delta }-v_{0}}{\delta } &=&\frac{v_{\delta }-v_{0}%
}{\delta }-\frac{L_{\delta }v_{\delta }-L_{\delta }v_{0}}{\delta } \\
&=&\frac{-v_{0}+L_{\delta }v_{0}}{\delta } \\
&=&\frac{1}{\delta }(L_{\delta }-L_{0})v_{0}.
\end{eqnarray*}%
We remark that for each $\delta ,$ $L_{\delta }$ preserves $V_{s}$. Since $%
\forall \delta >0$, $\frac{L_{\delta }-L_{0}}{\delta }v_{0}\in V_{s}$ \ and by the assumptions, \eqref{resbound}, \eqref{reslim},
for $\delta $ small enough
$(Id-L_{\delta })^{-1}:V_{s}\rightarrow V_{w}$ is a uniformly bounded operator. We can
apply the resolvent to both sides of the expression above to get%
\begin{eqnarray}
(Id-L_{\delta })^{-1}(Id-L_{\delta })\frac{v_{\delta }-v_{0}}{\delta }
&=&(Id-L_{\delta })^{-1}\frac{L_{\delta }-L_{0}}{\delta }v_{0}  \label{xx1}
\\
&=&(Id-L_{\delta })^{-1}\frac{L_{\delta }-L_{0}}{\delta }%
v_{0}-(Id-L_{0})^{-1}\frac{L_{\delta }-L_{0}}{\delta }v_{0}  \notag \\
&&+(Id-L_{0})^{-1}\frac{L_{\delta }-L_{0}}{\delta }v_{0}.
\end{eqnarray}%
Since $||(Id-L_{\delta })^{-1}-(Id-L_{0})^{-1}||_{V_{s}\rightarrow
V_{w}}\rightarrow 0$ \ we have%
\begin{eqnarray*}
||[(Id-L_{\delta })^{-1}-(Id-L_{0})^{-1}]\frac{L_{\delta }-L_{0}}{\delta }%
v_{0}||_{w} &\leq &||(Id-L_{\delta
})^{-1}-(Id-L_{0})^{-1}||_{V_{s}\rightarrow V_{w}}||\frac{L_{\delta }-L_{0}}{%
\delta }v_{0}||_{s} \\
&\rightarrow &0.
\end{eqnarray*}%
Since $\lim_{\delta \rightarrow 0}\frac{L_{\delta }-L_{0}}{\delta }v_{0}$
converges in $V_{s},$ then $\ (\ref{xx1})$ \ implies that in the $B_{w}$
topology%
\begin{eqnarray*}
\lim_{\delta \rightarrow 0}\frac{v_{\delta }-v_{0}}{\delta } &=&\lim_{\delta
\rightarrow 0}~(Id-L_{0})^{-1}\frac{L_{\delta }-L_{0}}{\delta }v_{0} \\
&=&(Id-L_{0})^{-1}\lim_{\delta \rightarrow 0}\frac{L_{\delta }-L_{0}}{\delta
}v_{0} \\
&=&(Id-L_{0})^{-1}[\dot{L}v_{0}].
\end{eqnarray*}
\end{proof}

\subsection{Abstract linear response of the resolvent and linear response of
eigenvalues\label{sect:GL}}

In this section we provide general statements about the response of eigenvalues and resolvent operators.
The results presented follow from classical statements we adapt to our purposes.
Corollary \ref{C15} illustrates the abstract linear response of the resolvent and Proposition \ref{eigdiff} provides differentiability of an isolated eigenvalue and of its corresponding eigenvector in $B_w$.

We recall the abstract  linear response result for the resolvent operator
proved in \cite{GL06} and we apply it to get linear response formulas
for simple eigenvalues and eigenvectors of transfer operators.
Recalling the Banach spaces $B_{ss}, B_s, B_w$ from the last subsection, 
let us suppose that all are separable and that $B_{ss}$ is dense in $ B_{s}$.
Let us consider  $\overline{\delta}>0$, $\delta \in [ 0,\overline{\delta})$ and a family of Markov operators $L_\delta$ and an operator $\dot{L}:B_s \to B_w$ satisfying the following assumptions:
there are $C\ge 0$ and $0\le \alpha<1$ such that for each $n\in \mathbb{N}$ and $\delta \in \lbrack 0,\overline{\delta})$:

\begin{enumerate}
\item[(GL1)] $||L_{\delta }^{n}||_{w}\leq C||f||_{w}$

\item[(GL2)]\label{newLY}
 $||L_{\delta }^{n}||_{s}\leq C\alpha
^{n}||f||_{s}+C||f||_{w}$, $||L_{\delta }^{n}||_{ss}\leq C\alpha
^{n}||f||_{ss}+C||f||_{s}$

\item[(GL3)]  $||L_{\delta
}||_{B_{s}\rightarrow B_{s}}\leq C$, and $||L_{\delta }||_{B_{ss}\rightarrow
B_{ss}}\leq C$

\item[(GL4)] $||L_{\delta }-L_{0}||_{B_{s}\rightarrow B_{w}}\leq C\delta $
and $||L_{\delta }-L_{0}||_{B_{ss}\rightarrow B_{s}}\leq C\delta .$

\item[(GL5)]  $||\dot{L}||_{B_{s}\rightarrow B_{w}}\leq C$,  $||\dot{L}%
||_{B_{ss}\rightarrow B_{s}}\leq C$ and

\item[(GL6)] $||L_{\delta }-L_{0}-\delta \dot{L}||_{B_{ss}\rightarrow
B_{w}}\leq C\delta ^{2}$.
\end{enumerate}
Under these assumptions one has:
\begin{theorem}[\cite{GL06}]
\label{GL} Consider a family of operators $L_\delta$ satisfying the assumptions (GL1),...,(GL6).
Further consider $\delta'>0 $, $\alpha'>\alpha$ and the set $$Q_{\delta',\alpha'}=\{z\in \mathbb{C}: |z|\geq\alpha', \mbox{dist}(\sigma(L_0),z)>\delta'\}$$ where $\sigma(L_0)=\sigma_s(L_0)\cup \sigma_{ss}(L_0) $ and $\sigma_{s}(L_0), \sigma_{ss}(L_0)$ denote the spectrum of $L_0$ acting on $B_{s}$ and $B_{ss}$ respectively.

Then there is $C_2\geq 0$ such that for each $z\in Q_{\delta',\alpha'}$ and $\delta$ small enough
\begin{equation}
||(z-L_{\delta })^{-1}-(z-L_{0})^{-1}(Id+\delta \dot{L}%
(z-L_{0})^{-1})||_{B_{ss}\rightarrow B_{w}}\leq C_2|\delta |^{1+\eta }
\label{ww}
\end{equation}%
where $\eta =\log (\alpha' /\alpha )/\log (1/\alpha )$.
\end{theorem}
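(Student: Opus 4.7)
The plan is to follow the Gouëzel--Liverani approach and derive the expansion from a twice-iterated resolvent identity, combined with the Keller--Liverani-type quasi-compactness estimates encoded in (GL1)--(GL6). The starting identity is
\begin{equation*}
(z-L_\delta)^{-1} - (z-L_0)^{-1} = (z-L_\delta)^{-1}(L_\delta-L_0)(z-L_0)^{-1},
\end{equation*}
which, iterated once more on the leading factor, yields
\begin{equation*}
(z-L_\delta)^{-1} = (z-L_0)^{-1} + (z-L_0)^{-1}(L_\delta-L_0)(z-L_0)^{-1} + R_\delta,
\end{equation*}
where $R_\delta = (z-L_\delta)^{-1}[(L_\delta-L_0)(z-L_0)^{-1}]^2$. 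The target expression in \eqref{ww} replaces $L_\delta-L_0$ by $\delta\dot L$ in the middle term; assumption (GL6) controls that substitution with an error $O(\delta^2)$ in the $B_{ss}\to B_w$ norm, provided $(z-L_0)^{-1}$ is bounded in the relevant spaces.

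First I would establish the uniform resolvent bounds needed on $Q_{\delta',\alpha'}$. Using the Lasota--Yorke-type inequalities (GL2) on each of $B_s$ and $B_{ss}$, together with a Neumann-series/spectral-projection argument, one shows that for $z$ staying at distance at least $\delta'$ from $\sigma(L_0)$ and $|z|\ge \alpha'>\alpha$, the resolvents $(z-L_0)^{-1}$ are bounded operators on $B_{ss}$, $B_s$, and $B_w$ with bounds depending only on $\delta'$ and $\alpha'$. One then propagates this to $(z-L_\delta)^{-1}$ for small $\delta$ by the Keller--Liverani stability argument, using (GL4) to compare $L_\delta$ with $L_0$. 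Combining these bounds with (GL4) gives $\|(L_\delta-L_0)(z-L_0)^{-1}\|_{B_s\to B_w}\le C\delta$ and $\|(L_\delta-L_0)(z-L_0)^{-1}\|_{B_{ss}\to B_s}\le C\delta$, so that by the ladder $B_{ss}\to B_s\to B_w$ the squared factor in $R_\delta$ is $O(\delta^2)$ from $B_{ss}$ to $B_w$, modulo the final resolvent factor.

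The main obstacle, and what produces the Hölder exponent $\eta=\log(\alpha'/\alpha)/\log(1/\alpha)$, is that the outer factor $(z-L_\delta)^{-1}$ in $R_\delta$ cannot be uniformly bounded $B_w\to B_w$; only mixed-norm bounds are available. The standard device is to truncate the Neumann expansion $(z-L_\delta)^{-1}=\sum_{n=0}^{N-1}L_\delta^n/z^{n+1}+z^{-N}L_\delta^N(z-L_\delta)^{-1}$ at some level $N$, then use (GL2) to bound the tail in mixed norms: the $\alpha^N$ decay in the strong norm competes against $|z|^{-N}\le (\alpha')^{-N}$. Choosing $N\sim \log(1/\delta)/\log(1/\alpha)$ balances the two contributions and converts the naive $\delta^2$ estimate in the weakest norm into $\delta^{1+\eta}$ in the mixed $B_{ss}\to B_w$ norm. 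This interpolation-in-iterates step is the heart of the argument in \cite{GL06} and is where the exponent $\eta$ enters.

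Finally, I would collect the three contributions: the leading term which matches the second resolvent on the right-hand side of \eqref{ww}; the Taylor-type correction from replacing $L_\delta-L_0$ by $\delta\dot L$, bounded by $C\delta^2$ via (GL6) and the already-established resolvent bounds; and the remainder $R_\delta$, bounded by $C_2\delta^{1+\eta}$ by the truncation argument above. Since $1+\eta\le 2$, the overall estimate is $C_2|\delta|^{1+\eta}$, uniformly for $z\in Q_{\delta',\alpha'}$, yielding the claim.
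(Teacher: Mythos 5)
The paper does not prove this statement: Theorem \ref{GL} is quoted verbatim from \cite{GL06} (it is essentially Theorem 8.1 there, specialised to a first-order expansion on the three-space ladder $B_{ss}\subseteq B_s\subseteq B_w$), so there is no in-paper argument to compare yours against. Judged on its own terms, your sketch identifies the correct ingredients of the Gou\"ezel--Liverani proof: the iterated resolvent identity, (GL6) for the Taylor substitution $L_\delta-L_0\rightsquigarrow\delta\dot L$, the Keller--Liverani mixed-norm resolvent bounds, and the truncated Neumann series with $N\sim\log(1/\delta)/\log(1/\alpha)$ whose balancing of $\alpha^N$ against $|z|^{-N}\le(\alpha')^{-N}$ produces exactly the exponent $\eta=\log(\alpha'/\alpha)/\log(1/\alpha)$.

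There is, however, one concrete gap. You dispose of the Taylor-correction term $(z-L_0)^{-1}\bigl[(L_\delta-L_0)-\delta\dot L\bigr](z-L_0)^{-1}$ with the remark that (GL6) gives an $O(\delta^2)$ error ``provided $(z-L_0)^{-1}$ is bounded in the relevant spaces.'' It is not: reading the composition right to left on $f\in B_{ss}$, the bracketed operator lands you in $B_w$ with norm $O(\delta^2)$, and the leftmost resolvent must then act on $B_w$, where $z$ may well lie inside the spectrum of $L_0$ and no resolvent bound is available. This is precisely the same obstruction you correctly flag for the outer factor $(z-L_\delta)^{-1}$ in $R_\delta$, and it must be resolved the same way: each error piece needs \emph{two} bounds --- $O(\delta)$ in $B_s$ (from (GL4)--(GL5)) and $O(\delta^2)$ in $B_w$ (from (GL6)) --- and the truncated Neumann expansion of the outer resolvent then trades these off, with the partial sums controlled by (GL1) against $(\alpha')^{-N}$ and the tail by the Lasota--Yorke inequality (GL2); both contributions are $O(\delta^{1+\eta})$ for the stated choice of $N$. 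With that correction the structure of your argument is the right one, though the uniform-in-$\delta$ Keller--Liverani resolvent bounds you invoke in passing are themselves a nontrivial piece of the proof rather than a black box.
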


We remark that $(\ref{ww})$ gives the first-order change of the resolvent
when the operator is perturbed; in fact from (\ref{ww})
one has the following immediate rearrangement%
\begin{equation}
||(z-L_{\delta })^{-1}-(z-L_{0})^{-1}+\delta (z-L_{0})^{-1}\dot{L}%
(z-L_{0})^{-1})||_{B_{ss}\rightarrow B_{w}}\leq C_2|\delta |^{1+\eta }
\label{1112}
\end{equation}%
and the following Corollary:

\begin{corollary}
\label{C15}
Under the above assumptions and with the same notations
\begin{equation*}
\left\|\frac{(z-L_{\delta })^{-1}-(z-L_{0})^{-1}}{\delta }-(z-L_{0})^{-1}\dot{L}%
(z-L_{0})^{-1}\right\|_{B_{ss}\rightarrow B_{w}}\leq C_2|\delta |^{\eta }.
\end{equation*}
\end{corollary}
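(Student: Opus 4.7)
The plan is very short: this corollary is essentially a one-line rearrangement of Theorem~\ref{GL}, so the main work has already been done in \cite{GL06} and the whole task here is bookkeeping.

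First, I would expand the composition appearing inside the norm in \eqref{ww} by distributivity of operator composition:
$$(z-L_{0})^{-1}\bigl(Id + \delta\, \dot{L}(z-L_{0})^{-1}\bigr) \;=\; (z-L_{0})^{-1} + \delta\,(z-L_{0})^{-1}\dot{L}(z-L_{0})^{-1}.$$
Substituting this identity into the left-hand side of \eqref{ww} yields precisely the rearranged inequality \eqref{1112} (the paper's stated ``immediate rearrangement''):
$$\bigl\|(z-L_{\delta})^{-1}-(z-L_{0})^{-1}-\delta\,(z-L_{0})^{-1}\dot{L}(z-L_{0})^{-1}\bigr\|_{B_{ss}\to B_{w}} \le C_{2}|\delta|^{1+\eta}.$$

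Second, for $\delta\ne 0$ I would divide both sides by $|\delta|$ and pull the scalar $1/\delta$ inside the norm by positive homogeneity. Since only the first two terms carry a factor of $\delta$, one obtains
$$\left\|\frac{(z-L_{\delta})^{-1}-(z-L_{0})^{-1}}{\delta}-(z-L_{0})^{-1}\dot{L}(z-L_{0})^{-1}\right\|_{B_{ss}\to B_{w}} \le C_{2}|\delta|^{\eta},$$
which is the stated bound.

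The only point worth flagging is that the constant $C_{2}$ and the exponent $\eta=\log(\alpha'/\alpha)/\log(1/\alpha)$ are inherited directly from Theorem~\ref{GL}; no new constants appear. Accordingly, there is no genuine obstacle at the level of the corollary itself. The substantive difficulty lies entirely in Theorem~\ref{GL}, whose proof is quoted from \cite{GL06} and relies on the Lasota--Yorke style bounds (GL1)--(GL3), the operator-norm estimates (GL4)--(GL6) relating $L_{\delta}$, $L_{0}$ and $\dot L$, and the quasi-compactness they imply for the family on the three-space scale $B_{ss}\subseteq B_{s}\subseteq B_{w}$.
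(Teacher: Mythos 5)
Your proposal is correct and is exactly the paper's route: the paper itself obtains the corollary by the same one-line expansion of $(z-L_0)^{-1}(Id+\delta\dot L(z-L_0)^{-1})$ to get \eqref{1112} and then dividing by $|\delta|$. (As a minor aside, your sign in the rearranged inequality is the right one; the ``$+\delta$'' in the paper's \eqref{1112} is a typo.)
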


We will also make the following assumption on the spaces involved, which is well known to imply together with the  assumptions (GL1) and (GL2), the quasicompactness of the transfer operators $L_\delta$ when acting on $B_s$ and on $B_{ss}$.
\begin{enumerate}
    \item[(GL7)]  $B_{s}$ is compactly
immersed in $B_{w}$ and $B_{ss}$ is compactly immersed in $B_{s}$.
\end{enumerate}

Let $\mathbf{1}$ be the  density representing the indicator function of $S^1$.
Let $\lambda _{0}$ be a simple isolated eigenvalue for $L_{0}$ acting on $B_s$.  Lemma III.3 of \cite{HH} now ensures the existence of a unique eigenfuction   $\varphi _{0}\in B_{s}^{\ast }$  of the adjoint operator $L_{0}^{\ast }:B_{s}^{\ast }\rightarrow B_{s}^{\ast }$ corresponding to  $\lambda _{0}$ $(L_{0}^{\ast }\varphi _{0}=\lambda _{0}\varphi
_{0})$, scaled so that $\varphi _{0}(\mathbf{1})=1$.
Suppose $\lambda _{\delta}$ is an isolated, simple eigenvalue for $L_{\delta}$ acting on $B_s$. Suppose $v_{\delta }$ is an eigenvector   of $L_{\delta }$  associated to $\lambda _{\delta }$.
To quantitatively address the differentiability of $v_{\delta }$ and $%
\lambda _{\delta }$ we need to scale $v_{\delta }$ consistently.
  We will rescale $v_{\delta }$ in a way
that $\varphi _{0}(v_{\delta })=1$ for all $\delta \in \lbrack 0,\overline{%
\delta}]$.
Let us consider a simple eigenvalue
$\lambda_\delta \in \sigma (L_\delta),$ and $\theta >0$ \
such that $$\{|z-\lambda_\delta |\leq \theta \}\cap \sigma (L_\delta)=\{\lambda_\delta \}.$$ The
eigenprojection operator
$\Pi _{\lambda_\delta}:B_{s}\rightarrow B_{s}$ is defined by%
\begin{equation}\label{project3}
\Pi _{\lambda_\delta }:=\frac{1}{2\pi i}\int_{\{|z-\lambda_\delta |=\theta
\}}(z\cdot Id-L_{\delta })^{-1}dz.
\end{equation}
 It is well known that this is a projection ($\Pi _{\lambda_\delta }^{2}=\Pi
_{\lambda_\delta }$) and it does not depend on $\theta$ or on the circle $\{|z-\lambda_\delta |=\theta \}$, which can  be replaced by any smooth simple curve only containing the simple eigenvalue $\lambda_\delta$.
Furthermore $L_{\delta}=\Pi
_{\lambda_\delta }L_{\delta}\Pi _{\lambda_\delta }+(Id-\Pi _{\lambda_\delta})L_{\delta}(Id-\Pi _{\lambda_\delta}) $ and $\sigma (\Pi _{\lambda_\delta }L_{\delta}\Pi _{\lambda_\delta })=\{0,\lambda_\delta \}$.
Hence $\Pi _{\lambda_\delta}$ is a projection on the $\lambda_\delta $-eigenspace of $%
L_{\delta}$.

 We now consider the dependence of the isolated eigenvalue $\lambda_\delta$ on $\delta$.  First we consider the associated eigenvector $v_\delta$ and state a formula for its derivative with respect to $\delta$.

\begin{proposition}
\label{eigdiff}Suppose the family of transfer operators $L_{\delta }$
satisfy the assumptions (GL1),...,(GL7). Suppose $|\lambda _{0}|>\alpha$ (see (GL2)) is a simple isolated eigenvalue for $L_0$ acting on $B_{s}$. 
Then
\begin{enumerate}
    \item[(I)] $\lambda_0$ is also an eigenvalue of the operator applied to $B_{ss}$. Furthermore, for $\delta $  small enough $L_\delta$ has a family of simple isolated  eigenvalues (both for the operator applied to $B_s$ and $B_{ss}$) $\lambda_\delta$ with $\lambda_\delta\to \lambda_0$ as $\delta \to 0$.
    \item[(II)]  each $\lambda_\delta$ has an eigenvector $v_{\delta}\in B_{s}$, rescaled by $\varphi
_{0}(v_{\delta })=1$ as described before, and as $\delta \rightarrow 0$%
\begin{equation}
\lim_{\delta \rightarrow 0}||v_{0}-v_{\delta }||_{s}=0. \label{sstab}
\end{equation}
Further, one has 
\begin{equation}
\frac{v_{\delta }-v_{0}}{\delta }\rightarrow \dot{v}:=\frac{1}{2\pi i}%
\int_{\{|z-\lambda _{0}|=\theta \}}(z-L_{0})^{-1}\dot{L}(z-L_{0})^{-1}v_{0}dz
\end{equation}%
with convergence in $B_{w}$.
Moreover, the function $\delta \rightarrow \lambda _{\delta }$ is differentiable.
\end{enumerate}
\end{proposition}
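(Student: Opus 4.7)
The plan is to proceed in two stages. The first stage (Part (I)) establishes that $\lambda_0$ is also an eigenvalue on $B_{ss}$ and that the perturbed operators $L_\delta$ retain simple isolated eigenvalues $\lambda_\delta$ close to $\lambda_0$. The second stage (Part (II)) exploits the contour-integral representation of the eigenprojection together with Corollary \ref{C15} to extract the asserted limits.

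For Part (I), by (GL1)--(GL2) and (GL7) the operator $L_0$ is quasicompact on both $B_s$ and $B_{ss}$ with essential spectral radius at most $\alpha$. Any $z$ in the $B_s$-resolvent set with $|z|>\alpha$ also lies in the $B_{ss}$-resolvent set, since outside $\{|w|\le\alpha\}$ the $B_{ss}$-spectrum consists only of eigenvalues and every $B_{ss}$-eigenvalue is automatically a $B_s$-eigenvalue. Hence the contour integral
\begin{equation*}
\Pi_{\lambda_0}=\frac{1}{2\pi i}\int_{|z-\lambda_0|=\theta}(z-L_0)^{-1}\,dz
\end{equation*}
agrees on $B_s$ and $B_{ss}$ and maps $B_{ss}$ into $B_{ss}$. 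Using the rank-one decomposition $\Pi_{\lambda_0}(\cdot)=\varphi_0(\cdot)v_0$, together with the density of $B_{ss}$ in $B_s$ and the fact that $\varphi_0\ne 0$, one concludes $v_0\in B_{ss}$, so $\lambda_0$ is an eigenvalue on $B_{ss}$. Theorem \ref{GL} applied uniformly on the compact contour then yields $\|\Pi_{\lambda_\delta}-\Pi_{\lambda_0}\|_{B_{ss}\to B_w}\to 0$; standard perturbation arguments force $\mathrm{rank}(\Pi_{\lambda_\delta})=1$ for small $\delta$, so $L_\delta$ has a unique simple eigenvalue $\lambda_\delta$ inside $\{|z-\lambda_0|<\theta\}$, and since $\theta$ is arbitrary, $\lambda_\delta\to\lambda_0$.

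For Part (II), define $v_\delta:=\Pi_{\lambda_\delta}(v_0)/\varphi_0(\Pi_{\lambda_\delta}(v_0))$, which is an eigenvector of $L_\delta$ at $\lambda_\delta$ normalised so that $\varphi_0(v_\delta)=1$; the denominator converges to $\varphi_0(v_0)=1$, hence stays bounded away from zero. Writing
\begin{equation*}
\frac{\Pi_{\lambda_\delta}(v_0)-\Pi_{\lambda_0}(v_0)}{\delta}=\frac{1}{2\pi i}\int_{|z-\lambda_0|=\theta}\frac{(z-L_\delta)^{-1}-(z-L_0)^{-1}}{\delta}v_0\,dz,
\end{equation*}
and applying Corollary \ref{C15} pointwise on the contour (this uses $v_0\in B_{ss}$, just proved), we obtain convergence of this quantity to $\dot v$ in $B_w$. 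A short bookkeeping of the scalar denominator then gives $(v_\delta-v_0)/\delta\to\dot v$ in $B_w$. For the $B_s$-convergence \eqref{sstab}, uniform bounds $\|(z-L_\delta)^{-1}\|_{B_{ss}\to B_{ss}}\le C$ on the contour (from (GL2)--(GL3)) produce uniform bounds $\|v_\delta\|_{ss}\le C$; the compact embedding $B_{ss}\hookrightarrow B_s$ from (GL7) then makes $\{v_\delta\}$ relatively compact in $B_s$, and any cluster point must equal $v_0$ by the $B_w$-convergence already established. Finally, applying $\varphi_0$ to $L_\delta v_\delta=\lambda_\delta v_\delta$ and using $\varphi_0\circ L_0=\lambda_0\varphi_0$ yields
\begin{equation*}
\lambda_\delta-\lambda_0=\varphi_0\bigl((L_\delta-L_0)v_\delta\bigr);
\end{equation*}
dividing by $\delta$, splitting $(L_\delta-L_0)v_\delta=(L_\delta-L_0)v_0+(L_\delta-L_0)(v_\delta-v_0)$, and invoking (GL4)--(GL5) together with $\|v_\delta-v_0\|_s\to 0$, one concludes $(\lambda_\delta-\lambda_0)/\delta\to\varphi_0(\dot L v_0)$, so $\delta\mapsto\lambda_\delta$ is differentiable at $0$.

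The main obstacle will be promoting the $B_w$-convergence provided by Corollary \ref{C15} to the $B_s$-convergence \eqref{sstab} required for the eigenvector: the argument relies on first producing uniform $B_{ss}$-bounds for the family $\{v_\delta\}$ from the uniform resolvent estimates on $B_{ss}$, and then invoking the compact embedding (GL7) to upgrade $B_w$-convergence of any subsequence to $B_s$-convergence via a cluster-point argument. Without this step, the eigenvalue differentiability argument breaks down, since the error term $\varphi_0((L_\delta-L_0)(v_\delta-v_0))/\delta$ requires $\|v_\delta-v_0\|_s\to 0$ to vanish in the limit.
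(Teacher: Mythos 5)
Your proof follows the paper's architecture for Part (I) and the first half of Part (II): coincidence of the point spectrum on $B_s$ and $B_{ss}$ outside the essential spectral radius (this is Lemma A.3 of \cite{Ba3}, which the paper cites), the normalisation $v_\delta=\Pi_{\lambda_\delta}(v_0)/\varphi_0(\Pi_{\lambda_\delta}(v_0))$, and Corollary \ref{C15} integrated over the contour to get $(v_\delta-v_0)/\delta\to\dot v$ in $B_w$. Your route to \eqref{sstab} (uniform $B_{ss}$-bounds, compact embedding, cluster-point argument) differs from the paper, which reads the $B_s$-convergence directly off the Keller--Liverani estimate $\|\Pi_{\lambda_\delta}-\Pi_{\lambda_0}\|_{B_{ss}\to B_s}\to 0$; both work, but note that rank preservation of $\Pi_{\lambda_\delta}$ and uniform resolvent bounds on the contour in this two-norm setting are themselves consequences of \cite{KL}, not of classical single-norm perturbation theory, so ``standard perturbation arguments'' must be read as an appeal to that theorem.

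The genuine gap is in the final step. From $\lambda_\delta-\lambda_0=\varphi_0((L_\delta-L_0)v_\delta)$ you split off the error term $\delta^{-1}\varphi_0((L_\delta-L_0)(v_\delta-v_0))$ and claim it vanishes ``by (GL4)--(GL5) together with $\|v_\delta-v_0\|_s\to 0$''. But $\varphi_0\in B_s^{\ast}$, so you must control the \emph{strong} norm of $(L_\delta-L_0)(v_\delta-v_0)$; (GL4) gives only $\|(L_\delta-L_0)(v_\delta-v_0)\|_s\le C\delta\|v_\delta-v_0\|_{ss}$, and $\|v_\delta-v_0\|_{ss}$ is merely bounded, not small, so after dividing by $\delta$ the term is $O(1)$, not $o(1)$. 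The same issue afflicts the leading term: (GL6) gives $\delta^{-1}(L_\delta-L_0)v_0\to\dot L v_0$ only in $B_w$, which does not let you pass the limit through $\varphi_0\in B_s^{\ast}$; the formula $\dot\lambda=\varphi_0(\dot L v_0)$ is proved in the paper's Proposition \ref{lineig1} under the \emph{additional} hypothesis \eqref{resex2} of convergence in $B_s$, which is not among (GL1)--(GL7). The repair used in Proposition \ref{lineig1} is to write $\varphi_0(g)=\varphi_0(\lambda_0^{-n}L_0^{n}g)$ and apply the Lasota--Yorke inequality (GL2), trading the strong norm for $C\alpha^{n}\cdot(\text{bounded})+C\cdot(\text{weak norm}=o(\delta))$, choosing $n$ first and then $\delta$. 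Alternatively, for the bare differentiability claim of this proposition the paper compares the four terms of the identity $L_\delta\frac{v_\delta-v_0}{\delta}+\frac{L_\delta-L_0}{\delta}v_0=\lambda_\delta\frac{v_\delta-v_0}{\delta}+v_0\frac{\lambda_\delta-\lambda_0}{\delta}$ in $B_w$ and deduces convergence of the scalar without identifying the limit. As written, your last paragraph does not establish differentiability.
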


\begin{proof}
Lemma A.3 of \cite{Ba3} (following a similar result proved in \cite{BT})
implies that if (i) $B_s$ is separable and $L:B_s\rightarrow B_s$ is a continuous
linear map preserving a dense continuously embedded subspace $B_{ss}$, (ii) $L:B_{ss}\rightarrow
B_{ss}$ is continuous and (iii) the essential spectral radius of $L$
considered both as acting on $B_s$ and on $B_{ss}$ is
bounded by $0<\rho <1$, then the simple eigenvalues of $L:B_s
\rightarrow B_s$ and $L:B_{ss}\rightarrow B_{ss}$
in $\{z\in \mathbb{C}|~||z||>\rho \}$ coincide, the associated
eigenspaces also coincide and are contained in $B_{ss}$.
 The assumptions (GL1),...,(GL3) and (GL7) imply that the essential spectral radius is bounded by $\alpha$ (see e.g. Lemma 2.2 of \cite{BGK}). The assumptions (GL1),...,(GL3) also imply the required continuity properties for the operators acting on $B_s$ and $B_{ss}$, thus we can apply Lemma A.3 of \cite{Ba3} and establish that $\lambda_0$ is a simple eigenvalue of $L_0$ applied on $B_{ss}$ and the associated eigenspace is generated by an eigenvector $v_0\in B_{ss}$.
As  a classical consequence of the assumptions (GL1),...,(GL4) and (GL7)  the spectral stability theorem of \cite{KL}
establishes that for $\delta $ small enough, $\lambda
_{\delta }$ is simple and the associated eigenspaces are generated by $v_\delta \in B_{s}$. Furthermore one has that $\lim_{\delta \rightarrow 0}|\lambda
_{\delta }-\lambda _{0}|\rightarrow 0$ and $||\Pi _{\lambda _{0}}-\Pi
_{\lambda _{\delta } }||_{B_{ss}\rightarrow B_{s}}\rightarrow 0$. Since $\Pi _{\lambda
_{0}}(v_{0})=v_{0}$, $\varphi _{0}(\Pi _{\lambda _{0}}(v_{0}))=1$ and $%
||\Pi _{\lambda _{0}}(v_{0})-\Pi _{\lambda _{\delta }
}(v_{0})||_{s}\rightarrow 0$,
then $v_{\delta }=\frac{\Pi _{\lambda _{\delta
} }(v_{0})}{\varphi _{0}(\Pi _{\lambda _{\delta } }(v_{0}))}$
varies continuously in $B_s$ at $\delta =0$.
This takes care of part (I) and part (II) up to equation (\ref{sstab}).

For the remainder of part (II), putting together $(\ref{1112})$ and \eqref{project3} we have

\begin{eqnarray*}
\Pi _{\lambda _{\delta }} &=&\frac{1}{2\pi i}\int_{\{|z-\lambda
_{\delta }|=\theta \}}(z-L_{\delta })^{-1}dz \\
&=&\frac{1}{2\pi i}\int_{\{|z-\lambda _{\delta }|=\theta
\}}(z-L_{0})^{-1}+\delta (z-L_{0})^{-1}\dot{L}(z-L_{0})^{-1}+O_{\delta } \ dz
\end{eqnarray*}
where $O_{\delta }$ represents an operator such that there is a constant $\tilde{C}$ so that $\frac{||O_{\delta }||_{B_{ss}\rightarrow B_{w}}}{\delta }\le \tilde{C}|\delta|^\eta$
for sufficiently small $\delta$. \ Thus, since for $\delta $ small enough, $\{|z-\lambda _{\delta }|=\theta \}$ also encircles $\lambda_0$
\begin{eqnarray*}
\Pi _{\lambda _{\delta }} &=&\Pi _{\lambda _{0 }}+\delta
\frac{1}{2\pi i}\int_{\{|z-\lambda _{\delta }|=\theta \}}(z-L_{0})^{-1}\dot{L%
}(z-L_{0})^{-1}dz+
\frac{1}{2\pi i}\int_{\{|z-\lambda |=\theta
\}}O_\delta\ dz \\
&=&\Pi _{\lambda _{0 }}+\delta \frac{1}{2\pi i}\int_{\{|z-\lambda
_{\delta }|=\theta \}}(z-L_{0})^{-1}\dot{L}(z-L_{0})^{-1}dz+O_{2,\delta}
\end{eqnarray*}%
where $\frac{||O_{2,\delta}||_{B_{ss}\rightarrow B_{w}}}{\delta }%
\rightarrow 0$ as $\delta \rightarrow 0$.

Since, as proved above, $v_0\in B_{ss}$, we then have
\begin{eqnarray*}
v_{\delta } &=&\frac{\Pi _{\lambda _{\delta } }(v_{0})}{\varphi
_{0}(\Pi _{\lambda _{\delta } }(v_{0}))} \\
&=&\frac{\Pi _{\lambda _{0 }}(v_{0})+\delta \lbrack \frac{1}{2\pi i}%
\int_{\{|z-\lambda _{\delta }|=\theta \}}(z-L_{0})^{-1}\dot{L}%
(z-L_{0})^{-1}v_{0}dz]+q_{\delta }}{\varphi _{0}(\Pi _{\lambda _{\delta
}}(v_{0}))} \\
&=&\frac{v_{0}+\delta \lbrack \frac{1}{2\pi i}\int_{\{|z-\lambda _{\delta
}|=\theta \}}(z-L_{0})^{-1}\dot{L}(z-L_{0})^{-1}v_{0}dz]+q_{\delta }}{\varphi _{0}(\Pi _{\lambda _{\delta
} }(v_{0}))}
\end{eqnarray*}%

where $\frac{||q_{\delta }||_{w}}{\delta }\rightarrow 0$ and using the fact $||\Pi _{\lambda _{0}}-\Pi
_{\lambda _{\delta }}||_{B_{ss}\rightarrow B_{s}}\rightarrow 0$, one has ${\varphi _{0}(\Pi _{\lambda _{\delta
}}(v_{0}))}\to 1$.
Since $%
\lim_{\delta \rightarrow 0}|\lambda _{\delta }-\lambda _{0}|\rightarrow 0$ we have for every fixed $\theta$ small enough \[\lim_{\delta \to 0}\int_{\{|z-\lambda _{\delta }|=\theta \}}(z-L_{0})^{-1}\dot{L}%
(z-L_{0})^{-1}v_{0}dz=\int_{\{|z-\lambda _{0}|=\theta \}}(z-L_{0})^{-1}\dot{L%
}(z-L_{0})^{-1}v_{0}dz.\]
 We then get:%
\begin{equation*}
\frac{v_{\delta }-v_{0}}{\delta }\rightarrow \frac{1}{2\pi i}%
\int_{\{|z-\lambda _{0}|=\theta \}}(z-L_{0})^{-1}\dot{L}(z-L_{0})^{-1}v_{0}dz
\end{equation*}%
in the $B_{w}$ topology.

For the differentiability of $\delta \mapsto \lambda_{\delta}$, let us
consider the normalised eigenvector $v_{\delta }$ as used before. Using that $L_\delta(v_{\delta })=\lambda _{\delta
}v_{\delta }$ we  get%
\begin{equation*}
\frac{L_{\delta }v_{\delta }-L_{0}v_{0}}{\delta }=\frac{\lambda _{\delta
}v_{\delta }-\lambda _{0}v_{0}}{\delta }.
\end{equation*}%
%
Thus
\begin{equation} \label{eq:dereig}
L_{\delta }\frac{v_{\delta }-v_{0}}{\delta }+\frac{L_{\delta }-L_{0}}{\delta
}v_{0}=\lambda_\delta \frac{v_\delta}{\delta} -\lambda_0\frac{v_0}{\delta}=\lambda _{\delta }\frac{v_{\delta }-v_{0}}{\delta }+v_{0}\frac{%
\lambda _{\delta }-\lambda _{0}}{\delta }.
\end{equation}%
To conclude we will argue that both terms on the LHS of \eqref{eq:dereig} and the first term on the RHS of \eqref{eq:dereig} converge in the $B_w$ topology;  this implies that the second term on the RHS of \eqref{eq:dereig} also converges, yielding differentiability of $\delta\mapsto\lambda_\delta$.
Regarding the first term on the LHS of \eqref{eq:dereig}, by (GL4) and \eqref{sstab} we have that $\lim_{\delta \to 0}||L_{\delta }\frac{v_{\delta }-v_{0}}{\delta }-L_{0 }\frac{v_{\delta }-v_{0}}{\delta }||_w=0$;  thus we may replace $L_\delta$ with $L_0$ in this term.
Furthermore, $\frac{L_{\delta }-L_{0}}{\delta
}v_{0}$ converges in $B_w$ by (GL6) and
$\frac{v_{\delta }-v_{0}}{\delta }$ converges in $B_w$ as proved above.
Comparing the LHS and RHS of \eqref{eq:dereig} we see that $\lim_{\delta\to 0}\frac{\lambda _{\delta }-\lambda _{0}}{\delta }$ exists.
\end{proof}

\subsection{An abstract formula for the linear response of the spectrum}
Proposition \ref{lineig1} constructs the abstract formula for the derivative of the eigenvalue.
\begin{proposition}
\label{lineig1}
Suppose the family of transfer operators $L_{\delta }$
satisfy the assumptions (GL1),...,(GL7). 
Suppose $\lambda _{0}$  with $|\lambda _{0}|>\alpha $ is an isolated
eigenvalue for $L_0$ acting on $B_s$ and  that there is $\dot{L}v_{0}\in B_{s}$ such that%
\begin{equation}\label{resex2}
\lim_{\delta \rightarrow 0}\left\|\frac{L_{\delta }-L_{0}}{\delta }v_{0}-\dot{L}%
v_{0}\right\|_{s}=0.
\end{equation}
(as in \ref{resex}).

Then we have the following response formula for the simple isolated eigenvalue
\begin{equation}
\label{dotlameqn}
\dot{\lambda}:=\lim_{\delta \rightarrow 0}\frac{\lambda _{\delta }-\lambda
_{0}}{\delta }=\varphi _{0}(\dot{L}v_{0}).
\end{equation}
\end{proposition}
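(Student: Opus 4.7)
Invoke Proposition \ref{eigdiff}: the map $\delta\mapsto\lambda_\delta$ is differentiable, $\|v_\delta-v_0\|_s\to 0$ (equation \eqref{sstab}), and $u_\delta:=(v_\delta-v_0)/\delta$ converges in $B_w$ to
$\dot v = \frac{1}{2\pi i}\oint_{\{|z-\lambda_0|=\theta\}}(z-L_0)^{-1}\dot L(z-L_0)^{-1}v_0\,dz$;
denote the limit in \eqref{dotlameqn} by $\dot\lambda$. The plan is to pass to the limit in the algebraic identity \eqref{eq:dereig} to obtain the ``formal derivative'' of the eigenequation, then upgrade the regularity of $\dot v$ to $B_s$, and finally test against $\varphi_0\in B_s^*$.

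First I would take the $B_w$-limit in
$L_\delta u_\delta + \tfrac{L_\delta-L_0}{\delta}v_0 = \lambda_\delta u_\delta + v_0\,\tfrac{\lambda_\delta-\lambda_0}{\delta}$.
Splitting $L_\delta u_\delta = L_0 u_\delta + (L_\delta-L_0)u_\delta$, the remainder is controlled by $\|(L_\delta-L_0)u_\delta\|_w\le C\delta\|u_\delta\|_s = C\|v_\delta-v_0\|_s\to 0$ via (GL4) and \eqref{sstab}, while $L_0 u_\delta\to L_0\dot v$ in $B_w$ by continuity of $L_0$ on $B_w$ (from (GL1)). The second term on the left converges in $B_s$ to $\dot L v_0$ by assumption \eqref{resex2}, and the remaining terms converge trivially. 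This produces the limiting identity
$L_0\dot v + \dot L v_0 = \lambda_0\dot v + \dot\lambda\, v_0$
in $B_w$.

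Next I would upgrade $\dot v$ to $B_s$. The key algebraic observation is that $L_0 v_0=\lambda_0 v_0$ gives $(z-L_0)^{-1}v_0 = v_0/(z-\lambda_0)$ for every $z\neq\lambda_0$, so the contour integral collapses to
$\dot v = \frac{1}{2\pi i}\oint\frac{(z-L_0)^{-1}\dot L v_0}{z-\lambda_0}\,dz$.
By (GL5) (or \eqref{resex2}) we have $\dot L v_0\in B_s$, and since $\lambda_0$ is isolated in the spectrum of $L_0$ on $B_s$, the resolvent $(z-L_0)^{-1}:B_s\to B_s$ is uniformly bounded on the contour $\{|z-\lambda_0|=\theta\}$ for $\theta$ small enough. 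The integrand is therefore a continuous $B_s$-valued function, and its Bochner integral $\dot v$ lies in $B_s$. In particular $L_0\dot v\in B_s$, so the limiting identity is valid in $B_s$. Applying $\varphi_0\in B_s^*$ to both sides and using $L_0^*\varphi_0=\lambda_0\varphi_0$ to cancel the $\lambda_0\varphi_0(\dot v)$ on each side, together with $\varphi_0(v_0)=1$, yields the response formula $\dot\lambda = \varphi_0(\dot L v_0)$.

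I expect the regularity upgrade of $\dot v$ in the second step to be the main obstacle. The more direct route of testing \eqref{eq:dereig} against $\varphi_0$ at each $\delta$ and passing to the limit produces an error term $\varphi_0\bigl((L_\delta-L_0)(v_\delta-v_0)/\delta\bigr)$ that is only controlled in $B_w$ by the available estimates; closing that route in $B_s$ would demand a rate $\|v_\delta-v_0\|_{ss}=O(\delta)$ which is not among the hypotheses. The contour representation circumvents this by exploiting the eigenvector identity $(z-L_0)^{-1}v_0=v_0/(z-\lambda_0)$, after which $B_s$-regularity of the integrand is automatic.
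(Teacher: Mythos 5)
Your argument is correct, but it is not the route the paper takes --- and, notably, the route you dismiss in your final paragraph is precisely the paper's. Your version differentiates the eigenvalue equation, passes to the $B_w$-limit in \eqref{eq:dereig}, upgrades $\dot v$ to $B_s$ by collapsing the contour integral via $(z-L_0)^{-1}v_0=v_0/(z-\lambda_0)$, and then pairs with $\varphi_0$; every step checks out (in particular $(z-L_0)^{-1}$ is uniformly bounded on $B_s$ along the contour because $\lambda_0$ is isolated in $\sigma_s(L_0)$, and the cancellation $\varphi_0(L_0\dot v)=\lambda_0\varphi_0(\dot v)$ makes the final formula independent of the precise value of $\dot v$, so only $\dot v\in B_s$ matters). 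The paper instead estimates $\left|\frac{\lambda_\delta-\lambda_0}{\delta}-\varphi_0(\dot L v_0)\right|$ directly, reducing it (via $\varphi_0(v_\delta)=\varphi_0(v_0)=1$ and $L_0^*\varphi_0=\lambda_0\varphi_0$) to exactly the error term $\delta^{-1}\varphi_0([L_\delta-L_0](v_\delta-v_0))$ that you flag as problematic. That term is killed \emph{without} any rate $\|v_\delta-v_0\|_{ss}=O(\delta)$: one inserts $\varphi_0=\lambda_0^{-n}(L_0^*)^n\varphi_0$ and applies the Lasota--Yorke inequality (GL2) to $L_0^n\bigl([L_\delta-L_0](v_\delta-v_0)\bigr)$, so that the strong norm of the bracket --- which is $O(\delta)$ by (GL4) once one knows only that $\|v_\delta-v_0\|_{ss}$ is uniformly \emph{bounded}, a fact extracted from the eigenvector equation and (GL2) --- carries the damping factor $\alpha^n$, while the weak norm is $o(\delta)$ by (GL4) and \eqref{sstab}; choosing $n$ large and then $\delta$ small closes the direct route. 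What your approach buys is a cleaner, classical perturbation-theoretic derivation plus the by-product $\dot v\in B_s$ (an upgrade on the $B_w$-convergence stated in Proposition \ref{eigdiff}); what the paper's approach buys is that it never needs the contour representation or any regularity of $\dot v$ at all, and it re-derives the existence of the limit $\dot\lambda$ rather than importing differentiability from Proposition \ref{eigdiff}.
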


\begin{proof}
We prove $\lim_{\delta \rightarrow 0}\left\vert \frac{\lambda _{\delta
}-\lambda _{0}}{\delta }-\varphi _{0}(\dot{L}v_{0})\right\vert =0.$
Write
$$
\left\vert \frac{\lambda _{\delta
}-\lambda _{0}}{\delta }-\varphi _{0}(\dot{L}v_{0})\right\vert
\leq \left\vert \frac{\lambda _{\delta
}-\lambda _{0}}{\delta }-\frac{\varphi _{0}(L_{\delta }-L_{0})(v_{0})}{%
\delta }\right\vert +\left\vert \frac{\varphi _{0}(L_{\delta }-L_{0})(v_{0})%
}{\delta }-\varphi _{0}(\dot{L}v_{0})\right\vert.
$$

By the assumption (\ref{resex2}) 
and the fact that $\varphi _{0}\in
B_{s}^{\ast }$  we have $\left\vert \frac{\varphi _{0}(L_{\delta }-L_{0})(v_{0})%
}{\delta }-\varphi _{0}(\dot{L}v_{0})\right\vert \rightarrow 0.$

Let us now consider $\lim_{\delta \rightarrow 0}\left\vert \frac{\lambda
_{\delta }-\lambda _{0}}{\delta }-\frac{\varphi _{0}(L_{\delta
}-L_{0})(v_{0})}{\delta }\right\vert .$ Recalling that by the normalizations introduced before (see the first paragraphs after Corollary \ref{C15}) 
we have $\varphi_0(v_\delta)=1$ for $\delta\in [0,\overline\delta]$, and $\varphi_0(L_0(v_0 ))=\lambda_0\varphi_0(v_0 )=\lambda_0$  and $\varphi_0(L_0(v_\delta ))=\lambda_0$.
We then can write

\begin{eqnarray}
\left\vert \frac{\lambda _{\delta }-\lambda _{0}}{\delta }-\frac{\varphi
_{0}(L_{\delta }-L_{0})(v_{0})}{\delta }\right\vert  &=&\left\vert \frac{%
\lambda _{\delta }-\lambda _{0}}{\delta }-\frac{\varphi _{0}(L_{\delta
}(v_{0}))-\lambda _{0}}{\delta }\right\vert   \notag  \label{diff} \\
&=&\left\vert \frac{\lambda _{\delta }-\varphi _{0}(L_{\delta }(v_{0}))}{%
\delta }\right\vert   \notag \\
&=&\left\vert \frac{\varphi _{0}(L_{\delta }(v_{\delta }))-\varphi
_{0}(L_{\delta }(v_{0}))-[\varphi _{0}(L_{0}(v_{\delta }))-\varphi
_{0}(L_{0}(v_{0}))]}{\delta }\right\vert   \notag \\
&=&\left\vert \frac{\varphi _{0}([L_{\delta }-L_{0}](v_{\delta }-v_{0}))}{%
\delta }\right\vert .
\end{eqnarray}

\bigskip 
We remark that since $\varphi _{0}$ is a left eigenvector 
\begin{equation}
\varphi _{0}([L_{\delta }-L_{0}](v_{\delta }-v_{0}))=\varphi _{0}(\lambda
_{0}^{-n}L_{0}^{n}([L_{\delta }-L_{0}](v_{\delta }-v_{0})))
\label{boundcomp0}\end{equation}

and by (GL2)
\begin{equation}
||L_{0}^{n}([L_{\delta }-L_{0}](v_{\delta }-v_{0}))||_{s}\leq C\alpha
^{n}||[L_{\delta }-L_{0}](v_{\delta }-v_{0})||_{s}+C||[L_{\delta
}-L_{0}](v_{\delta }-v_{0})||_{w}.
\label{boundcomp}
\end{equation}

We are now going to estimate $||[L_{\delta }-L_{0}](v_{\delta }-v_{0})||_{s}$
and $||[L_{\delta }-L_{0}](v_{\delta }-v_{0})||_{w}.$

We remark that $\forall \delta \in \lbrack 0,\overline{\delta })$ and $n\geq
0$ by the fact that $v_{\delta }$ is an eigenvector, and by (GL2) 
\begin{equation*}
||v_{\delta }||_{ss}=||\lambda _{\delta }^{-n}L_{\delta }^{n}(v_{\delta
})||_{ss}\leq C|\lambda _{\delta }^{-n}|\alpha ^{n}||v_{\delta
}||_{ss}+C|\lambda _{\delta }^{-n}|~||v_{\delta }||_{s}.
\end{equation*}%
Since we have $\lambda _{\delta }\rightarrow \lambda _{0}$ and $\alpha
<|\lambda _{0}|$ we can  choose $\hat{\delta}< \overline{\delta }$ \ and  
$n$ such that $C|\lambda _{\delta }^{-n}|\alpha ^{n}<\frac{1}{2}$ and $%
|\lambda _{\delta }^{-n}|<\alpha ^{-n}$ for each $\delta \in \lbrack 0,\hat{%
\delta}).$ 

We then have that  for this choice of $n$, $\forall \delta \in \lbrack 0,\hat{\delta})$%
\begin{equation*}
||v_{\delta }||_{ss}\leq 2C\alpha ^{-n}||v_{\delta }||_{s}
\end{equation*}%
and since by \eqref{sstab} $||v_{\delta }||_{s}$ is uniformly bounded, then also
$||v_{\delta }||_{ss}$ is uniformly bounded for $\delta \in \lbrack
0,\hat{\delta})$. By this $||v_{\delta }-v_{0}||_{ss}$ is also uniformly
bounded, thus by (GL4)%
\begin{equation}
||[L_{\delta }-L_{0}](v_{\delta }-v_{0})||_{s}\leq \delta C\sup_{\delta \in
\lbrack 0,\hat{\delta})}||v_{\delta }-v_{0}||_{ss}. \label{0st}
\end{equation}%

Again, by (GL4)%
\begin{equation*}
||[L_{\delta }-L_{0}](v_{\delta }-v_{0})||_{w}\leq \delta C||v_{\delta
}-v_{0}||_{s}
\end{equation*}%
and since $||v_{\delta }-v_{0}||_{s}\rightarrow 0$ we have that 
\begin{equation}\label{eq46}
    \delta
^{-1}||[L_{\delta }-L_{0}](v_{\delta }-v_{0})||_{w}\rightarrow 0.
\end{equation}

Let us fix some $\epsilon >0,$  by $(\ref{0st}),$ \ there is   $n$ such that $\forall $ $\delta \in
\lbrack 0,\hat{\delta})$
\begin{equation*}
C\lambda _{0}^{-n}\alpha ^{n}||[L_{\delta }-L_{0}](v_{\delta
}-v_{0})||_{s}\leq \delta \frac{\epsilon }{2}.
\end{equation*}

Once $n$ is fixed, by \eqref{eq46}  there is  $\hat{\delta}^{\prime
}\leq \hat{\delta}$ such that $\forall $ $\delta \in \lbrack 0,\hat{\delta}%
^{\prime })$%
\begin{equation}
C\lambda_0^{-n}||[L_{\delta }-L_{0}](v_{\delta }-v_{0})||_{w}\leq \delta \frac{\epsilon }{2}%
\label{1st}.
\end{equation}

Now we are ready to prove that $\frac{|\varphi _{0}([L_{\delta
}-L_{0}](v_{\delta }-v_{0}))|}{\delta }\overset{\delta \rightarrow 0}{%
\rightarrow }0.$ By $(\ref{boundcomp0})$ and \eqref{boundcomp} we can bound $\frac{|\varphi
_{0}([L_{\delta }-L_{0}](v_{\delta }-v_{0}))|}{\delta }$ as follows: choosing $n$ as above, for each $\delta \in
\lbrack 0,\hat{\delta}^{\prime })$ we get 
\begin{eqnarray*}
\delta ^{-1}|\varphi _{0}([L_{\delta }-L_{0}](v_{\delta }-v_{0}))| &=&\delta
^{-1}|\varphi _{0}(\lambda _{0}^{-n}L_{0}^{n}([L_{\delta }-L_{0}](v_{\delta
}-v_{0})))| \\
&\leq &\delta^{-1}||\varphi _{0}||_{B_{s}^{\ast }}||\lambda
_{0}^{-n}L_{0}^{n}([L_{\delta }-L_{0}](v_{\delta }-v_{0}))||_{s} \\
&\leq &||\varphi _{0}||_{B_{s}^{\ast }}\delta ^{-1}[\delta \frac{\epsilon }{2%
}+\delta \frac{\epsilon }{2}] \\
&\leq &\epsilon ||\varphi _{0}||_{B_{s}^{\ast }} 
\end{eqnarray*}
which, since $\epsilon$ is arbitrary,  can be made as small as wanted as $\delta \rightarrow 0,$ proving the
statement.

\end{proof}

\subsection{Verifying abstract transfer operator conditions using properties of expanding maps}

In this section we develop more explicit estimates that allow us to apply the above abstract theory to expanding maps and suitable perturbations  (verifying (A0),(A1),(A2)).
This will lead to the proof of Proposition \ref{Prop:LRmain}.
In the following we will then consider as a stronger and weaker spaces $B_{ss},B_{s},B_{w}$ considered above, the spaces of Borel densities  in a Sobolev space $W^{3,1},W^{1,1},L^1.$ {\color{red}}
The transfer and derivative operators associated to expanding maps will be  defined as acting on  densities as in Definition \ref{1}.
In the following we will recall some basic facts on the properties of such operators. 
\subsubsection{Uniform estimates for individual maps}
\label{sec:unifestmap}
Assumptions (GL1)--(GL3) require the transfer operators to satisfy uniform norm and Lasota--Yorke estimates. These hypotheses will hold when the operators are associated to a uniform family of maps.

\begin{definition}
\label{ufm}A set $U_{M,N}$ of expanding maps of $S^1$ is called a \emph{uniform C}$%
^{k}$\emph{\ family} with parameters $M\geq 0$ and $N>1$  if it satisfies
uniformly the expansiveness and regularity condition: $\forall T\in U_{M,N}$%
\begin{equation*}
||T||_{C^{k}}\leq M,~\inf_{x\in S^{1}}|T^{\prime }(x)|\geq N.
\end{equation*}
\end{definition}

It is well known that the transfer operator associated to a smooth expanding
map has some regularization properties when acting on suitable Sobolev
spaces (see e.g.\ \cite{L2} and \cite{GS}). This is expessed in the following Lemma.

\begin{lemma}[\cite{L2}, Section 1.5;  \cite{GS}, Lemma 29]
\label{Lemsu} Let $U_{M,N}$ be a uniform $C^{k}$ family of expanding maps of $S^1$. 
The transfer operators $L_{T}$ associated to each $T\in U_{M,N}$ satisfy a
uniform Lasota-Yorke inequality on $W^{i,1}(S^{1})$: let  $\alpha=N^{-1} <1$.  For each $
1\leq i\leq k-1$ there are $A_{i},~B_{i}\geq 0$ such that for
each $n\geq 0,$ $T\in U_{M,N}$
\begin{eqnarray}
\|L_{T}^{n}\|_{W^{i-1,1}} &\leq &A_{i}\\
\label{unifLY}||L_{T}^{n}f\Vert _{W^{i,1}} &\leq &\alpha ^{ni}\Vert f\Vert
_{W^{i,1}}+B_{i}\Vert f\Vert _{W^{i-1,1}}\qquad\mbox{ for all $f\in W^{i,1}$.}
\end{eqnarray}
\end{lemma}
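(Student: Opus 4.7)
The plan is to prove both bounds together by induction on $i$, relying on the standard commutation between differentiation and the transfer operator, together with the expansivity condition $\inf_x|T'(x)|\geq N=1/\alpha$. The base case $i=1$ is well known and amounts to computing $(L_T f)'$ by differentiating $L_Tf(x)=\sum_j f(y_j(x))/T'(y_j(x))$ through the inverse branches $y_j$, which satisfy $y_j'(x)=1/T'(y_j(x))$.

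First I would establish the one-step inequality for $L_T$. A direct computation via the inverse branches yields a Leibniz-type commutation identity
\[
(L_T f)^{(i)}=L_T\!\left(\frac{f^{(i)}}{(T')^i}\right)+\sum_{k=0}^{i-1} L_T\bigl(P_{i,k}[T]\cdot f^{(k)}\bigr),
\]
where each coefficient $P_{i,k}[T]$ is a universal polynomial in $1/T',T'',\dots,T^{(i+1)}$, hence uniformly bounded in sup norm by a constant $C_{i}=C_i(M,N)$ on $U_{M,N}$. Taking $L^1$-norms, using that $L_T$ is a contraction on $L^1$ (it is Markov) and that $\|1/(T')^i\|_\infty\leq \alpha^i$, one gets
\[
\|(L_Tf)^{(i)}\|_{L^1}\leq \alpha^i\|f^{(i)}\|_{L^1}+C_i\|f\|_{W^{i-1,1}}.
\]
Adding the analogous (and easier) bounds for the lower-order derivatives $\|(L_Tf)^{(k)}\|_{L^1}$, $k<i$, which by induction are all controlled by $\|f\|_{W^{i-1,1}}$, gives a one-step Lasota--Yorke inequality $\|L_Tf\|_{W^{i,1}}\leq \alpha^{i}\|f\|_{W^{i,1}}+B_i'\|f\|_{W^{i-1,1}}$ with $B_i'$ depending only on $M,N$. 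This also produces the uniform bound $\|L_T\|_{W^{i-1,1}\to W^{i-1,1}}\leq A_i$ for the inductive step.

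To obtain the bound for $L_T^n$, I would iterate: since $L_T^n$ is the transfer operator of $T^n$, and the composition of two maps in $U_{M,N}$ still satisfies $\inf|(T^n)'|\geq N^n$ (with its $C^k$-norm controlled by $M,N,k,n$), applying the one-step inequality to $T^n$ directly produces $\|L_T^nf\|_{W^{i,1}}\leq \alpha^{ni}\|f\|_{W^{i,1}}+B_i\|f\|_{W^{i-1,1}}$. Alternatively, iterating the one-step inequality $n$ times and bounding the resulting geometric series by $\sum_{k\geq 0}\alpha^{ki}A_i^{\,?}<\infty$ (using $\alpha^i<1$) gives a constant $B_i$ that is uniform in $n$. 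The main obstacle is purely bookkeeping: controlling the combinatorial proliferation of lower-order terms $P_{i,k}[T]$ in the commutation identity. This is harmless because all such terms get absorbed into the $\|f\|_{W^{i-1,1}}$ part of the inequality, and uniformity over $U_{M,N}$ follows because every constant appearing in the chain of estimates depends only on $M$ and $N$ through sup bounds of $1/T'$ and the derivatives $T',T'',\dots,T^{(k)}$.
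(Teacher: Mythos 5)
The paper does not prove this lemma; it is quoted as a known result from \cite{L2} and \cite{GS}, and the only trace of the argument in the paper is the $i=1$ commutation identity \eqref{preLY} used in the proof of Proposition \ref{prop14}. Your proposal is exactly the standard proof from those references (differentiate through the inverse branches, extract the leading term $L_T(f^{(i)}/(T')^i)$ contracting by $\alpha^i$, absorb the lower-order Leibniz terms into $\|f\|_{W^{i-1,1}}$, then iterate), and it is correct. One small caution: of your two routes to the $n$-step bound, only the iteration route gives a $B_i$ uniform in $n$ — applying the one-step inequality directly to $T^n$ yields a constant controlled by $\|T^n\|_{C^k}$, which grows with $n$ — but since you supply the iteration argument with the geometric series and the inductively obtained uniform bound $\|L_T^m\|_{W^{i-1,1}}\leq A_i$, the proof goes through.
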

Lemma \ref{Lemsu} allow us to establish that the transfer operators associated to expanding maps satisfy the assumptions (GL1),...,(GL3).
These properties, along with the compact immersion of \ $W^{k,1}$ in $%
W^{k-1,1}$ allow one to  classically deduce that the transfer operator $L_{0}$ of a
$C^{k}$ expanding map $T$ is \emph{quasi-compact} on each $W^{i,1}(S%
^{1}),$ with $1\leq i\leq k-1$. Furthermore, by topological transitivity of
expanding maps, $1$ is the only eigenvalue on the unit circle.
All of the above leads to the following classical result; see e.g. \cite{L2}, Section 3 or \cite{GS} Proposition 30.
Define the spaces $V_{i}:=\{g\in
W^{i,1}(S^{1})~s.t.~\int_{S^{1}}g~dm=0\}$ for $i=0,1$.
\begin{proposition}
\label{propora} \ There is $C\geq 0$ and $0<\rho<1$ 
such that for each $1\leq
i\leq k-1$, $g\in V_{i}$, and $n\geq 0$ one has%
\begin{equation*}
\Vert L_0^{n}g\Vert _{W^{i,1}}\leq C\rho ^{n}\Vert g\Vert _{W^{i,1}}.
\end{equation*}%
In particular, the resolvent $(Id-L_0)^{-1}:=\sum_{j=0}^{\infty }L_0^{j}$
is a well-defined and bounded operator on $V_{i}$.
\end{proposition}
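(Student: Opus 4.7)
The plan is to deduce the exponential decay on $V_i$ from the uniform Lasota--Yorke inequality in Lemma \ref{Lemsu}, the compact embedding $W^{i,1}\hookrightarrow W^{i-1,1}$, and standard quasi-compactness plus spectral-gap arguments for Markov operators on the invariant zero-mean subspace. First, I would invoke the Hennion--Ionescu-Tulcea-Marinescu mechanism: combining (\ref{unifLY}) with compactness of the embedding $W^{i,1}\hookrightarrow W^{i-1,1}$ yields that $L_0 : W^{i,1}\to W^{i,1}$ is quasi-compact, with essential spectral radius bounded by $\alpha^i<1$ (see e.g.\ Lemma 2.2 of \cite{BGK} or the classical statement in \cite{L2}). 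Hence the spectrum of $L_0$ on $W^{i,1}$ outside the disk of radius $\alpha^i$ consists of finitely many eigenvalues of finite multiplicity.

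Second, I would pin down the peripheral spectrum. Because $L_0$ is a Markov operator on $L^1$ and $T_0$ is (topologically) mixing, it is a classical fact (see e.g.\ \cite{L2}, Section 3) that the only eigenvalue of $L_0$ on the unit circle is $1$, which is simple, with the unique invariant density $f_0$ as its eigenvector. The zero-mean subspace $V_i=\{g\in W^{i,1}:\int_{S^1} g\, dm=0\}$ is closed, $L_0$-invariant (since $L_0$ preserves integrals against Lebesgue), and complementary to the one-dimensional span of $f_0$. Consequently, the spectrum of the restriction $L_0|_{V_i}$ is obtained from that of $L_0$ by removing the eigenvalue $1$, so $r(L_0|_{V_i})<1$.

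Third, I would fix any $\rho$ strictly between $r(L_0|_{V_i})$ and $1$. The quasi-compactness picture together with the spectral radius formula on the Banach space $V_i$ equipped with the $W^{i,1}$ norm provide a constant $C\ge 0$ (depending on $\rho$ and $i$) such that
\begin{equation*}
\|L_0^n g\|_{W^{i,1}}\le C\rho^n\|g\|_{W^{i,1}}\qquad\mbox{for all }g\in V_i,\ n\ge 0.
\end{equation*}
Taking the largest of these constants across the finitely many values $i=1,\dots,k-1$ and the corresponding maximum of the $\rho$'s yields uniform $C,\rho$ as in the proposition. Finally, since $\rho<1$, the Neumann series $\sum_{j=0}^{\infty}L_0^j g$ converges absolutely in $W^{i,1}$ for every $g\in V_i$ and defines a bounded operator $(Id-L_0)^{-1}: V_i\to V_i$, with $\|(Id-L_0)^{-1}\|_{V_i\to V_i}\le C/(1-\rho)$.

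The only genuine ingredient beyond the Lasota--Yorke bookkeeping is the claim that the peripheral spectrum is exactly $\{1\}$ and simple; that step ultimately rests on topological mixing of $T_0$ and is the standard place where dynamics (as opposed to functional analysis) enters. Everything else is a routine application of Hennion's theorem and the spectral decomposition of a quasi-compact operator, so I would expect the writeup to be short and mostly a pointer to \cite{L2}, Section 3 or \cite{GS}, Proposition 30.
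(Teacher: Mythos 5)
Your argument is correct and matches the paper's treatment: the paper gives no proof of Proposition \ref{propora} beyond the paragraph preceding it, which sketches exactly your route (Lasota--Yorke from Lemma \ref{Lemsu} plus the compact embedding $W^{i,1}\hookrightarrow W^{i-1,1}$ gives quasi-compactness, transitivity/mixing pins the peripheral spectrum to the simple eigenvalue $1$, and the restriction to the zero-mean subspace then has spectral radius $<1$) and defers to \cite{L2}, Section 3 and \cite{GS}, Proposition 30. No substantive differences to report.
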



\subsubsection{Small perturbation estimates}

In this subsection we recall some more or less known estimates (see e.g.\ \cite{Gdisp}) showing that a small
perturbation of an expanding map induces a small perturbation of the
associated transfer operator when considered as acting from a stronger to a
weaker Sobolev space.
This will allow us to verify that the assumption (GL4) applies to suitable deterministic perturbations of expanding maps.

\begin{proposition}
\label{prop14} Let
$\{T_{\delta }\}_{\delta \in [0,\overline{\delta })}$ 
be
a  family of $C^{2}$ expanding maps such that $T_{0}\in C^{3}$. Let  $%
L_{\delta }$ be the transfer operators associated to $T_{\delta }$ and suppose
that for some $K\in \mathbb{R}$
one has
\begin{equation*}
||T_{\delta }-T_{0}||_{C^{2}}\leq K\delta. 
\end{equation*}%
Then there is a $C>0$\ such that $\forall f\in W^{1,1}$:%
\begin{eqnarray}
||(L_{\delta }-L_{0})f||_{L^{1}} &\leq &\delta C||f||_{W^{1,1}}  \label{2l}
\\
||(L_{\delta }-L_{0})f||_{W^{1,1}} &\leq &\delta C||f||_{W^{2,1}} . 
\label{22}
\label{3l}
\end{eqnarray}
\end{proposition}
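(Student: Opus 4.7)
The plan is to establish \eqref{2l} first by a duality/interpolation argument, then derive \eqref{22} from \eqref{2l} using the elementary identity $(L_T f)' = L_T((f/T')')$, valid for any $C^2$ expanding $T$ and $f \in W^{1,1}$ (obtained by differentiating the inverse-branch expression $L_T f(x) = \sum_i (f/T')(y_i(x))$ and using $y_i'(x)=1/T'(y_i(x))$). For \eqref{2l}, I would consider the affine interpolation $T_s := T_0 + s(T_\delta - T_0)$ for $s\in[0,1]$; the hypothesis $\|T_\delta - T_0\|_{C^2}\le K\delta$ ensures that, for $\overline{\delta}$ sufficiently small, $\{T_s\}_{s\in[0,1],\,\delta\in[0,\overline{\delta})}$ forms a uniform $C^2$ expanding family, so Lemma \ref{Lemsu} supplies a uniform bound on $\|L_s\|_{W^{1,1}\to W^{1,1}}$. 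For any smooth test function $g$ on $S^1$, combining the mean value identity $g\circ T_\delta - g\circ T_0 = \int_0^1 g'(T_s)(T_\delta-T_0)\,ds$ with the defining duality $\int \phi(T_s) h\,dm = \int \phi \cdot L_s h\,dm$ and integrating by parts on $S^1$ yields
$$(L_\delta - L_0)f = -\int_0^1 \bigl(L_s[(T_\delta - T_0)f]\bigr)'\,ds$$
as an equality in $L^1$. Taking $L^1$-norms, bounding by $\|L_s[(T_\delta - T_0)f]\|_{W^{1,1}}$, and invoking both the uniform $W^{1,1}$-bound for $L_s$ and the Leibniz estimate $\|(T_\delta - T_0)f\|_{W^{1,1}}\le C\|T_\delta - T_0\|_{C^1}\|f\|_{W^{1,1}}\le CK\delta\|f\|_{W^{1,1}}$ produces \eqref{2l}.

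For \eqref{22}, applying the identity $(L_T f)' = L_T\bigl((f/T')'\bigr)$ to both $T_\delta$ and $T_0$ and splitting the difference gives
$$\bigl((L_\delta-L_0)f\bigr)' = L_\delta\Bigl(\bigl(\tfrac{f}{T_\delta'}\bigr)' - \bigl(\tfrac{f}{T_0'}\bigr)'\Bigr) + (L_\delta - L_0)\Bigl(\bigl(\tfrac{f}{T_0'}\bigr)'\Bigr).$$
The first term admits the pointwise estimate $|(f/T_\delta')' - (f/T_0')'| \le C\delta(|f|+|f'|)$, a direct consequence of $\|T_\delta - T_0\|_{C^2}\le K\delta$ and the uniform expansion lower bound; combined with $\|L_\delta\|_{L^1\to L^1}\le 1$ (the Markov property), this bounds it in $L^1$ by $C\delta\|f\|_{W^{1,1}}$. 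For the second term I would apply \eqref{2l} to $h := (f/T_0')' \in W^{1,1}$, whose $W^{1,1}$-norm is controlled by $C\|f\|_{W^{2,1}}$; this is exactly where the hypothesis $T_0\in C^3$ is essential, since $h'$ involves $T_0'''$. Combined with \eqref{2l} applied to $f$ itself, this delivers \eqref{22}.

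The main technical point is to justify the duality/integration-by-parts manipulation rigorously, specifically to verify that $\{T_s\}$ is a uniform $C^2$ expanding family so that Lemma \ref{Lemsu} applies with constants independent of $s$ and $\delta$; this reduces to observing that convex combinations preserve both the $C^2$-norm bound and the uniform expansivity lower bound (for $\overline{\delta}$ small). The asymmetric hypotheses ($T_0 \in C^3$ versus $T_\delta$ only $C^2$) are dictated by the second step: only $(f/T_0')'$ must retain $W^{1,1}$-regularity for re-insertion into \eqref{2l}, whereas $(f/T_\delta')'$ is processed only at the $L^1$ level via the Markov property of $L_\delta$, so no extra smoothness of $T_\delta$ is required.
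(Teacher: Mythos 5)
Your proof is correct. For \eqref{22} your argument is essentially the paper's: both differentiate the inverse-branch formula to obtain $(L_Tf)'=L_T\left(\left(f/T'\right)'\right)$, then split the difference into $L_\delta$ applied to the difference of the coefficient functions (controlled pointwise by $\|T_\delta-T_0\|_{C^2}$ and the expansion lower bound, then passed through the $L^1$-contraction of $L_\delta$) plus $(L_\delta-L_0)$ applied to $\left(f/T_0'\right)'$, which is fed back into \eqref{2l}; this is exactly where $T_0\in C^3$ and $f\in W^{2,1}$ enter, as you correctly note. The genuine difference is in \eqref{2l}: the paper does not prove it but cites \cite{Gdisp}, Section 7, whereas you give a self-contained derivation via the convex interpolation $T_s=T_0+s(T_\delta-T_0)$, the duality identity for $L_s$, and integration by parts, together with the observation that $\{T_s\}$ forms a uniform $C^2$ expanding family so that Lemma \ref{Lemsu} gives a $W^{1,1}$ bound uniform in $s$ and $\delta$. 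This is the standard mechanism behind the cited estimate, and your version has the advantage of making the proposition self-contained; the only points requiring care (which you flag) are that $T_\delta-T_0$ must be read as a small real-valued lift so that the mean-value identity makes sense on $S^1$, and that $\overline{\delta}$ must be taken small enough that the interpolants remain uniformly expanding with the same number of branches.
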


\begin{proof}
In \cite{Gdisp}, Section 7 it is proved that if $L_{0}$ and $L_{\delta }$
are transfer operators of $C^{3}$ expanding maps $T_{0}$ and $T_{\delta },$
such that for some $K\in \mathbb{R}$ 
\begin{equation*}
||T_{\delta }-T_{0}||_{C^{2}}\leq K\delta 
\end{equation*}
then there is a $C\in \mathbb{R}$\ such that $\forall f\in W^{1,1}$:%
\begin{equation}\label{54}
||(L_{\delta }-L_{0})f||_{1}\leq \delta C||f||_{W^{1,1}}
\end{equation}%
and \eqref{2l}  is established. From this we can also recover \eqref{22},
using the explicit formula for the transfer operator:%
\begin{equation}
\lbrack L_{0}f](x)=\sum_{y\in T_0^{-1}(x)}\frac{f(y)}{|T_{0}^{\prime }(y)|}.
\label{pf}
\end{equation}%
Noting that $T_{0}^{\prime }(y)=T_{0}^{\prime }(T_{0}^{-1}(x))$ we
can compute the derivative of $(\ref{pf})$ 
\begin{equation*}
(L_{0}f)^{^{\prime }}=\sum_{y\in T_{0}^{-1}(x)}\frac{1}{(T_{0}^{\prime
}(y))^{2}}f^{\prime }(y)-\frac{T_{0}^{\prime \prime }(y)}{(T_{0}^{\prime
}(y))^{3}}f(y).
\end{equation*}%
And similarly for $L_{\delta }.$ Note that%
\begin{equation}
(L_{0}f)^{^{\prime }}=L_{0}(\frac{1}{T_{0}^{\prime }}f^{\prime })-L_{0}(
\frac{T_{0}^{\prime \prime }}{(T_{0}^{^{\prime }})^{2}}f).  \label{preLY}
\end{equation}

Hence applying \eqref{54},  \eqref{preLY}  and the fact that $L$ is a weak contraction on $L^1$
\begin{eqnarray*}
||(L_{\delta }-L_{0})f||_{W^{1,1}} &\leq &||(L_{\delta
}-L_{0})f||_{L^{1}}+||((L_{\delta }-L_{0})f)^{\prime }||_{L^{1}} \\
&\leq &\delta C||f||_{W^{1,1}}+||(L_{\delta }-L_{0})(\frac{1}{T_{0}^{\prime }%
}f^{\prime })-(L_{\delta }-L_{0})(\frac{T_{0}^{\prime \prime }}{%
(T_{0}^{^{\prime }})^{2}}f)||_{L^{1}} \\
&&+||(\frac{1}{T_{\delta }^{\prime }}f^{\prime })-(\frac{1}{T_{0}^{\prime }}%
f^{\prime })+(\frac{T_{\delta }^{\prime \prime }}{(T_{\delta }^{^{\prime
}})^{2}}f)-(\frac{T_{0}^{\prime \prime }}{(T_{0}^{^{\prime }})^{2}}%
f)||_{L^{1}} \\
&\leq &\delta C(||f||_{W^{1,1}}+||\frac{1}{T_{0}^{\prime }}f^{\prime
}||_{W^{1,1}}+||\frac{T_{0}^{\prime \prime }}{(T_{0}^{^{\prime }})^{2}}%
f||_{W^{1,1}}) \\
&&+C_1||T_{\delta }-T_{0}||_{C^{2}}||f||_{W^{1,1}} \\
&\leq &\delta C_{2}||f||_{W^{2,1}}
\end{eqnarray*}%
for some $C_1, C_{2}\geq 0$ depending on $T_{0}$ but not on $f.$ This proves $(%
\ref{22}).$

\end{proof}

Small perturbations in a mixed norm sense -- namely $W^{1,1}$ into $L^{1}$ -- (see (GL4) and Proposition \ref{prop14}) 
will imply a classical fact: the stability of the
resolvent.
The following will be used in the proof of Proposition \ref{Prop:LRmain} to verify  assumption \eqref{reslim} when invoking Theorem \ref{LR}.

\begin{proposition}
\label{resstab}
For $\delta \in
[0,\overline{\delta})$, let $T_{\delta }:S^{1}\rightarrow S^{1}$  be a family of $C^{3}$ expanding maps. Suppose that the
dependence of the family on $\delta $ is differentiable at $0$ in the sense of assumptions (A0), (A1),(A2), then
\begin{equation}
\lim_{\delta \rightarrow 0}||(Id-L_{\delta
})^{-1}-(Id-L_{0})^{-1}||_{V_{1}\rightarrow V_{0}}=0.  \label{n1}
\end{equation}
\end{proposition}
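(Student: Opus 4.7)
The plan is to recognise the claim as an instance of the Keller--Liverani spectral stability theorem \cite{KL}, applied to the pair of zero-mean subspaces $(V_1, V_0)$ at the spectral parameter $z=1$. As raw ingredients I would gather: the uniform Lasota--Yorke inequality $\|L_\delta^n f\|_{W^{1,1}} \le \alpha^n\|f\|_{W^{1,1}} + B_1\|f\|_{L^1}$, obtained from Lemma \ref{Lemsu} with $i=1$; the mixed-norm smallness $\|L_\delta - L_0\|_{V_1\to V_0}\le C\delta$, which follows from estimate \eqref{2l} of Proposition \ref{prop14} together with the fact that the Markov property of both $L_\delta$ and $L_0$ automatically sends $L_\delta - L_0$ into zero-mean functions; the fact that $1$ lies in the resolvent set of $L_0$ acting on $V_1$, supplied by the convergent Neumann series in Proposition \ref{propora}; and the compact embedding $W^{1,1}\hookrightarrow L^1$.

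The resolvent identity
\[
(Id - L_\delta)^{-1} - (Id - L_0)^{-1} = (Id - L_\delta)^{-1}(L_\delta - L_0)(Id - L_0)^{-1}
\]
organises the argument on $V_1$: Proposition \ref{propora} controls the rightmost factor as a bounded map $V_1\to V_1$, Proposition \ref{prop14} delivers $\delta$-smallness of the middle factor in the mixed norm $V_1\to V_0$, and the Keller--Liverani theorem simultaneously provides uniform boundedness of the leftmost factor on $V_1$ (through the persistence of the spectral gap of $L_0|_{V_1}$ at $1$) and converts these ingredients into the conclusion $\|(Id-L_\delta)^{-1}-(Id-L_0)^{-1}\|_{V_1\to V_0}\to 0$, with a quantitative rate of the form $\delta^\eta$ for some $\eta>0$. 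If a self-contained argument were preferred, I would expand $(Id-L_\delta)^{-1}-(Id-L_0)^{-1}=\sum_{n\ge 0}(L_\delta^n-L_0^n)$ on $V_1$, bound the $n$-th summand by $\min(nC\delta,\,C'\rho^n)$ via the telescoping identity $L_\delta^n-L_0^n=\sum_{k=0}^{n-1}L_\delta^{n-1-k}(L_\delta-L_0)L_0^k$ together with the uniform exponential decay $\|L_0^k\|_{V_1\to V_1}\le C\rho^k$ from Proposition \ref{propora}, split the sum at $n\sim|\log\delta|$, and combine the two regimes to obtain a bound of order $\delta|\log\delta|$.

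The main obstacle is that $(Id-L_\delta)^{-1}$ is not available as a bounded operator on $V_0$ on its own, because the $L^1$ spectral radius of $L_\delta$ is $1$; the naive use of the resolvent identity purely in the weak norm therefore breaks down. The Lasota--Yorke smoothing is what transfers control back to the strong space $V_1$, where the spectral gap persists under perturbation, and this is precisely the mechanism that the Keller--Liverani theorem packages for us.
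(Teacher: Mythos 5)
Your proposal is correct and follows essentially the same route as the paper: both invoke the Keller--Liverani stability theorem (Theorem 1 of \cite{KL}) on the pair $(V_1,V_0)$, verifying its hypotheses via the uniform Lasota--Yorke inequalities of Lemma \ref{Lemsu}, the mixed-norm perturbation bound of Proposition \ref{prop14}, the spectral gap of $L_0|_{V_1}$ from Proposition \ref{propora}, and the Rellich--Kondrachov compact embedding. Your additional self-contained telescoping argument is a reasonable alternative but is not what the paper does.
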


\begin{proof}
The result  follows from Theorem 1 of \cite{KL}. This theorem says that if a family of bounded linear operators $L_\delta$ acting on  weak and strong Banach spaces $\mathcal{B}_s,\mathcal{B}_w$ such that (i) there is a compact immersion of $\mathcal{B}_s$ into $\mathcal{B}_w$,  (ii) the family of operators satisfy (GL1), the first equation of (GL2), (GL3) and (GL4), and (iii) the  spectral radius of $L_0:\mathcal{B}_s\to \mathcal{B}_s$ is strictly smaller than $1$,
\begin{equation*}
\lim_{\delta \rightarrow 0}||(Id-L_{\delta
})^{-1}-(Id-L_{0})^{-1}||_{\mathcal{B}_s\rightarrow \mathcal{B}_w}=0.  
\end{equation*}
Equation $(\ref{n1})$ will follow directly by applying this theorem to the transfer operators $L_\delta$ associated to a family of expanding maps $T_\delta$ satisfying (A0), (A1), (A2) considering $\mathcal{B}_s=V_{1}$ and $\mathcal{B}_w=V_0$.
For (i) the compact immersion is well known from the Rellich--Kondracov theorem. 
For (ii), note that for the family of maps $T_\delta$,
(GL1), (GL2) and (GL3) are verified
by the Lasota--Yorke inequalities established in Lemma \ref{Lemsu}, 
and
the small perturbation assumptions needed at (GL4)  are established in Proposition\ \ref{prop14}.
For (iii), we note that by Proposition \ref{propora}, the spectral radius of $L_0|_{V_1}\le\rho<1$.
Thus, noting that $\|(Id-L_0)^{-1}\|_{V_1}<\infty$ (also by Proposition \ref{propora}) the theorem can be applied, implying $(\ref{n1})$.

\end{proof}

\subsubsection{The derivative operator}\label{9.4.2}

In this section we study the operator $\dot{L}$ representing a \textquotedblleft first
derivative\textquotedblright\ of the transfer operator with respect to the
perturbation parameter $\delta$.  
The next result is similar to \cite[Proposition 3.1]{GP} but has been strengthened to be
quantitative and uniform, in order to 
verify the assumptions  (GL5), (GL6) of Theorem \ref{GL}  and the assumption \eqref{resex} of Proposition \ref{lineig1}.


\begin{proposition}
\label{mainprop} Let $T_{\delta }:S^{1}\rightarrow S^{1}$, where $\delta \in
[0,\overline{\delta})$  be a family of $C^{3}$ expanding maps. Suppose that the dependence of the family on $\delta $ is  differentiable at $0$
in the sense of (A2).
Let $L_\delta$ be the transfer operator associated to $T_\delta$.
Let us consider again the operator  $\dot{L}:W^{1,1}\to L^1$ defined in 
\eqref{der} by
\begin{equation}\label{formulaz}
\dot{L}(w)=-L_{0}\left( \frac{w\dot{T} ^{\prime }}{T_{0}^{\prime }}%
\right) -L_{0}\left( \frac{\dot{T} w^{\prime }}{T_{0}^{\prime }}\right)
+L_{0}\left( \frac{\dot{T} T_{0}^{\prime \prime }}{T_{0}^{^{\prime }2}}%
w\right).
\end{equation}

For $w\in C^2(S^1,\mathbb{R})$  we have 
\begin{equation}\label{58}
\dot{L}(w)=\lim_{\delta \rightarrow 0}\left( \frac{L_{\delta }w-L_{0}w%
}{\delta }\right)
\end{equation}
with convergence in the $C^1$ topology (and therefore also in the $W^{1,1}$ and $H^{1}$ topologies). 
Moreover, one has a quantitative and uniform convergence on the unit ball $B_{W^{2,1}(S^1)}$:
\begin{equation}
\sup_{w\in B_{W^{2,1}(S^1)}}\left\|\dot{L}(w)-\frac{(L_{\delta}-L_{0})w}{\delta }%
\right\|_{L^1}=O(\delta).  \label{GL6}
\end{equation}
\end{proposition}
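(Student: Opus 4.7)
The plan is to work branch by branch on the explicit formula for $L_\delta$. By (A1) and the $\|T_\delta-T_0\|_{C^3}=O(\delta)$ control provided by (A2), the Implicit Function Theorem supplies, for each inverse branch $y_0:U\to S^1$ of $T_0$, a $C^3$ branch $y_\delta:U\to S^1$ of $T_\delta^{-1}$. Implicit differentiation of $T_\delta(y_\delta(x))=x$ in $\delta$ at $0$, combined with the $O_{C^3}(\delta^2)$ remainder from (A2), yields
\[y_\delta(x)=y_0(x)-\delta\,\frac{\dot T(y_0(x))}{T_0'(y_0(x))}+R_{1,\delta}(x),\qquad \|R_{1,\delta}\|_{C^1}=O(\delta^2),\]
uniformly in $x\in U$, together with a parallel first-order expansion for $T_\delta'(y_\delta(x))$.

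I would then Taylor-expand $w(y_\delta(x))/T_\delta'(y_\delta(x))$ in $\delta$ at $0$, keeping terms through first order and multiplying out the expansion of $1/T_\delta'(y_\delta(x))$. The zeroth-order term is $w(y_0)/T_0'(y_0)$, and a direct computation shows that the first-order coefficient, after summing over the finitely many branches of $T_0^{-1}$ and using $y_\delta-y_0=-\delta\dot T(y_0)/T_0'(y_0)+O(\delta^2)$, coincides with the right-hand side of \eqref{formulaz}. For $w\in C^2$ every quantity appearing in the Taylor remainders is uniformly bounded, so $(L_\delta w - L_0 w)/\delta\to \dot L w$ uniformly in $x$. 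To upgrade this to $C^1$-convergence, the same branch-wise argument is applied to the alternative expression \eqref{preLY} for $(L_\delta w)'$, which involves only $w$, $w'$, and first derivatives of $T_\delta$, all of which depend $C^0$-continuously on $\delta$ by (A2).

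For the quantitative bound \eqref{GL6}, the subtlety is that on $B_{W^{2,1}(S^1)}$ we only have $w''\in L^1$, so no pointwise Lagrange bound on the Taylor remainder is available. I would replace it by the integral Taylor remainder
\[w(y_\delta)-w(y_0)-w'(y_0)(y_\delta-y_0)=\int_0^1(1-s)\,w''\!\bigl(y_0+s(y_\delta-y_0)\bigr)(y_\delta-y_0)^2\,ds,\]
and handle the cross terms arising from the expansion of $1/T_\delta'(y_\delta)$ analogously. Using $|y_\delta-y_0|=O(\delta)$ uniformly together with the observation that, for every $s\in[0,1]$ and $\delta$ small, $x\mapsto y_0(x)+s(y_\delta(x)-y_0(x))$ is bi-Lipschitz on each branch with Jacobian bounded above and below uniformly in $s$ and $\delta$, Fubini plus a change of variables absorbs the $w''$ factor into its $L^1$ norm. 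This yields an $L^1$ bound of size $C\delta^2\|w\|_{W^{2,1}}$ on the integral remainder pieces; the purely first-derivative-in-$\delta$ pieces involving only $w$ and $w'$ are controlled by $\|w\|_{W^{1,1}}\le \|w\|_{W^{2,1}}$, so after division by $\delta$ one obtains \eqref{GL6}.

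The main obstacle is exactly this last step: Taylor's theorem with pointwise Lagrange remainder is not available when $w''$ is merely integrable, so the uniform $L^1$ estimate must be produced from the integral form of the remainder together with the uniform bi-Lipschitz change of variables associated with the convex combinations of $y_0$ and $y_\delta$, which is what converts a pointwise $O(\delta^2)$ growth into a bound controlled by $\|w\|_{W^{2,1}}$ uniformly on the unit ball.
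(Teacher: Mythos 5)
Your proposal follows essentially the same route as the paper: a first-order expansion of the inverse branches with a $C^1$-controlled $O(\delta^2)$ remainder (the paper's Lemma \ref{tec}), a branch-wise expansion of the difference quotient whose first-order coefficient is matched with \eqref{formulaz}, and---for the uniform bound \eqref{GL6}---the integral form of the Taylor remainder combined with Fubini and a change of variables between the branches to absorb $w''$ into $\|w\|_{W^{2,1}}$, which is exactly how the paper treats its term $(II)$. The only variation is your $C^1$ upgrade via \eqref{preLY} (note that formula also involves $T_\delta''$, not only first derivatives, though (A2) still controls it) where the paper instead differentiates the difference quotient directly; both routes work.
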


We denote by $\{y_{i}^{\delta }\}_{i=1}^{d}:=T_{\delta }^{-1}(x)$ and $%
\{y_{i}^{0}\}_{i=1}^{d}:=T_{0}^{-1}(x)$ the $d$ preimages under $T_{\delta }$
and $T_{0}$, respectively, of a point $x\in X$. Futhermore, we assume that
the indexing is chosen so that $y_{i}^{\delta }$ is a small perturbation of $%
y_{i}^{0}$, for $1\leq i\leq d$. Before presenting the proof of Proposition %
\ref{mainprop} we state a lemma.


\begin{lemma}
\label{tec} For $y_{i}^{\delta }\in T_{\delta }^{-1}(x)$ we can write 
\begin{equation*}
y_{i}^{\delta }=y_{i}^{0}+\delta \left( -\frac{\dot{T}(y_{i}^{0})}{%
T_{0}^{\prime }(y_{i}^{0})}\right) +O_{C^{1}}(\delta ^{2}),\quad 
\mbox{in the limit
as $\delta\to 0$},
\end{equation*}
where we say $F_\delta$ is $O_{C^1}(\delta^2)$ if
\begin{equation}
\underset{\delta \in (0,\overline{\delta })}{\sup }\frac{\|F_\delta\|_{C^1}}{\delta ^{2}}<+\infty.  \label{OO}
\end{equation}
\end{lemma}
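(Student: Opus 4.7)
I will invert $T_\delta(y_i^\delta(x)) = x = T_0(y_i^0(x))$ perturbatively, combining (A2)'s expansion $T_\delta = T_0 + \delta\dot{T} + R_\delta$ (with $\|R_\delta\|_{C^3} = O(\delta^2)$) with the uniform expansion $|T_0'| \geq 1/\alpha > 1$ to peel off the leading term. Working on a single branch of $T_0^{-1}$ (the branches are uniformly separated and $T_0 \in C^4$, so $y_i^\delta$ is a well-defined $C^3$ function of $x$ with constants uniform in $x$), I would first prove the pointwise expansion and then differentiate to upgrade to $C^1$.

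Step 1 ($C^0$ expansion). Rearranging $T_\delta(y_i^\delta) = T_0(y_i^0)$ gives
\[
T_0(y_i^\delta) - T_0(y_i^0) = -\delta\,\dot{T}(y_i^\delta) - R_\delta(y_i^\delta),
\]
whose right side is $O(\delta)$; the mean-value theorem and $T_0' \geq 1/\alpha$ yield the crude bound $\|y_i^\delta - y_i^0\|_\infty = O(\delta)$. Substituting this back into $T_0(y_i^\delta) - T_0(y_i^0) = T_0'(y_i^0)(y_i^\delta - y_i^0) + O(\|y_i^\delta - y_i^0\|_\infty^2)$ and $\dot{T}(y_i^\delta) = \dot{T}(y_i^0) + O(\|y_i^\delta - y_i^0\|_\infty)$, and using $\|R_\delta\|_\infty \leq \|R_\delta\|_{C^3} = O(\delta^2)$, produces
\[
y_i^\delta(x) = y_i^0(x) - \delta\,\frac{\dot{T}(y_i^0(x))}{T_0'(y_i^0(x))} + r_\delta(x),\qquad \|r_\delta\|_\infty = O(\delta^2).
\]

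Step 2 ($C^1$ refinement). Differentiating $T_\delta \circ y_i^\delta = \mathrm{id}$ gives $(y_i^\delta)'(x) = 1/T_\delta'(y_i^\delta(x))$. Expanding $T_\delta' = T_0' + \delta\dot{T}' + R_\delta'$ with $\|R_\delta'\|_\infty = O(\delta^2)$, and using the Step 1 expansion of $y_i^\delta - y_i^0$ together with the Taylor expansions $T_0'(y_i^\delta) = T_0'(y_i^0) + T_0''(y_i^0)(y_i^\delta - y_i^0) + O(\delta^2)$ and $\dot{T}'(y_i^\delta) = \dot{T}'(y_i^0) + O(\delta)$, I obtain uniformly in $x$
\[
T_\delta'(y_i^\delta) = T_0'(y_i^0) + \delta\Big[\dot{T}'(y_i^0) - \frac{T_0''(y_i^0)\,\dot{T}(y_i^0)}{T_0'(y_i^0)}\Big] + O(\delta^2).
\]
Inverting (using $T_0' \geq 1/\alpha$ to keep the geometric expansion well-defined) and subtracting $(y_i^0)'(x) = 1/T_0'(y_i^0(x))$ gives an $O(\delta^2)$ $L^\infty$-bound on $(y_i^\delta)' - (y_i^0)' - \delta g'$, where a direct chain-rule computation on $g(x) := -\dot{T}(y_i^0(x))/T_0'(y_i^0(x))$ confirms that the $\delta$-coefficient matches $g'(x)$. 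Combined with Step 1 this gives $\|r_\delta\|_{C^1} = O(\delta^2)$, i.e.\ \eqref{OO}.

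The main subtle point is \emph{uniformity in $x$} of every Taylor remainder, which is exactly what (A0) and (A2) provide: $T_0 \in C^4$ yields $\|T_0''\|_\infty < \infty$ and $C^3$-smooth branches of $T_0^{-1}$, while $\|R_\delta\|_{C^3} = O(\delta^2)$ yields both $\|R_\delta\|_\infty$ and $\|R_\delta'\|_\infty$ of order $\delta^2$. The branch-labelling issue is harmless since there are finitely many uniformly separated branches and the identical estimate holds on each.
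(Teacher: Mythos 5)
Your proposal is correct and follows essentially the same route as the paper's proof: expand the implicit relation $T_\delta(y_i^\delta(x))=x$ using (A2) and Taylor's theorem to identify the first-order coefficient $-\dot{T}(y_i^0)/T_0'(y_i^0)$ and bound the remainder uniformly, then differentiate via $(y_i^\delta)'=1/T_\delta'(y_i^\delta)$ and expand again to upgrade the bound to $C^1$. The only difference is organizational — you first extract a crude $\|y_i^\delta-y_i^0\|_\infty=O(\delta)$ bound and bootstrap, whereas the paper posits an ansatz with $o(\delta)$ remainder and absorbs it via an inequality — but both hinge on the same cancellation between the expansion of $1/T_\delta'(y_i^\delta)-1/T_0'(y_i^0)$ and the derivative of $\dot{T}(y_i^0)/T_0'(y_i^0)$.
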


\begin{proof}[Proof of Lemma \protect\ref{tec}]
For a family of functions $F_{\delta }$ we will say that $F_{\delta
}=O_{C^{0}}(\delta ^{2})$ if $\underset{\delta \in (0,\overline{\delta })}{%
\sup }\frac{||F_{\delta }||_{C^{0}}}{\delta ^{2}}<+\infty $ for some $%
\overline{\delta }>0.$ Let $T_{\delta ,i}$ be the branches of $T_{\delta }$.
We have $y_{i}^{\delta }(x)=T_{\delta ,i}^{-1}(x)$ and%
\begin{equation}
\frac{dy_{i}^{\delta }(x)}{dx}=\frac{1}{T_{\delta }^{\prime }(T_{\delta
,i}^{-1}(x))}.  \label{part1}
\end{equation}

Now let us compute $\frac{dy_{i}^{\delta }(x)}{d\delta }.$ \ Let us fix $%
x\in S^{1}$ and write 
\begin{equation}
y_{i}^{\delta }(x)=y_{i}^{0}(x)+\delta \epsilon _{i}(x)+F_{i}(\delta ,x)
\label{sss}
\end{equation}%
where for each $x$, $F_{i}(\delta ,x)=o(\delta )$ as $\delta \rightarrow 0$.
We will show that $\epsilon _{i}(x)=-\frac{\dot{T}(y_{i}^{0}(x))}{%
T_{0}^{\prime }(y_{i}^{0}(x))}$ and $F_{i}(\delta ,x)=O(\delta ^{2})$ for
each $x,$ then we will show that $F_{i}(\delta ,x)$ satisfies \eqref{OO}.

For the first two claims, let us fix $x\in S^{1}.$ Substituting $(\ref{sss})$
into the identity $T_{\delta }(y_{i}^{\delta }(x))=x$ we can expand%
\begin{eqnarray*}
x &=&T_{\delta }(y_{i}^{\delta }(x)) \\
&=&T_{0}(y_{i}^{\delta }(x))+\delta \dot{T}(y_{i}^{\delta }(x))+E(\delta
,y_{i}^{\delta }(x))
\end{eqnarray*}%
where by $(\ref{not492})$, $E(\delta ,x)$ satisfies \eqref{OO}. Further, 
\begin{eqnarray*}
x &=&T_{0}(y_{i}^{0}(x)+\delta \epsilon _{i}(x)+F_{i}(\delta ,x))
\label{long} \\
&&\ \ \ +\delta \dot{T}(y_{i}^{0}(x)+\delta \epsilon _{i}(x)+F_{i}(\delta
,x))+E(\delta ,y_{i}^{\delta }(x)).
\end{eqnarray*}%
Since $T_{0}\in C^{2}$ we can write the first term in the right hand side of 
$(\ref{long})$ as%
\begin{eqnarray}
T_{0}(y_{i}^{0}(x)+\delta \epsilon _{i}(x)+F_{i}(\delta ,x))
&=&  T_{0}(y_{i}^{0}(x))+T_{0}^{\prime }(y_{i}^{0}(x))(\delta \epsilon
_{i}(x)+F_{i}(\delta ,x)) \\
&+\frac{1}{2}T_{0}^{^{\prime \prime }}(\xi )&(\delta \epsilon
_{i}(x)+F_{i}(\delta ,x))^{2}  \label{aaa}
\nonumber
\end{eqnarray}

for some $\xi \in S^{1}$. Since $\dot{T}\in C^{2}$ we can develop the second
term in \eqref{long} as 
\begin{eqnarray}
\delta \dot{T}(y_{i}^{0}(x)+\delta \epsilon _{i}(x)+F_{i}(\delta ,x))
&=&\delta \dot{T}(y_{i}^{0}(x))+\delta \dot{T}^{\prime
}(y_{i}^{0}(x))(\delta \epsilon _{i}(x)+F_{i}(\delta ,x)) \\
&&+\frac{1}{2}\dot{T}_{0}^{^{\prime \prime }}(\xi ^{\prime })(\delta
\epsilon _{i}(x)+F_{i}(\delta ,x))^{2}  \label{ccc}
\nonumber
\end{eqnarray}%
for some $\xi ^{\prime }\in S^{1}$ and use that $T_{0}(y_{i}^{0}(x))=x$ to
cancel terms on either side of $(\ref{long})$ and get%
\begin{eqnarray}
0 &=&T_{0}^{\prime }(y_{i}^{0}(x))(\delta \epsilon _{i}(x)+F_{i}(\delta
,x))+\delta \dot{T}(y_{i}^{0}(x))+\delta \dot{T}^{\prime
}(y_{i}^{0}(x))(\delta \epsilon _{i}(x)+F_{i}(\delta ,x))  \label{hent} \\
&&+\frac{1}{2}T_{0}^{^{\prime \prime }}(\xi ^{\prime })(\delta \epsilon
_{i}(x)+F_{i}(\delta ,x))^{2}+\frac{1}{2}\dot{T}_{0}^{^{\prime \prime }}(\xi
^{\prime })(\delta \epsilon _{i}(x)+F_{i}(\delta ,x))^{2}+E(\delta
,y_{i}^{\delta }(x)).  \notag
\end{eqnarray}

Recalling that $T_{0}^{^{\prime \prime }}$ is uniformly bounded on $S^{1},$
for each fixed $x,$ as $\delta \rightarrow 0$ we can then identify the
first-order terms in $(\ref{hent})$ as $\delta T_{0}^{\prime
}(y_{i}^{0}(x))\epsilon _{i}(x)+\delta \dot{T}(y_{i}^{0}(x))$, \ giving $%
\epsilon _{i}(x)=-\frac{\dot{T}(y_{i}^{0})}{T_{0}^{\prime }(y_{i}^{0})}$ \
and thus 
\begin{equation*}
\frac{dy_{i}^{\delta }(x)}{d\delta }=-\frac{\dot{T}(y_{i}^{0}(x))}{%
T_{0}^{\prime }(y_{i}^{0}(x))}.
\end{equation*}

Putting $\epsilon _{i}(x)=-\frac{\dot{T}\circ y_{i}^{0}}{T_{0}^{\prime
}\circ y_{i}^{0}}$ into $(\ref{hent})$ we get%
\begin{eqnarray*}
0 &=&T_{0}^{\prime }(y_{i}^{0}(x))F_{i}(\delta ,x)+\delta \dot{T}^{\prime
}(y_{i}^{0}(x))F_{i}(\delta ,x) \\
&&+\frac{1}{2}T_{0}^{^{\prime \prime }}(\xi ^{\prime })(\delta \epsilon
_{i}(x)+F_{i}(\delta ,x))^{2}+\frac{1}{2}\dot{T}_{0}^{^{\prime \prime }}(\xi
^{\prime })(\delta \epsilon _{i}(x)+F_{i}(\delta ,x))^{2}+E(\delta
,y_{i}^{\delta }(x))
\end{eqnarray*}

from which%
\begin{equation*}
|T_{0}^{\prime }(y_{i}^{0}(x))F_{i}(\delta ,x)+\delta \dot{T}^{\prime
}(y_{i}^{0}(x))F_{i}(\delta ,x)|\leq |E(\delta ,y_{i}^{\delta
}(x))|+M(\delta \epsilon _{i}(x)+F_{i}(\delta ,x))^{2}
\end{equation*}

where $M=\sup |\max (T_{0}^{^{\prime \prime }},\dot{T}_{0}^{^{\prime \prime
}})|.$ Considering $\delta $ small enough we can suppose $T_{0}^{\prime
}(y_{i}^{0}(x))+\delta \dot{T}^{\prime }(y_{i}^{0}(x))\geq 1,$ hence%
\begin{equation}\label{neq1}
|F_{i}(\delta ,x)|\leq |E(\delta ,y_{i}^{\delta }(x))|+M(\delta ^{2}\epsilon
_{i}(x)+F_{i}(\delta ,x)^{2}+2\delta \epsilon _{i}(x)F_{i}(\delta ,x)).
\end{equation}

Since $F_{i}(\delta ,x)=o(\delta )$ we get that for each $x$%
\begin{equation}
F_{i}(\delta ,x)=O(\delta ^{2}).  \label{o2}
\end{equation}

By $(\ref{sss})$ we have 
\begin{equation*}
F_{i}(\delta ,x)=y_{i}^{\delta }(x)-y_{i}^{0}(x)+\delta \frac{\dot{T}%
(y_{i}^{0}(x))}{T_{0}^{\prime }(y_{i}^{0}(x))}
\end{equation*}%
and by $($\ref{part1}$)$%
\begin{equation}
\frac{dF_{i}(\delta ,x)}{dx}=\frac{1}{T_{\delta }^{\prime }(y_{i}^{\delta
}(x))}-\frac{1}{T_{0}^{\prime }(y_{i}^{0}(x))}+\delta \frac{%
y_{i}^{0}(x)^{\prime }\dot{T}^{\prime }(y_{i}^{0}(x))T_{0}^{\prime
}(y_{i}^{0}(x))-y_{i}^{0}(x)^{\prime }\dot{T}(y_{i}^{0}(x))T_{0}^{^{\prime
\prime }}(y_{i}^{0}(x))}{(T_{0}^{\prime }(y_{i}^{0}(x)))^{2}}.  \label{part3}
\end{equation}%
This shows that $\frac{dF_{i}(\delta ,x)}{dx}$ is uniformly bounded and then
by $(\ref{o2})$ and the compactness of $S^{1}$%
\begin{equation*}
\lim_{\delta \rightarrow 0}||F_{i}(\delta ,x)||_{\infty }=0.
\end{equation*}%
for each $i.$ \ This means that for $\delta $ small enough $|F_{i}(\delta
,x)|\leq \frac{1}{2M}$ for each $x$ and then inserting this in $(\ref{neq1})$
we get%
\begin{equation*}
|F_{i}(\delta ,x)|\leq |E(\delta ,y_{i}^{\delta }(x))|+M(\delta ^{2}\epsilon
_{i}(x)+\frac{1}{2M}|F_{i}(\delta ,x)|+2\delta \epsilon _{i}(x)|F_{i}(\delta
,x)|)
\end{equation*}%
and 
\begin{equation*}
|F_{i}(\delta ,x)|\leq 2|E(\delta ,y_{i}^{\delta }(x))|+2M(\delta
^{2}\epsilon _{i}(x)+2\delta \epsilon _{i}(x)|F_{i}(\delta ,x)|)
\end{equation*}%
\begin{equation*}
(1-4M\delta \epsilon _{i}(x))|F_{i}(\delta ,x)|\leq 2|E(\delta
,y_{i}^{\delta }(x))|+2M(\delta ^{2}\epsilon _{i}(x))
\end{equation*}%
\begin{equation*}
|F_{i}(\delta ,x)|\leq \frac{2|E(\delta ,y_{i}^{\delta }(x))|+2M(\delta
^{2}\epsilon _{i}(x))}{(1-4M\delta \epsilon _{i}(x))}
\end{equation*}

from which we get that%
\begin{equation}
F_{i}(\delta ,x)=O_{C^{0}}(\delta ^{2}).
\end{equation}

Now we prove that 
\begin{equation*}
\frac{dF_{i}(\delta ,x)}{dx}=O_{C^{0}}(\delta ^{2}).
\end{equation*}

Recalling $(\ref{part3})$ we have%
\begin{equation}
\frac{dF_{i}(\delta ,x)}{dx}=\frac{1}{T_{\delta }^{\prime }(y_{i}^{\delta
}(x))}-\frac{1}{T_{0}^{\prime }(y_{i}^{0}(x))}+\delta \frac{%
y_{i}^{0}(x)^{\prime }\dot{T}^{\prime }(y_{i}^{0}(x))T_{0}^{\prime
}(y_{i}^{0}(x))-y_{i}^{0}(x)^{\prime }\dot{T}(y_{i}^{0}(x))T_{0}^{^{\prime
\prime }}(y_{i}^{0}(x))}{(T_{0}^{\prime }(y_{i}^{0}(x)))^{2}}.
\end{equation}

Now we analyze the fist part of the right hand side%
\begin{equation}\label{for_later}
\frac{1}{T_{\delta }^{\prime }(y_{i}^{\delta }(x))}-\frac{1}{T_{0}^{\prime
}(y_{i}^{0}(x))}=[\frac{1}{T_{\delta }^{\prime }(y_{i}^{\delta }(x))}-\frac{1%
}{T_{\delta }^{\prime }(y_{i}^{0}(x))}]+[\frac{1}{T_{\delta }^{\prime
}(y_{i}^{0}(x))}-\frac{1}{T_{0}^{\prime }(y_{i}^{0}(x))}]
\end{equation}

where for the first summand, since $y_{i}^{\delta }(x)=y_{i}^{0}(x)+\delta
\epsilon _{i}(x)+F_{i}(\delta ,x)$ we get%
\begin{eqnarray*}
\lbrack \frac{1}{T_{\delta }^{\prime }(y_{i}^{\delta }(x))}-\frac{1}{%
T_{\delta }^{\prime }(y_{i}^{0}(x))}] &=&[\frac{1}{T_{\delta }^{\prime
}(y_{i}^{0}(x)+\delta \epsilon _{i}(x)+F_{i}(\delta ,x))}-\frac{1}{T_{\delta
}^{\prime }(y_{i}^{0}(x))}] \\
&=&\frac{-T_{\delta }^{^{\prime \prime }}(y_{i}^{0}(x))}{T_{\delta }^{\prime
}(y_{i}^{0}(x))^{2}}[\delta \epsilon _{i}(x)+F_{i}(\delta ,x)] \\
&&+\frac{d}{dx}[\frac{-T_{\delta }^{^{\prime \prime }}}{T_{\delta }^{\prime
}{}^{2}}](\xi )\frac{1}{2}[\delta \epsilon _{i}(x)+F_{i}(\delta ,x)]^{2}
\end{eqnarray*}

for some $\xi \in S^{1}$ (depending on $x$), by Lagrange theorem. Then we get%
\begin{eqnarray*}
\lbrack \frac{1}{T_{\delta }^{\prime }(y_{i}^{\delta }(x))}-\frac{1}{%
T_{\delta }^{\prime }(y_{i}^{0}(x))}] &=&\frac{-T_{\delta }^{^{\prime \prime
}}(y_{i}^{0}(x))}{T_{\delta }^{\prime }(y_{i}^{0}(x))^{2}}[\delta \frac{-%
\dot{T}(y_{i}^{0}(x))}{T_{0}^{\prime }(y_{i}^{0}(x))}+F_{i}(\delta ,x)] \\
&&+\frac{d}{dx}[\frac{-T_{\delta }^{^{\prime \prime }}}{T_{\delta }^{\prime
}{}^{2}}](\xi )[\delta \epsilon _{i}(x)+F_{i}(\delta ,x)]^{2} \\
&=&\frac{-[T_{0}^{\prime \prime }(y_{i}^{0}(x))+\delta \dot{T}^{\prime
\prime }(y_{i}^{0}(x))+E^{\prime \prime }(\delta ,y_{i}^{0}(x))]}{%
[T_{0}^{\prime }(y_{i}^{0}(x))+\delta \dot{T}^{\prime
}(y_{i}^{0}(x))+E^{\prime }(\delta ,y_{i}^{0}(x))]^{2}}[\delta \frac{-\dot{T}%
(y_{i}^{0}(x))}{T_{0}^{\prime }(y_{i}^{0}(x))}+F_{i}(\delta ,x)] \\
&&+\frac{d}{dx}[\frac{-T_{\delta }^{^{\prime \prime }}}{T_{\delta }^{\prime
}{}^{2}}](\xi )\frac{1}{2}([\delta \epsilon _{i}(x)+F_{i}(\delta ,x)]^{2}).
\end{eqnarray*}%
Since \ $\frac{d}{dx}[\frac{-T_{\delta }^{^{\prime \prime }}}{T_{\delta
}^{\prime }{}^{2}}]$ is bounded we get%
\begin{eqnarray*}
\lbrack \frac{1}{T_{\delta }^{\prime }(y_{i}^{\delta }(x))}-\frac{1}{%
T_{\delta }^{\prime }(y_{i}^{0}(x))}] &=&\frac{\delta \dot{T}%
(y_{i}^{0}(x))[T_{0}^{\prime \prime }(y_{i}^{0}(x))+\delta \dot{T}^{\prime
\prime }(y_{i}^{0}(x))+E^{\prime \prime }(\delta ,y_{i}^{0}(x))]}{%
T_{0}^{\prime }(y_{i}^{0}(x))[T_{0}^{\prime }(y_{i}^{0}(x))+\delta \dot{T}%
^{\prime }(y_{i}^{0}(x))+E^{\prime }(\delta ,y_{i}^{0}(x))]^{2}}%
+O_{C^{0}}(\delta ^{2}) \\
&=&\frac{\delta T_{0}^{\prime \prime }(y_{i}^{0}(x))\dot{T}(y_{i}^{0}(x))}{%
[T_{0}^{\prime }(y_{i}^{0}(x))]^{3}+2\delta \dot{T}^{\prime
}(y_{i}^{0}(x))T_{0}^{\prime }(y_{i}^{0}(x))+O_{C^{0}}(\delta ^{2})}%
+O_{C^{0}}(\delta ^{2}) \\
&=&\frac{\delta T_{0}^{\prime \prime }(y_{i}^{0}(x))\dot{T}(y_{i}^{0}(x))}{%
[T_{0}^{\prime }(y_{i}^{0}(x))]^{3}}+O_{C^{0}}(\delta ^{2}).
\end{eqnarray*}

The second summand is expanded similarly as follows%
\begin{eqnarray}
\lbrack \frac{1}{T_{\delta }^{\prime }(y_{i}^{0}(x))}-\frac{1}{T_{0}^{\prime
}(y_{i}^{0}(x))}] &=&\nonumber [\frac{1}{T_{0}^{\prime }(y_{i}^{0}(x))+\delta \dot{T}%
^{\prime }(y_{i}^{0}(x))+E^{\prime }(\delta ,y_{i}^{0}(x))}-\frac{1}{%
T_{0}^{\prime }(y_{i}^{0}(x))}] \\
&=& \nonumber
\frac{-1}{T_{0}^{\prime }(y_{i}^{0}(x))^{2}}[\delta \dot{T}^{\prime
}(y_{i}^{0}(x))+E^{\prime }(\delta ,y_{i}^{0}(x))] \\
&& \nonumber +\frac{1}{2}\frac{2}{(\xi )^{3}}[\delta \dot{T}^{\prime
}(y_{i}^{0}(x))+E^{\prime }(\delta ,y_{i}^{0}(x))]^{2} \\
&=& \label{for_later2}\frac{-\delta \dot{T}^{\prime }(y_{i}^{0}(x))T_{0}^{\prime }(y_{i}^{0}(x))%
}{T_{0}^{\prime }(y_{i}^{0}(x))^{3}}+O_{C^{0}}(\delta ^{2}).
\end{eqnarray}

since applying the Lagrange theorem $|\xi -T_{0}^{\prime }(y_{i}^{0}(x))|$
can be small as wanted when $\delta $ is small enough and then $\xi $ is
bounded away from $0$. 

Putting the two summands expanded together and recalling that $\frac{%
dy_{i}^{\delta }(x)}{dx}=\frac{1}{T_{\delta }^{\prime }(T_{\delta
,i}^{-1}(x))}=\frac{1}{T_{\delta }^{\prime }(y_{i}^{\delta}(x))}$ we get 
\begin{eqnarray}\label{xxx}
\frac{dF_{i}(\delta ,x)}{dx} &=&\frac{\delta T_{0}^{\prime \prime
}(y_{i}^{0}(x))\dot{T}(y_{i}^{0}(x))}{[T_{0}^{\prime }(y_{i}^{0}(x))]^{3}}-%
\frac{\delta \dot{T}^{\prime }(y_{i}^{0}(x))T_{0}^{\prime }(y_{i}^{0}(x))}{%
T_{0}^{\prime }(y_{i}^{0}(x))^{3}} \\
&&\nonumber+\delta \frac{d}{dx}[\frac{\dot{T}(y_{i}^{0}(x))}{T_{0}^{\prime
}(y_{i}^{0}(x))}]+O_{C^{0}}(\delta ^{2}) \\
&=&\nonumber \frac{\delta T_{0}^{\prime \prime }(y_{i}^{0}(x))\dot{T}(y_{i}^{0}(x))}{%
[T_{0}^{\prime }(y_{i}^{0}(x))]^{3}}-\frac{\delta \dot{T}^{\prime
}(y_{i}^{0}(x))T_{0}^{\prime }(y_{i}^{0}(x))}{T_{0}^{\prime
}(y_{i}^{0}(x))^{3}} \\
&&\nonumber +\delta \frac{y_{i}^{0}(x)^{\prime }\dot{T}^{\prime
}(y_{i}^{0}(x))T_{0}^{\prime }(y_{i}^{0}(x))-y_{i}^{0}(x)^{\prime
}T_{0}^{^{\prime \prime }}(y_{i}^{0}(x))\dot{T}(y_{i}^{0}(x))}{%
(T_{0}^{\prime }(y_{i}^{0}(x)))^{2}}+O_{C^{0}}(\delta ^{2}) \\
&=& \label{xxx2}
O_{C^{0}}(\delta ^{2})
\end{eqnarray}%
proving the statement.
\end{proof}

The following simple lemma on the convergence of compositions of functions
will also be useful.

\begin{lemma}
\label{Lclaim} 
Let $\overline{\delta}>0, K_0>0, K_1>0.$ Suppose the following hold when $\delta\in[0,\overline{\delta})$:

$f_{\delta},g_{\delta}\in C^1(S^{1},\mathbb{R})$, $||f_{\delta }||_{C^{1}}\le K_0$, $||g_{\delta }||_{C^{1}}\le K_0$,  $||f_{\delta }-f_{0}||_{C^{1}}\rightarrow 0 $ and $ ||g_{\delta
}-g_{0}||_{C^{1}}\rightarrow 0$.
Then%
\begin{equation}\label{claim00}
\lim_{\delta\to 0}|| f_{\delta }\circ g_{\delta }-f_{0}\circ g_{0}||_{C^{1}}=0.
\end{equation}
If, in addition,
$||f_{\delta }-f_{0}||_{C^{0}}\leq K_{1}\delta ,$ and $||g_{\delta}-g_{0}||_{C^{0}}\leq K_{1}\delta $  then there is $%
K_{2}\geq 0$ such that for $\delta \in \lbrack 0,\eta ]$%
\begin{equation}
|| f_{\delta }\circ g_{\delta }-f_{0}\circ g_{0}||_{C^{0}}\leq K_{2}\delta
\label{claim0}
\end{equation}%
with $K_{2}$ only depending on $Lip(f_{0})$ and $K_{1}.$
\end{lemma}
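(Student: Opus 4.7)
The plan is to handle the two parts of Lemma \ref{Lclaim} by standard add-and-subtract arguments, with the qualitative $C^1$ statement requiring only slightly more care than the quantitative $C^0$ one. I would do the quantitative $C^0$ bound first, since it is the simpler of the two and motivates the decomposition used in the harder part.

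For \eqref{claim0}, I would write
\begin{equation*}
f_\delta\circ g_\delta - f_0\circ g_0 = (f_\delta-f_0)\circ g_\delta + \bigl(f_0\circ g_\delta - f_0\circ g_0\bigr),
\end{equation*}
so that
\begin{equation*}
\|f_\delta\circ g_\delta - f_0\circ g_0\|_{C^0} \leq \|f_\delta - f_0\|_{C^0} + \mathrm{Lip}(f_0)\,\|g_\delta - g_0\|_{C^0} \leq \bigl(1+\mathrm{Lip}(f_0)\bigr)K_1\delta,
\end{equation*}
using that $f_0\in C^1(S^1)$ is Lipschitz with constant bounded by $\|f_0\|_{C^1}\leq K_0$. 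This gives \eqref{claim0} with $K_2 = \bigl(1+\mathrm{Lip}(f_0)\bigr)K_1$.

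For \eqref{claim00}, the $C^0$ piece follows from the same triangle inequality, replacing the Lipschitz estimate by uniform continuity of $f_0$ together with $\|g_\delta - g_0\|_{C^0}\to 0$ and $\|f_\delta-f_0\|_{C^0}\to 0$. For the derivative, I would compute
\begin{equation*}
(f_\delta\circ g_\delta)' - (f_0\circ g_0)' = f_\delta'(g_\delta)\,g_\delta' - f_0'(g_0)\,g_0'
\end{equation*}
and split it into three pieces by telescoping:
\begin{equation*}
\bigl[f_\delta'(g_\delta) - f_0'(g_\delta)\bigr]g_\delta' + f_0'(g_\delta)\bigl[g_\delta' - g_0'\bigr] + \bigl[f_0'(g_\delta) - f_0'(g_0)\bigr]g_0'.
\end{equation*}
The first term is bounded in $C^0$ by $\|f_\delta'-f_0'\|_{C^0}\|g_\delta'\|_{C^0}\leq K_0\|f_\delta-f_0\|_{C^1}\to 0$; the second by $\|f_0'\|_{C^0}\|g_\delta'-g_0'\|_{C^0}\leq K_0\|g_\delta-g_0\|_{C^1}\to 0$; and the third by $\|g_0'\|_{C^0}\|f_0'\circ g_\delta - f_0'\circ g_0\|_{C^0}$, which tends to zero because $f_0'$ is uniformly continuous on $S^1$ and $\|g_\delta-g_0\|_{C^0}\to 0$.

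The only place where genuine care is needed is the third piece in the $C^1$ derivative estimate, where we cannot simply invoke a Lipschitz bound on $f_0'$ (we only have $f_0\in C^1$, so $f_0'$ is merely continuous); the key observation is that the compactness of $S^1$ guarantees uniform continuity of $f_0'$, which is exactly what converts $C^0$-closeness of $g_\delta$ and $g_0$ into $C^0$-closeness of $f_0'\circ g_\delta$ and $f_0'\circ g_0$. Every other estimate is just the boundedness hypothesis $\|f_\delta\|_{C^1},\|g_\delta\|_{C^1}\leq K_0$ combined with convergence in $C^1$.
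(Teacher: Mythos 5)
Your proof is correct and follows essentially the same route as the paper: the same add-and-subtract decomposition $(f_\delta-f_0)\circ g_\delta + (f_0\circ g_\delta - f_0\circ g_0)$ for the quantitative $C^0$ bound with $K_2=(1+\mathrm{Lip}(f_0))K_1$, and the same key ingredient (uniform continuity of $f_0'$ on the compact circle) to handle the composition term in the derivative estimate. The paper organises the derivative estimate as a two-step split rather than your single three-term telescope, but the resulting terms and the bounds used on each are identical.
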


\begin{proof}
To prove $(\ref{claim00})$ we can write
\begin{eqnarray*}
||\ (f_{\delta }\circ g_{\delta }-f_{0}\circ g_{0})^{\prime }||_{\infty }
&=&||\ (f_{\delta }\circ g_{\delta }-f_{0}\circ g_{\delta }+f_{0}\circ
g_{\delta }-f_{0}\circ g_{0})^{\prime }||_{\infty } \\
&\le&||\ (f_{\delta }\circ g_{\delta }-f_{0}\circ g_{\delta })^{\prime
}||_{\infty }+||(f_{0}\circ g_{\delta }-f_{0}\circ g_{0})^{\prime
}||_{\infty }
\end{eqnarray*}%
and while it is obvious that the first summand goes to $0$, for the second we have
\begin{eqnarray*}
&&||(f_{0}\circ g_{\delta }-f_{0}\circ g_{0})^{\prime }||_{\infty
}=||g_{\delta }^{\prime }f_{0}^{\prime }\circ g_{\delta }-g_{0}^{\prime
}f_{0}^{\prime }\circ g_{0}||_{\infty } \\
&\leq &||g_{\delta }^{\prime }f_{0}^{\prime }\circ g_{\delta }-g_{\delta
}^{\prime }f_{0}^{\prime }\circ g_{0}||_{\infty }+||g_{\delta }^{\prime
}f_{0}^{\prime }\circ g_{0}-g_{0}^{\prime }f_{0}^{\prime }\circ
g_{0}||_{\infty }  \notag
\end{eqnarray*}

where $||g_{\delta }^{\prime }f_{0}^{\prime }\circ g_{\delta }-g_{\delta
}^{\prime }f_{0}^{\prime }\circ g_{0}||_{\infty }\rightarrow 0$ because of
the uniform continuity of \ $f_{0}^{\prime }$ and the uniform bound on $%
||g_{\delta }||_{C^{1}}$while $||g_{\delta }^{\prime }f_{0}^{\prime }\circ
g_{0}-g_{0}^{\prime }f_{0}^{\prime }\circ g_{0}||_{\infty }\rightarrow 0$
since $g_{\delta }\rightarrow g_{0}$ in $C^{1},$ and $(\ref{claim00})$ is
proved.

To prove $(\ref{claim0})$ we write
\begin{eqnarray*}
||f_{\delta }\circ g_{\delta }-f_{0}\circ g_{0}||_{\infty } &=&||f_{\delta
}\circ g_{\delta }-f_{0}\circ g_{\delta }+f_{0}\circ g_{\delta }-f_{0}\circ
g_{0}||_{\infty } \\
&=&||f_{\delta }\circ g_{\delta }-f_{0}\circ g_{\delta }||_{\infty
}+||f_{0}\circ g_{\delta }-f_{0}\circ g_{0}||_{\infty }
\end{eqnarray*}%
and note that%
\begin{equation*}
||f_{0}\circ g_{\delta }-f_{0}\circ g_{0}||_{\infty }\leq
Lip(f_{0})K_{1}\delta .
\end{equation*}%
while%
\begin{equation*}
||f_{\delta }\circ g_{\delta }-f_{0}\circ g_{\delta }||_{\infty }\leq
K_{1}\delta
\end{equation*}%
and $(\ref{claim0})$ is proved.
\end{proof}

We now prove Proposition \ref{mainprop}.
\begin{proof}[{Proof of Proposition \protect\ref{mainprop}} ]
We again denote by $\{y_{i}^{\delta }\}_{i=1}^{d}:=T_{\delta }^{-1}(x)$ and $%
\{y_{i}^{0}\}_{i=1}^{d}:=T_{0}^{-1}(x)$ the $d$ preimages under $T_{\delta }$
and $T_{0}$, respectively, of a point $x\in X$. We can write
\begin{eqnarray}
\frac{L_{\delta }w(x)-L_{0}w(x)}{\delta } &=&  \frac{1}{\delta }\left(
\sum_{i=1}^{d}\frac{w(y_{i}^{\delta })}{T_{\delta }^{\prime }(y_{i}^{\delta
})}-\sum_{i=1}^{d}\frac{w(y_{i}^{0})}{T_{0}^{\prime }(y_{i}^{0})}\right)
\label{eq1} \\
&=&\nonumber \underbrace{\frac{1}{\delta }\left( \sum_{i=1}^{d}w(y_{i}^{\delta
})\left( \frac{1}{T_{\delta }^{\prime }(y_{i}^{\delta })}-\frac{1}{%
T_{0}^{\prime }(y_{i}^{\delta })}\right) \right) }_{=:(I)}+\underbrace{\frac{%
1}{\delta }\left( \sum_{i=1}^{d}\frac{w(y_{i}^{\delta })-w(y_{i}^{0})}{%
T_{0}^{\prime }(y_{i}^{\delta })}\right) }_{=:(II)} \\
&&\nonumber +\underbrace{\frac{1}{\delta }\left( \sum_{i=1}^{d}w(y_{i}^{0})\left(
\frac{1}{T_{0}^{\prime }(y_{i}^{\delta })}-\frac{1}{T_{0}^{\prime
}(y_{i}^{0})}\right) \right) }_{=:(III)}.
\end{eqnarray}%
In the following we will analyze the terms $(I),(II),(III)$ showing the convergence of these terms to the three summands in the right hand side of \eqref{formulaz} both in the $C^1$ topology (see \eqref{58}) and in the uniform sense required by \eqref{GL6}. 


\emph{Term (I):} For the first term we first differentiate the expansion\footnote{Here and below we say $F_\delta$ is $O_{C^k}(\delta^2)$ if$
\underset{\delta \in (0,\overline{\delta })}{\sup }\frac{\|F_\delta\|_{C^k}}{\delta ^{2}}<+\infty$ for any $k\geq 0$.}  $T_{\delta
}(x)=T_{0}(x)+\delta \dot{T} (x)+O_{C^{2}}(\delta^2)$ in $x$ to get:
\begin{equation}\label{s86}
T_{\delta }^{\prime }(x)=T_{0}^{\prime }(x)+\delta \dot{T} ^{\prime
}(x)+O_{C^{1}}(\delta^2).
\end{equation}%


We can then write
\begin{eqnarray}
\nonumber (I) &=&\frac{1}{\delta }\left( \sum_{i=1}^{d}w(y_{i}^{\delta })\left( \frac{1%
}{T_{\delta }^{\prime }(y_{i}^{\delta })}-\frac{1}{T_{0}^{\prime
}(y_{i}^{\delta })}\right) \right) \\ \nonumber
&=&\frac{1}{\delta }\left( \sum_{i=1}^{d}\frac{w(y_{i}^{\delta })}{T_{\delta
}^{\prime }(y_{i}^{\delta })}\left( 1-\frac{T_{\delta }^{\prime
}(y_{i}^{\delta })}{T_{0}^{\prime }(y_{i}^{\delta })}\right) \right) \\ \nonumber
&=&\frac{1}{\delta }\left( \sum_{i=1}^{d}\frac{w(y_{i}^{\delta })}{T_{\delta
}^{\prime }(y_{i}^{\delta })}\left( 1-\left( \frac{T_{0}^{\prime
}(y_{i}^{\delta })+\delta \dot{T} ^{\prime }(y_{i}^{\delta
})+O_{C^{1}}(\delta^2)}{T_{0}^{\prime }(y_{i}^{\delta })}\right) \right)
\right) \\
&=&\left( -\sum_{i=1}^{d}\frac{w(y_{i}^{\delta })\dot{T} ^{\prime
}(y_{i}^{\delta })}{T_{\delta }^{\prime }(y_{i}^{\delta })T_{0}^{\prime
}(y_{i}^{\delta })}\right) +\frac{O_{C^{1}}(\delta^2)}{\delta}. \label{71}
\end{eqnarray}

Since $w(\cdot )\dot{T} ^{\prime }(\cdot )/(T_{\delta }^{\prime
}(\cdot )T_{0}^{\prime }(\cdot ))$ is $C^{1}$ on the circle  with uniformly
bounded norm when $\delta $ is small enough,  we have that
\begin{eqnarray}
\nonumber\lim_{\delta \rightarrow 0}\frac{1}{\delta }\left(
\sum_{i=1}^{d}w(y_{i}^{\delta }(\cdot ))\left( \frac{1}{T_{\delta }^{\prime
}(y_{i}^{\delta }(\cdot ))}-\frac{1}{T_{0}^{\prime }(y_{i}^{\delta }(\cdot ))}\right)
\right) &=&\lim_{\delta \rightarrow 0}\left( -\sum_{i=1}^{d}\frac{%
w(y_{i}^{\delta }(\cdot ))\dot{T} ^{\prime }(y_{i}^{\delta }(\cdot ))}{T_{\delta }^{\prime
}(y_{i}^{\delta }(\cdot ))T_{0}^{\prime }(y_{i}^{\delta }(\cdot ))}\right) \\
\label{termIlimit}&=&-L_{0}\left( \frac{w\dot{T} ^{\prime }}{T_{0}^{\prime }}\right),
\end{eqnarray}%
 with convergence in the $C^{1}$ topology, using Lemma \ref{tec},  $(\ref%
{claim00})$, and \eqref{s86} in the final equality.
This proves that $(I)$ converges in the $C^{1}$ topology to the first summand of \eqref{formulaz}.

We now turn to the uniform convergence in \eqref{GL6} for term (I). 
By uniform expansivity of $T_0$ and \eqref{s86} there is $K_{1}$ such that  
\begin{equation}\label{75}
\|\frac{w(\cdot
)\dot{T} ^{\prime }(\cdot )}{T_{\delta }^{\prime }(\cdot )T_{0}^{\prime
}(\cdot )}-\frac{w(\cdot )\dot{T} ^{\prime }(\cdot )}{(T_{0}^{\prime
}(\cdot ))^{2}}\|_{C^{0}}  
 \leq \|\frac{1}{T_{\delta }^{\prime }(\cdot)}-\frac{1}{T_{0}^{\prime}(\cdot )} \|_{C^0} \|\frac{w(\cdot )\dot{T} ^{\prime }(\cdot )}{T_{0}^{\prime}(\cdot )}\|_{C^{0}}
\leq K_{1}\delta 
\end{equation}
when $\delta $ is sufficiently small.
By the Sobolev Embedding Theorem,
$K_1$ is uniform for $w\in B_{W^{2,1}(0,1)}$.
We apply \eqref{75} and \eqref{claim0} to $L_\delta(w\dot{T}'/T_0')=\left( -\sum_{i=1}^{d}\frac{w(y_{i}^{\delta })\dot{T} ^{\prime
}(y_{i}^{\delta })}{T_{\delta }^{\prime }(y_{i}^{\delta })T_{0}^{\prime
}(y_{i}^{\delta })}\right)$ and $L_0(w\dot{T}'/T_0')=\left( -\sum_{i=1}^{d}\frac{w(y_{i}^{0})\dot{T} ^{\prime
}(y_{i}^{0})}{T_{0 }^{\prime }(y_{i}^{0})T_{0}^{\prime
}(y_{i}^{0})}\right)$
to obtain
\begin{equation}
\label{tocompare}
\|L_\delta(w\dot{T}'/T_0')-L_0(w\dot{T}'/T_0')\|_{C^0}=O(\delta).
\end{equation}
Note that the constant $K_{2}$ in $(\ref{claim0})$ only depends on $\mathrm{Lip}(f_{0})$ and $K_{1}$.

Substituting $L_\delta(w\dot{T}'/T_0')=\left( -\sum_{i=1}^{d}\frac{w(y_{i}^{\delta })\dot{T} ^{\prime
}(y_{i}^{\delta })}{T_{\delta }^{\prime }(y_{i}^{\delta })T_{0}^{\prime
}(y_{i}^{\delta })}\right)$, into \eqref{71}, we see that as $\delta \to 0$ one has $$\frac{1}{\delta }\left( \sum_{i=1}^{d}w(y_{i}^{\delta })\left( \frac{1%
}{T_{\delta }^{\prime }(y_{i}^{\delta })}-\frac{1}{T_{0}^{\prime
}(y_{i}^{\delta })}\right) \right)=L_\delta(w\dot{T}'/T_0') +\frac{O_{C^{1}}(\delta^2)}{\delta}.$$
By \eqref{tocompare}, we obtain
\begin{equation*}
\left\|\frac{1}{\delta }\left( \sum_{i=1}^{d}w(y_{i}^{\delta })\left( \frac{1}{%
T_{\delta }^{\prime }(y_{i}^{\delta })}-\frac{1}{T_{0}^{\prime
}(y_{i}^{\delta })}\right) \right) +L_{0}\left( \frac{w\dot{T} ^{\prime }}{%
T_{0}^{\prime }}\right) \right\|_{C^{0}}=O(\delta),
\end{equation*}%
and again by the Sobolev Embedding Theorem, and the uniformity of constants noted immediately after \eqref{tocompare} this is uniform for $w\in B_{W^{2,1}}(0,1)$.

\emph{Term (II):} We prove the convergence of  the second term of $(\ref{eq1})$ to the second summand of \eqref{formulaz} both in the sense of \eqref{58} and \eqref{GL6}.  Suppose $w\in C^{2}$. Using the Taylor expansion with Lagrange remainder   we have that 
\begin{eqnarray*}
w(y_{i}^{\delta }) &=&w(y_{i}^{0}+y_{i}^{\delta }-y_{i}^{0}) \\
&=&w(y_{i}^{0})+w^{\prime }(y_{i}^{0}) \lbrack y_{i}^{\delta
}-y_{i}^{0}]+\frac 12 w^{\prime \prime }(\xi)  \left(\lbrack y_{i}^{\delta
}-y_{i}^{0}]\right)^{2},
\end{eqnarray*}
where $\xi$ lies between $y_{i}^{\delta }$ and $y_{i}^{0}$.

%

Using Lemma \ref{tec} we get
\begin{equation*}
w(y_{i}^{\delta })=w(y_{i}^{0})+w^{\prime }(y_{i}^{0}) \left[-\delta \frac{%
\dot{T}(y_{i}^{0})}{T_{0}^{\prime }(y_{i}^{0})}+O_{C^{0}}(\delta
^{2})\right]+\frac 12 w^{\prime \prime }(\xi )\left(\delta \left( -\frac{\dot{T}(y_{i}^{0})}{%
T_{0}^{\prime }(y_{i}^{0})}\right) +O_{C^{0}}(\delta ^{2}))\right)^{2}.
\end{equation*}
Since $w''$ is uniformly bounded, 
 we get
\begin{equation*}
w(y_{i}^{\delta })=w(y_{i}^{0})+w^{\prime }(y_{i}^{0})-\delta \lbrack \frac{%
\dot{T}(y_{i}^{0})}{T_{0}^{\prime }(y_{i}^{0})}]+O_{C^0}(\delta^2).
\end{equation*}

%

Thus,
\begin{eqnarray*}
(II)=\frac{1}{\delta }\sum_{i=1}^{d}\frac{w(y_{i}^{\delta })-w(y_{i}^{0})}{%
T_{0}^{\prime }(y_{i}^{\delta })} &=&-\frac{1}{\delta }\sum_{i=1}^{d}\frac{%
\delta w^{\prime }(y_{i}^{0})\frac{\dot{T}(y_{i}^{0})}{T_{0}^{\prime
}(y_{i}^{0})}+O_{C^{0}}(\delta ^{2})}{T_{0}^{\prime }(y_{i}^{\delta })} \\
&=&-\sum_{i=1}^{d}\frac{\dot{T}(y_{i}^{0})w^{\prime }(y_{i}^{0})}{%
T_{0}^{\prime }(y_{i}^{0})T_{0}^{\prime }(y_{i}^{\delta })}+\frac{%
O_{C^{0}}(\delta ^{2})}{\delta }.
\end{eqnarray*}%
As in (\ref{termIlimit}), by Lemma \ref{tec}, one has 
in the $C^{0}$
topology%
\begin{equation}\label{IIlim}
\lim_{\delta \rightarrow 0}\frac{1}{\delta }\sum_{i=1}^{d}\frac{%
w(y_{i}^{\delta })-w(y_{i}^{0})}{T_{0}^{\prime }(y_{i}^{\delta })}%
=-L_{0}\left( \frac{\dot{T}w^{\prime }}{T_{0}^{\prime }}\right)(x).
\end{equation}
%

Now we prove the convergence in $C^{1}.$  
To begin this we first prove that in
the $C^{1}$ topology%
\begin{equation}
\lim_{\delta \rightarrow 0}\frac{1}{\delta }\sum_{i=1}^{d}\frac{%
w(y_{i}^{\delta })-w(y_{i}^{0})}{T_{0}^{\prime }(y_{i}^{\delta })}%
=\lim_{\delta \rightarrow 0}\frac{1}{\delta }\sum_{i=1}^{d}\frac{%
w(y_{i}^{\delta })-w(y_{i}^{0})}{T_{0}^{\prime }(y_{i}^{0})}.  \label{877}
\end{equation}
We note that 
\begin{eqnarray*}
\frac{1}{\delta }\sum_{i=1}^{d}[\frac{w(y_{i}^{\delta })-w(y_{i}^{0})}{%
T_{0}^{\prime }(y_{i}^{\delta })}-\frac{w(y_{i}^{\delta })-w(y_{i}^{0})}{%
T_{0}^{\prime }(y_{i}^{0})}] &=&\frac{1}{\delta }\sum_{i=1}^{d}\frac{%
[-T_{0}^{\prime }(y_{i}^{\delta })+T_{0}^{\prime
}(y_{i}^{0})][w(y_{i}^{\delta })-w(y_{i}^{0})]}{T_{0}^{\prime
}(y_{i}^{\delta })T_{0}^{\prime }(y_{i}^{0})} \\
&=&\frac{1}{\delta }\sum_{i=1}^{d}\frac{[\delta T_{0}^{\prime \prime
}(y_{i}^{0})\frac{\dot{T}(y_{i}^{0})}{T_{0}^{\prime }(y_{i}^{0})}%
+O_{C^{1}}(\delta ^{2})][w(y_{i}^{\delta })-w(y_{i}^{0})]}{T_{0}^{\prime
}(y_{i}^{\delta })T_{0}^{\prime }(y_{i}^{0})},
\end{eqnarray*}
using Lemma \ref{tec} for the second equality.
Since $w(y_{i}^{\delta })\rightarrow w(y_{i}^{0})$ in $C^{1}$ then by the expression immediately above, $\frac{1}{\delta }\sum_{i=1}^{d}[\frac{w(y_{i}^{\delta })-w(y_{i}^{0})}{%
T_{0}^{\prime }(y_{i}^{\delta })}-\frac{w(y_{i}^{\delta })-w(y_{i}^{0})}{%
T_{0}^{\prime }(y_{i}^{0})}]$ tends to $0$ in $C^{1}$ 
and \eqref{877} is proved.
Using this we now upgrade the convergence of $(\ref{IIlim})$ to $C^{1}$.
By \eqref{877} it is sufficient to prove%
\begin{equation*}
\lim_{\delta \rightarrow 0}\frac{d}{dx}[\frac{w(y_{i}^{\delta })-w(y_{i}^{0})%
}{\delta T_{0}^{\prime }(y_{i}^{0})}]=-\frac{d}{dx}\frac{\dot{T}%
(y_{i}^{0})w^{\prime }(y_{i}^{0})}{T_{0}^{\prime }(y_{i}^{0})^{2}}
\end{equation*}

in the $C^{0}$ topology. By uniform expansivity of $T_0$ this is equivalent to proving%
\begin{eqnarray}\nonumber
\lim_{\delta \rightarrow 0}\frac{d}{dx}[\frac{w(y_{i}^{\delta })-w(y_{i}^{0})%
}{\delta }] &=&\label{xxx1} -\frac{d}{dx}\frac{\dot{T}(y_{i}^{0})w^{\prime }(y_{i}^{0})}{%
T_{0}^{\prime }(y_{i}^{0})} \\
&=&-\frac{1}{T_{0}^{\prime }(y_{i}^{0})}\frac{[\dot{T}^{\prime
}(y_{i}^{0})w^{\prime }(y_{i}^{0})+w^{\prime \prime }(y_{i}^{0})\dot{T}%
(y_{i}^{0})]T_{0}^{\prime }(y_{i}^{0})-\dot{T}(y_{i}^{0})w^{\prime
}(y_{i}^{0})T_{0}^{\prime \prime }(y_{i}^{0})}{T_{0}^{\prime }(y_{i}^{0})^{2}%
}
\end{eqnarray}
in the $C^{0}$ topology. Thus let us compute%
\begin{eqnarray}
\nonumber\lim_{\delta \rightarrow 0}\frac{d}{dx}[\frac{w(y_{i}^{\delta })-w(y_{i}^{0})%
}{\delta }] &=&\lim_{\delta \rightarrow 0}\frac{\frac{1}{T_{\delta }^{\prime
}(y_{i}^{\delta })}w^{\prime }(y_{i}^{\delta })-\frac{1}{T_{0}^{\prime
}(y_{i}^{0})}w^{\prime }(y_{i}^{0})}{\delta } \\
\nonumber&=&\lim_{\delta \rightarrow 0}\frac{\frac{1}{T_{\delta }^{\prime
}(y_{i}^{\delta })}w^{\prime }(y_{i}^{\delta })-\frac{1}{T_{\delta }^{\prime
}(y_{i}^{\delta })}w^{\prime }(y_{i}^{0})}{\delta } \\
\label{split}&&+\lim_{\delta \rightarrow 0}\frac{\frac{1}{T_{\delta }^{\prime
}(y_{i}^{\delta })}w^{\prime }(y_{i}^{0})-\frac{1}{T_{0}^{\prime }(y_{i}^{0})%
}w^{\prime }(y_{i}^{0})}{\delta }.
\end{eqnarray}

To handle the first summand in \eqref{split} we remark that  
 by Taylor expansion with first-order Lagrange remainder we can obtain 
\begin{equation*}
w^{\prime }(y_{i}^{\delta })=
    [w^{\prime }(y_{i}^{0})+w^{\prime \prime }(y_{i}^{0})(y_{i}^{\delta
}-y_{i}^{0})]+{(w^{\prime \prime }(\xi )-w^{\prime \prime
}(y_{i}^{0}))(y_{i}^{\delta }-y_{i}^{0})}.
\end{equation*}
 with $\xi$ between $y_{i}^{0}$ and $y_{i}^{\delta}$.
 By the uniform continuity of $w''$ we get 
 $${(w^{\prime \prime }(\xi )-w^{\prime \prime
}(y_{i}^{0}))}\to 0$$
and then 
$${(w^{\prime \prime }(\xi )-w^{\prime \prime
}(y_{i}^{0}))(y_{i}^{\delta }-y_{i}^{0})}=o(\delta).$$ uniformly on the circle, then by Lemma \ref{tec} again
\begin{equation*}
w^{\prime }(y_{i}^{\delta })=w^{\prime }(y_{i}^{0})+w^{\prime \prime
}(y_{i}^{0})\delta \lbrack -\frac{\dot{T}(y_{i}^{0})}{T_{0}^{\prime
}(y_{i}^{0})}]+h_{2}(x,\delta )
\end{equation*}

with%
\begin{equation*}
||\frac{h_{2}(x,\delta )}{\delta }||_{C^{0}}\rightarrow 0.
\end{equation*}
By this we also have that $w(y_{i}^{\delta
})\rightarrow w(y_{i}^{0})$ in $C^{1}.$

Thus we get%
\begin{eqnarray}
\lim_{\delta \rightarrow 0}\frac{\frac{1}{T_{\delta }^{\prime
}(y_{i}^{\delta })}w^{\prime }(y_{i}^{\delta })-\frac{1}{T_{\delta }^{\prime
}(y_{i}^{\delta })}w^{\prime }(y_{i}^{0})}{\delta } &=&  \nonumber
\lim_{\delta
\rightarrow 0}\frac{1}{T_{\delta }^{\prime }(y_{i}^{\delta })}\frac{%
w^{\prime }(y_{i}^{\delta })-w^{\prime }(y_{i}^{0})}{\delta } \\
&=&  \nonumber
\lim_{\delta \rightarrow 0}\frac{1}{T_{\delta }^{\prime }(y_{i}^{\delta })%
}\frac{w^{\prime \prime }(y_{i}^{0})\delta \lbrack -\frac{\dot{T}(y_{i}^{0})%
}{T_{0}^{\prime }(y_{i}^{0})}]+h_{2}(x,\delta )}{\delta } \\
&=&\frac{-w^{\prime \prime }(y_{i}^{0})\dot{T}(y_{i}^{0})T_{0}^{\prime
}(y_{i}^{0})}{T_{0}^{\prime }(y_{i}^{0})^{3}}\label{a1}
\end{eqnarray}

in the $C^{0}$ topology. The second summand in \eqref{split} is estimated as in the proof of Lemma \eqref{tec} (see the calculations from  \eqref{for_later} to \eqref{for_later2}) obtaining

\begin{eqnarray*}
\lbrack \frac{1}{T_{\delta }^{\prime }(y_{i}^{\delta}(x))}-\frac{1}{T_{0}^{\prime
}(y_{i}^{0}(x))}] =-\frac{
y_{i}^{0}{}^{\prime }\dot{T}^{\prime }(y_{i}^{0})T_{0}^{\prime
}(y_{i}^{0})-y_{i}^{0}{}^{\prime }T_{0}^{^{\prime \prime }}(y_{i}^{0})\dot{T}%
(y_{i}^{0})}{(T_{0}^{\prime }(y_{i}^{0}))^{2}}+O_{C^0}(\delta^2).
\end{eqnarray*}

Then 
\begin{equation}
\lim_{\delta \rightarrow 0}\frac{\frac{1}{T_{\delta }^{\prime
}(y_{i}^{\delta })}w^{\prime }(y_{i}^{0})-\frac{1}{T_{0}^{\prime }(y_{i}^{0})%
}w^{\prime }(y_{i}^{0})}{\delta }=-w^{\prime }(y_{i}^{0})\frac{%
y_{i}^{0}{}^{\prime }\dot{T}^{\prime }(y_{i}^{0})T_{0}^{\prime
}(y_{i}^{0})-y_{i}^{0}{}^{\prime }T_{0}^{^{\prime \prime }}(y_{i}^{0})\dot{T}%
(y_{i}^{0})}{(T_{0}^{\prime }(y_{i}^{0}))^{2}}\label{a2}
\end{equation}

Thus putting together $(\ref{a1})$ and $(\ref{a2}),$ recalling that $(y_{i}^{0})'=\frac1{T'_0(y^0_i)}$, $(\ref{xxx1})$   is
proved. Thus we have proved the $C^1 $ convergence of the limit \eqref{IIlim}.
This proves that $(II)$ converges in the $C^{1}$ topology to the second summand of \eqref{formulaz}

Now we estimate the $L^1$ uniform convergence of term (II)
to prove the corresponding part of \eqref{GL6}. We now suppose $w\in W^{2,1}$. \ Using the
Taylor formula with integral remainder we have that 
\begin{eqnarray}
\nonumber w(y_{i}^{\delta }(x)) &=&w(y_{i}^{0}(x)+y_{i}^{\delta }(x)-y_{i}^{0}(x)) \\
\label{OL1delta2}&=&w(y_{i}^{0}(x))+w^{\prime }(y_{i}^{0}(x))\lbrack y_{i}^{\delta
}(x)-y_{i}^{0}(x)]+\int_{y_{i}^{0}(x)}^{y_{i}^{\delta }(x)}w^{\prime \prime
}(t)(y_{i}^{\delta }(x)-t)dt.
\end{eqnarray}
Let us estimate the $L^{1}$ norm of the function $x\to \int_{y_{i}^{0}(x)}^{y_{i}^{\delta
}(x)}w^{\prime \prime }(t)(y_{i}^{\delta }(x)-t)dt$%
\begin{equation*}
\int_{S^{1}}\left|\int_{y_{i}^{0}(x)}^{y_{i}^{\delta }(x)}w^{\prime \prime
}(t)(y_{i}^{\delta }(x)-t)dt\right|dx\leq \int_{S^{1}}\int_{\min
(y_{i}^{0}(x),y_{i}^{\delta }(x))}^{\max (y_{i}^{0}(x),y_{i}^{\delta
}(x))}|w^{\prime \prime }(t)|\cdot|(y_{i}^{\delta }(x)-t)|dtdx
\end{equation*}

we change the order of integration and get (see figure \ref{fig:int})

\begin{figure}
\centering
\includegraphics[width=0.4\textwidth]{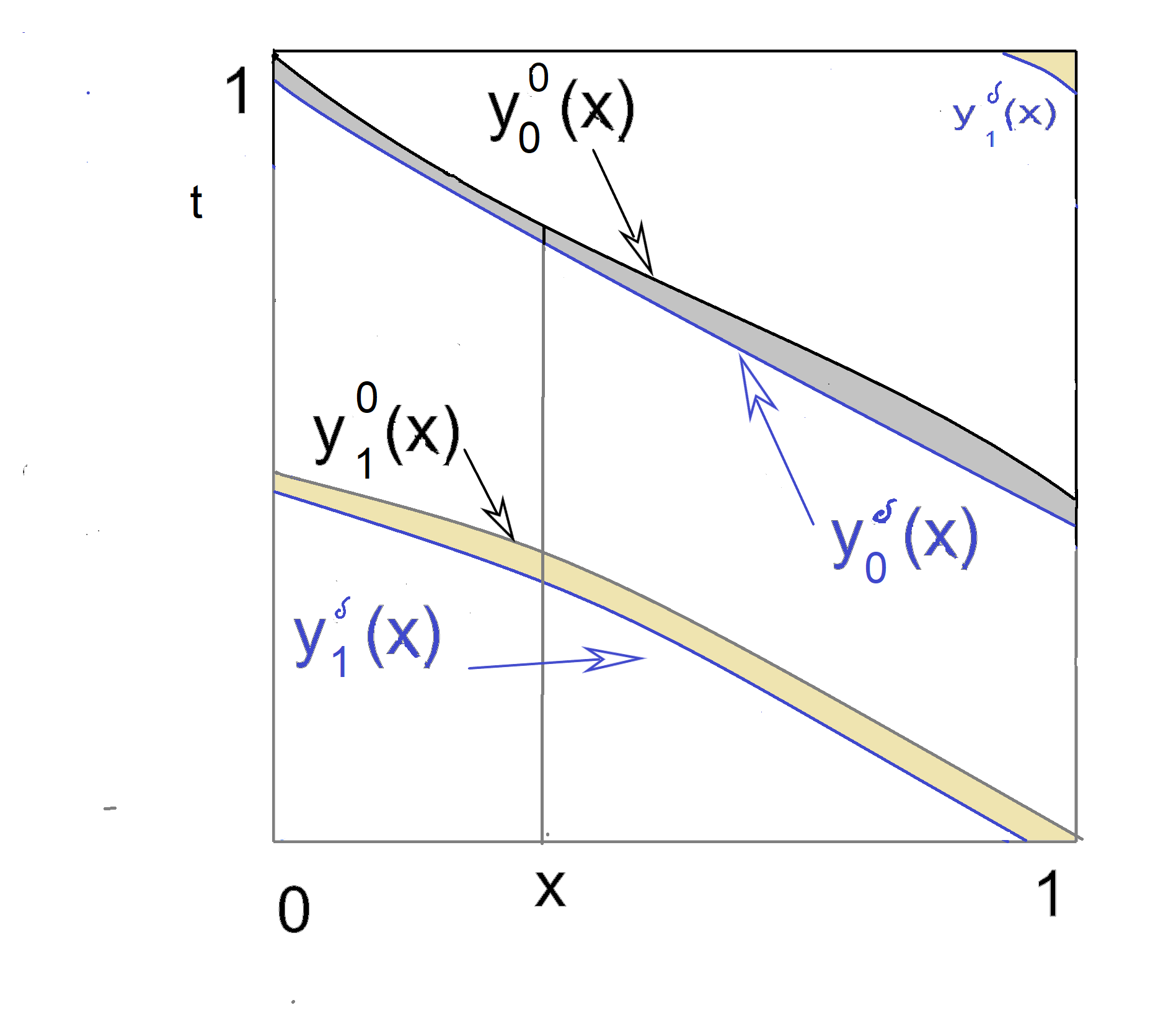}
\caption{
An example of the integration regions considered in \eqref{intfig}. The grey region 
 delimited by the inverse branches $y^0_0, y^\delta_0$ is the region on which the integral in the first term of \eqref{intfig} is defined, the union of the grey and the yellow region is the region (delimited by all inverse branches of the map before and after perturbation) on which which the integral the last term of \eqref{intfig} is defined.}
\label{fig:int}
\end{figure}

\begin{eqnarray}\label{intfig}
\int_{S^{1}}\int_{\min (y_{i}^{0}(x),y_{i}^{\delta }(x))}^{\max
(y_{i}^{0}(x),y_{i}^{\delta }(x))}|w^{\prime \prime }(t)||(y_{i}^{\delta
}(x)-t)| dt dx &\leq &\int_{S^{1}}\int_{\min (T_{0}(t),T_{\delta }(t))}^{\max
(T_{0}(t),T_{\delta }(t))}|w^{\prime \prime }(t)||(y_{i}^{\delta }(x)-t)| dx dt
\\ \nonumber
&=&\int_{S^{1}}|w^{\prime \prime }(t)|\int_{\min (T_{0}(t),T_{\delta
}(t))}^{\max (T_{0}(t),T_{\delta }(t))}|(y_{i}^{\delta }(x)-t)| dx dt.
\end{eqnarray}

Since $\delta\mapsto y_{i}^{\delta }(x)$ is uniformly Lipschitz and $%
|(y_{i}^{\delta }(x)-t)|=0$ \ at one of the two endpoints of the integration
interval $I_{\delta }(t):=[\min (T_{0}(t),T_{\delta }(t)),\max
(T_{0}(t),T_{\delta }(t))]$ we get that there is $K_{1}$ such that $%
\forall t$, $\max_{x\in I_{\delta }(t)}|(y_{i}^{\delta }(x)-t)|\leq K_{1}\delta $
and that there is $K_{2}$ such that $\forall t\ \int_{\min
(T_{0}(t),T_{\delta }(t))}^{\max (T_{0}(t),T_{\delta }(t))}|(y_{i}^{\delta
}(x)-t)|dx\leq K_{2}\delta ^{2}$. \ Thus 
\begin{equation}
\int_{S^{1}}\left|\int_{y_{i}^{0}(x)}^{y_{i}^{\delta }(x)}w^{\prime \prime
}(t)(y_{i}^{\delta }(x)-t)dt\right|dx \le K_2\delta^2\int_{S^1} |w''(t)|\ dt\leq K_{2}\delta ^{2}\|w\|_{W^{2,1}}.
\label{unifw31}
\end{equation}

Hence inserting the estimate from (\ref{unifw31}) into (\ref{OL1delta2}) we obtain%
\begin{equation*}
w(y_{i}^{\delta })=w(y_{i}^{0})-\delta w^{\prime }(y_{i}^{0})\frac{\dot{T}%
(y_{i}^{0})}{T_{0}^{\prime }(y_{i}^{0})}+O_{L^{1}}(\delta ^{2})
\end{equation*}%
where similarly as before  we write $g_{\delta }=O_{L^{1}}(\delta ^{2})$ 
when 
 $\sup_{\delta \in \lbrack 0,\overline{\delta })}\frac{||g_{\delta||_{L^1} }}{
\delta ^{2}}<+\infty $. \ We remark that the above
$O_{L^{1}}(\delta ^{2})$
term depends only on $w$ through 
 its $W^{2,1}$ norm, hence it is uniform as $w$ ranges in the unit
ball of such Sobolev space. 

Continuing, we write 
\begin{eqnarray}
(II)=\frac{1}{\delta }\sum_{i=1}^{d}\frac{w(y_{i}^{\delta })-w(y_{i}^{0})}{%
T_{0}^{\prime }(y_{i}^{\delta })} &=&-\frac{1}{\delta }\sum_{i=1}^{d}\frac{%
\delta w^{\prime }(y_{i}^{0})\frac{\dot{T}(y_{i}^{0})}{T_{0}^{\prime
}(y_{i}^{0})}+O_{L^{1}}(\delta ^{2})}{T_{0}^{\prime }(y_{i}^{\delta })} \\
&=&\label{p84}-\sum_{i=1}^{d}\frac{\dot{T}(y_{i}^{0})w^{\prime }(y_{i}^{0})}{%
T_{0}^{\prime }(y_{i}^{0})T_{0}^{\prime }(y_{i}^{\delta })}+\frac{%
O_{L^{1}}(\delta ^{2})}{\delta }.
\end{eqnarray}%
Now we recall that by Lemma \ref{tec},
$y_{i}^{\delta }=y_{i}^{0}+\delta \left( -\frac{\dot{T}(y_{i}^{0})}{%
T_{0}^{\prime }(y_{i}^{0})}\right) +O_{C^{1}}(\delta ^{2})
$,  furthermore $\frac{\dot{T}(\cdot)}{%
T_{0}^{\prime }(\cdot)T_{0}^{\prime }(\cdot)} $ is  Lipschitz, 
thus by 
  \eqref{claim0} we get that for all $1\leq i\leq d$ as $\delta \to 0$
\begin{equation}\label{e85-2}
||   \frac{\dot{T}(y_{i}^{0})}
{T_{0}^{\prime }(y_{i}^{0})T_{0}^{\prime }(y_{i}^{\delta })} -\frac{\dot{T}(y_{i}^{0})}{%
T_{0}^{\prime }(y_{i}^{0})T_{0}^{\prime }(y_{i}^{0})} ||_{C^0}=O(\delta).
\end{equation}
Since $w$ is in $W^{2,1}$, by the Sobolev Embedding Theorem $w'$ is continuous and there is $C\geq0$ such that  $||w'||_{C^0}\leq C ||w||_{W^{2,1}}.$
Thus there is a $C_2\geq0$ such that for sufficiently small $\delta$, for each $1\leq i\leq d$, one has
\begin{equation}\label{e85-1}
\sup_{w\in B_{W^{2,1}}(0,1)}||   \frac{\dot{T}(y_{i}^{0})w^{\prime }(y_{i}^{0})}
{T_{0}^{\prime }(y_{i}^{0})T_{0}^{\prime }(y_{i}^{\delta })} -\frac{\dot{T}(y_{i}^{0})w^{\prime }(y_{i}^{0})}{%
T_{0}^{\prime }(y_{i}^{0})T_{0}^{\prime }(y_{i}^{0})} ||_{C^0}=C_2\delta,
\end{equation}
and then
\begin{equation}\label{e85}
\sup_{w\in B_{W^{2,1}}(0,1)}||   \sum_{i=1}^{d}\frac{\dot{T}(y_{i}^{0})w^{\prime }(y_{i}^{0})}{%
T_{0}^{\prime }(y_{i}^{0})T_{0}^{\prime }(y_{i}^{\delta })} -\sum_{i=1}^{d}\frac{\dot{T}(y_{i}^{0})w^{\prime }(y_{i}^{0})}{%
T_{0}^{\prime }(y_{i}^{0})T_{0}^{\prime }(y_{i}^{0})} ||_{C^0}=dC_2\delta.
\end{equation}
By \eqref{p84} and \eqref{e85}  we get
\begin{equation*}
\sup_{w\in B_{W^{2,1}}(0,1)}||\frac{1}{\delta }\sum_{i=1}^{d}\frac{w(y_{i}^{\delta })-w(y_{i}^{0})}{%
T_{0}^{\prime }(y_{i}^{\delta })}+L_{0}\left( \frac{\dot{T}w^{\prime }}{%
T_{0}^{\prime }}\right) ||_{L^{1}}=O(\delta )
\end{equation*}
 uniformly as $w$ ranges in the unit ball of 
 ${W^{2,1}}$.

\emph{Term (III):} Finally, for the third term $(III)$ by Lemma \ref{tec} we can write
\begin{equation*}
T_{0}^{\prime }(y_{i}^{\delta })=T_{0}^{\prime }(y_{i}^{0})+T_{0}^{\prime
\prime }(y_{i}^{0})\left( -\frac{\dot{T} (y_{i}^{0})}{T_{0}^{\prime
}(y_{i}^{0})}\right) \delta +O_{C^{1}}(\delta^2).
\end{equation*}%

Again using Lemma \ref{tec} we get 
\begin{eqnarray*}
(III) &=&\frac{1}{\delta }\left( \sum_{i=1}^{d}w(y_{i}^{0})\left( \frac{1}{%
T_{0}^{\prime }(y_{i}^{\delta })}-\frac{1}{T_{0}^{\prime }(y_{i}^{0})}%
\right) \right) \\
&=&\frac{1}{\delta }\left( \sum_{i=1}^{d}w(y_{i}^{0})\left( \frac{%
T_{0}^{\prime }(y_{i}^{0})-T_{0}^{\prime }(y_{i}^{\delta })}{T_{0}^{\prime
}(y_{i}^{\delta })T_{0}^{\prime }(y_{i}^{0})}\right) \right) \\
&=&\frac{1}{\delta }\left( \sum_{i=1}^{d}w(y_{i}^{0})\left( \frac{%
T_{0}^{\prime }(y_{i}^{0})-\left( T_{0}^{\prime }(y_{i}^{0})+T_{0}^{\prime
\prime }(y_{i}^{0})\left( -\frac{\dot{T} (y_{i}^{0})}{T_{0}^{\prime
}(y_{i}^{0})}\right) \delta +O_{C^{1}}(\delta^2)\right) }{T_{0}^{\prime
}(y_{i}^{\delta })T_{0}^{\prime }(y_{i}^{0})}\right) \right) \\
&=&\left( \sum_{i=1}^{d}w(y_{i}^{0})\left( \frac{\dot{T}
(y_{i}^{0})T_{0}^{\prime \prime }(y_{i}^{0})}{T_{0}^{\prime
2}(y_{i}^{0})T_{0}^{\prime }(y_{i}^{\delta })}\right) \ \right)
+\frac{O_{C^{1}}(\delta^2)}{\delta}.
\end{eqnarray*}%
Finally, by Lemma \ref{tec} and $(\ref{claim00})$, similarly to what was done for \eqref{termIlimit},
 we get 
\begin{equation*}
\lim_{\delta \rightarrow 0}\frac{1}{\delta }\left(
\sum_{i=1}^{d}w(y_{i}^{0})\left( \frac{1}{T_{0}^{\prime }(y_{i}^{\delta })}-%
\frac{1}{T_{0}^{\prime }(y_{i}^{0})}\right) \right) =L_{0}\left( \frac{%
\dot{T} T_{0}^{\prime \prime }}{T_{0}^{\prime ^{2}}}w\right)
\end{equation*}%
in $C^{1}$. 
This proves that $(III)$ converges in the $C^{1}$ topology to the third summand of \eqref{formulaz}.

Furthermore, again with a reasoning similar to what done for \eqref{p84} (see the lines from \eqref{p84} 
 to \eqref{e85}) 
by Lemma \ref{tec} and \eqref{claim0}, and using arguments similar to those regarding uniformity of constants for Term (I) after equation \eqref{tocompare}, we obtain%
\begin{equation*}
\left\|\frac{1}{\delta }\left( \sum_{i=1}^{d}w(y_{i}^{0}(x))\left( \frac{1}{%
T_{0}^{\prime }(y_{i}^{\delta }(x))}-\frac{1}{T_{0}^{\prime }(y_{i}^{0}(x))}%
\right) \right) -L_{0}\left( \frac{\dot{T} T_{0}^{\prime \prime }}{%
T_{0}^{\prime 2}}w\right) (x)\right\|_{C^{0}}=O(\delta ),
\end{equation*}%
uniformly for $w\in B_{W^{2,1}}(0,1)$.
\end{proof}
\subsection{Proof of Proposition \ref{Prop:LRmain}}\label{endofapp}

In the following Lemma, we establish more precise regularity for the eigenvectors of the simple eigenvalues of the transfer operator.
\begin{lemma}\label{regv0}
Let~$L_0:W^{1,1}\rightarrow W^{1,1}$ be the transfer operator associated
to an expanding map $T_0\in C^{4}(S^{1})$. Let  $\lambda_{0}$ be a simple
eigenvalue of $L_0$ such that $|\lambda_0 |>\frac{1}{\inf (T_0^{\prime })}$,
let  $v_{0}\in W^{1,1}(S^{1})$ be the normalized eigenvector associated to $ \lambda_{0}$, as in (III) of Proposition \ref{Prop:LRmain}, then $v_{0}\in
W^{3,1}(S^{1})$.
\end{lemma}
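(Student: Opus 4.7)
The cleanest route is to invoke Lemma~A.3 of \cite{Ba3}, which is the same spectral comparison tool already used in the proof of Proposition~\ref{eigdiff}. That lemma says: if $L:B_s\to B_s$ is continuous, preserves a dense continuously embedded subspace $B_{ss}$ on which it is also continuous, and the essential spectral radius of $L$ on both $B_s$ and $B_{ss}$ is bounded by some $\rho<1$, then the simple eigenvalues of $L$ lying in $\{z\in\mathbb{C}:|z|>\rho\}$ coincide for the two actions, as do their eigenspaces, and both eigenspaces lie in $B_{ss}$. My plan is to apply this with $B_s=W^{1,1}(S^1)$ and $B_{ss}=W^{3,1}(S^1)$, and read off that the normalized eigenvector $v_0$ must lie in $W^{3,1}$.

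The hypotheses of Lemma~A.3 will be verified as follows. Density and continuous embedding of $W^{3,1}$ in $W^{1,1}$ are standard. Since $T_0\in C^4(S^1)$, Lemma~\ref{Lemsu} (applied with $k=4$) gives that $L_0$ is a bounded operator on $W^{i,1}(S^1)$ for each $i=1,2,3$, together with the uniform Lasota--Yorke inequalities
\begin{equation*}
\|L_0^n f\|_{W^{i,1}}\leq \alpha^{ni}\|f\|_{W^{i,1}} + B_i\|f\|_{W^{i-1,1}},\qquad \alpha=1/\inf_{x\in S^1}|T_0'(x)|<1.
\end{equation*}
Combining these with the compact embeddings $W^{i,1}\hookrightarrow W^{i-1,1}$ (Rellich--Kondrachov) and the Hennion-type argument already invoked in the paper (see Lemma~2.2 of \cite{BGK}), the essential spectral radius of $L_0$ acting on $W^{i,1}$ is bounded by $\alpha^i$, hence by $\rho:=\alpha<1$ for each $i=1,2,3$. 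The standing hypothesis $|\lambda_0|>1/\inf|T_0'|=\alpha=\rho$ then places $\lambda_0$ outside the essential spectrum on all three spaces.

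Applying Lemma~A.3 of \cite{Ba3} to the pair $(W^{1,1},W^{3,1})$, the simple eigenvalue $\lambda_0$ of $L_0:W^{1,1}\to W^{1,1}$ is also a simple eigenvalue of $L_0:W^{3,1}\to W^{3,1}$, and the two one-dimensional eigenspaces coincide and are contained in $W^{3,1}$. Consequently $v_0$, being (up to normalization) the unique generator of the $\lambda_0$-eigenspace in $W^{1,1}$, lies in $W^{3,1}(S^1)$, which is the desired conclusion.

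\textbf{The main obstacle.} The conceptual point to beware of is that a naive bootstrap via the eigenvalue equation $v_0=\lambda_0^{-1}L_0 v_0$ does not work: the explicit formula for $L_0$ shows that $L_0$ does not automatically gain Sobolev derivatives (both $f$ and $L_0 f$ generically lie in the same $W^{k,1}$), so one cannot improve the regularity of $v_0$ by simply iterating the fixed-point relation. The gain in regularity is a genuinely spectral phenomenon, extracted from the Lasota--Yorke inequality via the Keller--Liverani/Ionescu-Tulcea--Marinescu machinery packaged in Lemma~A.3 of \cite{Ba3}; this is what makes the $C^4$ hypothesis on $T_0$ (rather than only $C^2$) enter at this step.
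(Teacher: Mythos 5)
Your proof is correct and uses the same key tool as the paper: Lemma~A.3 of \cite{Ba3}, with its hypotheses verified via the Lasota--Yorke inequalities of Lemma~\ref{Lemsu} and the Rellich--Kondrachov compact embeddings. The only (cosmetic) difference is that the paper applies the lemma twice, bootstrapping through the intermediate pair $(W^{1,1},W^{2,1})$ and then $(W^{2,1},W^{3,1})$, whereas you apply it once to the pair $(W^{1,1},W^{3,1})$; both are valid since the essential spectral radius of $L_0$ on $W^{3,1}$ is bounded by $\alpha^3\le\alpha<|\lambda_0|$.
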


\begin{proof}
Lemma A.3 of \cite{Ba3} 
(whose statement is recalled in the proof of Proposition \ref{eigdiff})
 can be applied to the transfer operator  $L_0:W^{1,1}\rightarrow
W^{1,1},$ obtaining that $v_{0}\in W^{2,1}$ as follows. 

By Lemma \ref{Lemsu} we get that $L_0$ satisfies Lasota--Yorke inequalities
both with $W^{1,1}$ and  $L^{1}$ and with $W^{2,1}$ and $W^{1,1}$
as a weak and strong spaces.  This implies that $L_0:W^{1,1}\rightarrow
W^{1,1}$ is continuous and preserves $W^{2,1}$. Furthermore the restriction of $L_0$
to $W^{2,1}$ is also continuous as an operator on $W^{2,1}.$  It is also well known that by   the Lasota--Yorke inequalities (Lemma \ref{Lemsu})
and the compact immersion between the strong and the weak spaces provided by the Rellich--Kondracov theorem, 
 the essential spectral radii of $L_0:W^{1,1}
\rightarrow W^{1,1}$ and $L_0:W^{2,1}\rightarrow W^{2,1}$ are smaller than $\frac{1}{\inf (T_0^{\prime})}$   (see e.g. Lemma 2.2 of [6]).
Thus Lemma A.3 of \cite{Ba3} can be applied, giving that the generator $v_0$ of
the one dimensional eigenspace associated to $\lambda _{0}$ is contained in $%
W^{2,1}$.
Now, since for a $C^4$ map, Lemma \ref{Lemsu} also gives a $W^{3,1}$, $W^{2,1}$ Lasota Yorke inequality, we can repeat the same reasoning using $W^{3,1}$, $W^{2,1}$ as a strong and weak space, obtaining that $v_0\in W^{3,1}$.

\end{proof}

\begin{proof}[Proof of Proposition \ref{Prop:LRmain}] 
\ 
{\color{red}}
Below we set $B_{ss}, B_s, B_w$ to be $W^{2,1}$, $W^{1,1}$, $L^{1}$,  respectively.

\textbf{Proof of Part I:}
The linear response formula for the invariant density ($\ref{linr}$) will follow as a direct
consequence of Theorem \ref{LR} and Proposition \ref{mainprop}.

To show that Theorem \ref{LR} can be applied we show that its assumptions (\ref{resex})--(\ref{reslim}) are verified for our maps and perturbations. 
Equation (\ref{resex}) will follow  from Proposition \ref{mainprop}.
Indeed, by (A0), $T_0\in C^4$, hence by Lemma \ref{regv0}  the normalized eigenvector $v_0$ of  $\lambda_0$ is contained in $W^{3,1}$ and hence is contained in $C^2.$

Furthermore, Propositions \ref{propora}  and \ref{resstab} provide  the existence and stability of the resolvent as required by the assumptions \eqref{resbound} and \eqref{reslim}, respectively, of Theorem \ref{LR}.

\textbf{Proof of Part II:}

The linear response formula $(\ref{lineig})$ for  isolated eigenvalues will be
established by applying Proposition \ref{lineig1}. Again, the necessary properties of the operator in this case are established in Proposition \ref{mainprop}.
To apply  Proposition \ref{lineig1} we  verify that the assumptions (\ref{resex}) and (GL1)--(GL7) hold for our maps and
perturbations. 
Assumption (\ref{resex}) has been established above in the proof of Part I.
The assumptions (GL1), (GL2)\ and (GL3) are verifed
by the Lasota--Yorke inequalities established in Lemma \ref{Lemsu}. 
The small perturbation assumptions needed at (GL4)  are established in Proposition\ \ref{prop14}.
Next we verify that
(GL5)--(GL7) are satisfied for perturbations of expanding maps satisfying (A0), (A1) and (A2). 

(GL5): We consider the derivative operator $\dot{L}$ as defined in \eqref{der}.  Since we consider the stronger and weaker spaces
 $B_{ss}, B_s, B_w$ to be {\color{red}}$W^{2,1}$, $W^{1,1}$, $L^{1}$, from \eqref{der} we directly verify that $\dot{L}:W^{1,1} \to L^1$ and  $\dot{L}:{W^{3,1}}\rightarrow W^{1,1}$ are bounded operators.
 
(GL6): We must show that
 $||L_{\delta }-L_{0}-\delta \dot{L}||_{W^{2,1}\rightarrow
L^1}\leq C\delta ^{2}$. This follows by multiplying \eqref{GL6} by $\delta$.

(GL7): By the Rellich-Kondracov theorems there are compact immersions between the spaces $W^{2,1}, W^{1,1}$, and $L^1$ and hence the assumption (GL7) is satisfied.

\end{proof}

\noindent \textbf{Acknowledgments.} GF thanks the University of Pisa's Department of Mathematics for its generous hospitality during a visit that initiated this research. 
GF is partially supported by an Australian Research Council Discovery Project. SG is partially supported by the
research project ``Stochastic properties of dynamical systems" (PRIN 2022NTKXCX) of the Italian Ministry of Education and Research.
\newline


\begin{thebibliography}{99}
\bibitem{ADF} F. Antown, D. Dragicevic, and G. Froyland. \emph{Optimal
linear responses for Markov chains andstochastically perturbed dynamical
systems}. J. Stat. Phys., 170(6): 1051--1087, (2018).

\bibitem{AFG} F. Antown, G. Froyland, and S. Galatolo. \emph{Optimal linear
response for Markov Hilbert-Schmidt integral operators and stochastic
dynamical systems.} J. Nonlinear Sci 32, 79 (2022). https://doi.org/10.1007/s00332-022-09839-0 
\bibitem{AFGTV}
J. Atnip, G. Froyland, C. Gonzalez-Tokman, and S. Vaienti. \emph{Perturbation formulae for quenched random dynamics with applications to open systems and extreme value theory}, (2022). \verb"https://arxiv.org/abs/2206.02471".

\bibitem{Ba2} V. Baladi, Linear response, or else, \emph{\ ICM proceedings
Seoul 2014 ,} vol III, pp525-545

\bibitem{Ba3}
V. Baladi. \emph{Dynamical Zeta Functions and Dynamical Determinants for Hyperbolic Maps
A Functional Approach}
A Series of Modern Surveys in Mathematics (MATHE3, volume 68), (2018)

\bibitem{BT} V. Baladi and M. Tsujii. \emph{Dynamical determinants and spectrum for hyperbolic diffeomorphisms.} Contemp. Math. 469, pp. 29--68,  (2008) https://doi.org/1010.1090/conm/469/09160. 



\bibitem{BGK} J.B. Bardet, S. Gouezel, and G. Keller. \emph{Limit theorems for
coupled interval maps} Stochastics and Dynamics, Vol. 7, No. 1  17--36  (2007).


\bibitem{LB}  T. Bódai,  V. Lucarini, and F. Lunkeit.  \emph{ Can we use linear response theory to assess geoengineering strategies?}
Chaos 30, 023124 (2020); https://doi.org/10.1063/1.5122255


\bibitem{borwein}
J. Borwein and  R. Goebel. \emph{Notions of Relative Interior in Banach Spaces.} {Journal of Mathematical Sciences}, {115}, 4, (2003). 


\bibitem{brezis} H. Brezis. \emph{Functional Analysis, Sobolev Spaces and Partial Differential Equations}, Springer, Universitext, (2011).
https://doi.org/10.1007/978-0-387-70914-7

\bibitem{CF} H. Crimmins and G. Froyland. \emph{Fourier approximation of the statistical properties of Anosov maps on tori} Nonlinearity 11, 33:6244-–6296, (2020).

\bibitem{FKS} G. Froyland, P. Koltai, and M. Stahn. \emph{Computation and optimal perturbation of finite-time coherent sets for aperiodic flows without trajectory integration } SIAM Journal on Applied Dynamical Systems, 19(3):1659--1700, (2020).

\bibitem{FS}
G. Froyland and N. Santitissadeekorn.
\emph{Optimal mixing enhancement},
 SIAM Journal on Applied Mathematics, 77(4):1444--1470, (2017).

\bibitem{Gdisp} S. Galatolo. \emph{Statistical properties of dynamics.
Introduction to the functional analytic approach \ } ArXiv:1510.02615 (2015)


\bibitem{GG} S. Galatolo and P. Giulietti. \emph{ A linear response for dynamical systems with additive noise.} Nonlinearity 32(6), 2269–2301 (2019)


\bibitem{GP} S. Galatolo and  M. Pollicott. \emph{Controlling the statistical properties of expanding maps.} Nonlinearity 30, 7, 2737 (2017). 

\bibitem{GS} S. Galatolo and J. Sedro. \emph{Quadratic response of random and
deterministic dynamical systems }Chaos 30, 023113 (2020);
https://doi.org/10.1063/1.5122658

\bibitem{GL06} S. Gouezel and  C. Liverani {\em Banach spaces adapted to Anosov systems} Ergodic Theory and Dynamical Systems, 26, 1, 189--217, (2006).

\bibitem{HH} H. Hennion and L. Herv\'e. {\em Limit Theorems for Markov Chains and Stochastic Properties of Dynamical Systems by Quasi-Compactness}  Lecture Notes on Mathematics, Springer  1766 (2001).

\bibitem{K} G. Keller, \emph{ Rare events, exponential hitting times and extremal indices via spectral perturbation} Dynamical Systems Volume 27, 1, 11-27 (2012).

\bibitem{KL} G. Keller and C. Liverani \emph{Stability of the Spectrum for
Transfer Operators }Ann. Scuola Norm. Sup. Pisa Cl. Sci. (4) \ Vol. XXVIII,
 pp. 141-152, (1999).

\bibitem{KL2} G. Keller and C. Liverani  \emph{Rare events, escape rates and quasistationarity: Some exact formulae.} J. Stat. Phys., 135:519–534, (2009).




\bibitem{KR} G. Keller and H.H. Rugh. \textit{Eigenfunctions for smooth expanding circle maps.} Nonlinearity 17:1723–1730  (2004).

\bibitem{Kl} B.  Kloeckner \emph{The linear request problem.} Proc. AMS 146, 7:2953-2962 (2018).

\bibitem{K19} Kloeckner B. R.  \textit{Effective perturbation theory for linear operators.} Journal of Operator Theory 81, pp.  175–194 (2019). 

\bibitem{KLP19} P.Koltai, H.C. Lie and M. Plonka. \textit{Fréchet differentiable drift dependence of
Perron–Frobenius and Koopman operators
for non-deterministic dynamics.}  Nonlinearity 32,  pp. 4232–4257  (2019).

\bibitem{KS} K. Krzyżewski and W. Szlenk
{\em On invariant measures for expanding differentiable mappings} Studia Math. V. 33, I. 1, pp. 83-92  (1969).


\bibitem{L2} C. Liverani. \emph{\ Invariant measures and their properties. A
functional analytic point of view}, Dynamical Systems. Part II: Topological
Geometrical and Ergodic Properties of Dynamics. Centro di Ricerca Matematica
\textquotedblleft Ennio De Giorgi\textquotedblright : Proceedings. Published
by the Scuola Normale Superiore in Pisa (2004).


\bibitem{MK} R. MacKay. \emph{Management of complex dynamical systems.}
Nonlinearity 31, 2,  R52 (2018). 



\bibitem{R55} Rosenbloom  P.  \textit{Perturbation of linear operators in Banach spaces.} Arch. Math.  6 89–101 (1955).

\bibitem{R} D. Ruelle. {\em Differentiation of SRB states}, {Comm. Math.
Phys.} 187. pp. 227--241  (1997).

\bibitem{R2} D. Ruelle. {\em
The thermodynamic formalism for expanding maps.} 
Comm. Math. Phys. 125(2):239-262 (1989). 

\bibitem{smith} D.R. Smith. \emph{Variational methods in optimization.} Prentice-Hall, Englewood Cliffs, (1974).

\bibitem{CW}
C. Wormell. \emph{Spectral Galerkin methods for transfer operators in uniformly expanding dynamics}.
Numerische Mathematik, 142, pp. 421--463 (2019).

\end{thebibliography}
\end{document}